\documentclass[11pt, reqno]{article}
\usepackage{amsmath,amssymb,amsfonts,amsthm}
\usepackage{color}

\usepackage[colorlinks=true,linkcolor=blue,citecolor=blue,urlcolor=blue,pdfborder={0 0 0}]{hyperref}
\usepackage{cleveref}

\usepackage{caption}
\usepackage{thmtools}

\usepackage{mathrsfs}

\crefname{theorem}{Theorem}{Theorems}
\crefname{thm}{Theorem}{Theorems}
\crefname{lemma}{Lemma}{Lemmas}
\crefname{lem}{Lemma}{Lemmas}
\crefname{remark}{Remark}{Remarks}
\crefname{prop}{Proposition}{Propositions}
\crefname{defn}{Definition}{Definitions}
\crefname{corollary}{Corollary}{Corollaries}
\crefname{conjecture}{Conjecture}{Conjectures}
\crefname{question}{Question}{Questions}
\crefname{chapter}{Chapter}{Chapters}
\crefname{section}{Section}{Sections}
\crefname{figure}{Figure}{Figures}

\theoremstyle{plain}
\newtheorem{thm}{Theorem}[section]

\newtheorem{lemma}[thm]{Lemma}
\newtheorem{theorem}[thm]{Theorem}

\newtheorem{prop}[thm]{Proposition}

\newtheorem{question}[thm]{Question}
\theoremstyle{definition}

\theoremstyle{remark}
\newtheorem{remark}[thm]{Remark}

\numberwithin{equation}{section}

%% Special
\renewcommand{\P}{\mathbb P}

\newcommand{\R}{\mathbb R}
\newcommand{\Z}{\mathbb Z}

%%Mathcal
% \newcommand{\c}{\mathcal}

\newcommand{\cF}{\mathcal F}

\newcommand{\cO}{\mathcal O}

%% Mathscr

\newcommand{\sA}{\mathscr A}

\newcommand{\sG}{\mathscr G}

\newcommand{\sR}{\mathscr R}

%% Mathbb

%% Mathfrak

%% USF Macros

%% Unimodular Macros

\usepackage[margin=0.9in]{geometry}
\usepackage{extarrows}
\usepackage{titling}
\usepackage{commath}
\usepackage{mathtools}
\usepackage{titlesec}
\newcommand{\eps}{\varepsilon}
\usepackage{bbm}
\usepackage{setspace}
\setstretch{1}
\usepackage{enumitem}

\usepackage[textsize=tiny]{todonotes}

\usepackage{cite}

\newcommand{\Aut}{\operatorname{Aut}}
\newcommand{\Gr}{\operatorname{Gr}}

\newcommand{\bP}{\mathbf P}
\newcommand{\bE}{\mathbf E}

\newcommand{\opleq}{\preccurlyeq}
% Frac with custom spacing between num/denom and bar

% \newcommand{\myasymmetricfrac}[4][0pt]{\genfrac{}{}{}{}{\raisebox{#1}{$#3$}}{\raisebox{-#2}{$#4$}}}

%\def\Z{Z_t^{(1)}}

\def\P{\mathbb{P}}
%\def\E{\mathbb{E}}
%\def\eps{\varepsilon}

%\DeclareMathOperator*{\argmin}{arg\,min}

%%%Jonathan commands
% \newcommand{\set}[1]{\{#1\}}

% \newcommand{\pd}{\partial}

%\newcommand{\eps}{\epsilon}
%Probability
%\def\E{\mathop{\mathbb E}}

%\newcommand{ \mixrwG}{ t_{\mathrm{mix}}^{\mathrm{RW}(G,1)} 

%Probability

%\newcommand{\Var}{\mathrm{Var}}

\DeclareMathSymbol{\leqslant}{\mathalpha}{AMSa}{"36} % nicer `smaller or equal'
\DeclareMathSymbol{\geqslant}{\mathalpha}{AMSa}{"3E} % nicer `larger or equal'
\DeclareMathSymbol{\eset}{\mathalpha}{AMSb}{"3F}     % nicer `emptyset'
                   % redef. of < or =
                   % redef. of > or =
             % a straight d for differentials
       % \sum-like symbol for union

%\DeclareMathOperator*{\Var}{\mathrm Var}      % \sum-like symbol for inter

 % max with 2 lines
 % min with 2 lines
 % sup with 2 lines
 % inf with 2 lines
 % sum with 2 lines
 % sum with 3 lines
 % union with 2 lines
 % inter with 2 lines
     % \int with 2 lines
     % \lim with 2 lines
 % liminf 2 lines
 % limsup 2 lines
     % product 2 lines
 % prod. 3 lines

%%%%

\renewcommand{\epsilon}{\varepsilon}

%\newcommand{\RR}{{\sf I}\hspace{-.5mm}{\sf R}}

%sortcuts from the other cutoff via hitting times paper

%\newcommand{\En}{\ensuremath{\mathbb{E}(\zetan)}}
%\newcommand{\Varcn}{\ensuremath{\sigma^2(\zetacn)}}

%\newcommand{\znone}{\ensuremath{\zeta_n^1}}

%Algebraic Structures

% Notations

% \newcommand{\past}[1]{\text{past}\left(#1\right)}

\pretitle{\begin{flushleft}\Large}
\posttitle{\par\end{flushleft}\vskip 0.5em
}
\preauthor{\begin{flushleft}}
\postauthor{
\\
\vspace{0.5em}
\footnotesize{The Division of Physics, Mathematics and Astronomy, California Institute of Technology\\ Email: 
\href{mailto:t.hutchcroft@caltech.edu}{t.hutchcroft@caltech.edu}}
\end{flushleft}}
\predate{\begin{flushleft}}
\postdate{\par\end{flushleft}}
\title{\bf Slightly supercritical percolation on nonamenable graphs II: Growth and isoperimetry of infinite clusters
}

\renewenvironment{abstract}
 {\par\noindent\textbf{\abstractname.}\ \ignorespaces}
 {\par\medskip}

\titleformat{\section}
  {\normalfont\large \bf}{\thesection}{0.4em}{}

\author{{\bf Tom Hutchcroft}}
% \author{Perla Sousi}
\begin{document}

\date{\small{\today}}

\maketitle

\setstretch{1.1}

% \vspace{-1.35em}
% \noindent \emph{Dedicated to Harry Kesten, November 19, 1931 – March 29, 2019}
% \vspace{1.35em}

\begin{abstract}
We study the growth and isoperimetry of infinite clusters in slightly supercritical  Bernoulli bond percolation on transitive nonamenable graphs under the \emph{$L^2$ boundedness condition} ($p_c<p_{2\to 2}$). Surprisingly, we find that the volume growth of infinite clusters is always purely exponential (that is, the subexponential corrections to growth are bounded) in the regime $p_c<p<p_{2\to 2}$, even when the ambient graph has unbounded corrections to exponential growth. For $p$ slightly larger than $p_c$, we establish the precise estimates
\begin{align*}
\mathbf{E}_p \left[ \# B_\mathrm{int}(v,r) \right] &\asymp \left(r  \wedge \frac{1}{p-p_c} \right)^{\phantom{2}}  e^{\gamma_\mathrm{int}(p) r}
\\
\mathbf{E}_p \left[ \# B_\mathrm{int}(v,r) \mid v \leftrightarrow \infty \right] &\asymp \left(r  \wedge \frac{1}{p-p_c} \right)^2  e^{\gamma_\mathrm{int}(p) r}
\end{align*}
for every $v\in V$, $r \geq 0$, and $p_c < p \leq p_c+\delta$, where the growth rate $\gamma_\mathrm{int}(p) = \lim \frac{1}{r} \log \mathbf{E}_p\#B(v,r)$ satisfies  $\gamma_\mathrm{int}(p) \asymp p-p_c$.
We also 
  prove a percolation analogue of the Kesten-Stigum theorem that holds in the entire supercritical regime and states that the quenched and annealed exponential growth rates of an infinite cluster always coincide. We apply these results together with those of the first paper in this series to prove that the anchored Cheeger constant of every infinite cluster $K$ satisfies \[ \frac{(p-p_c)^2}{\log[1/(p-p_c)]} \preceq \Phi^*(K) \preceq (p-p_c)^2
\]
almost surely for every $p_c<p\leq1$.
\end{abstract}

% \newgeometry{margin=1.2in}

% \newpage 

% \tableofcontents

% \newgeometry{margin=1in}

% \newpage

\section{Introduction}\label{sec:intro}

This paper is the second in a series of three papers analyzing \emph{slightly supercritical percolation} on nonamenable graphs transitive graphs, where the retention parameter $p$ approaches its critical value $p_c$ from above. This regime is typically very difficult to study rigorously, with many of its conjectured features remaining unproven  even for high-dimensional Euclidean lattices where most other regimes are well-understood \cite{hara1994mean,MR1068308,heydenreich2015progress}; see the first paper in this series \cite{hutchcroft2020slightly} for a detailed introduction to the topic.

\medskip
We work primarily under the \emph{$L^2$ boundedness condition} $p_c<p_{2\to 2}$, where $p_{2\to 2}$ 
is the supremal value of $p$ for which the infinite matrix $T_p\in[0,1]^{V\times V}$ defined by $T_p(u,v)=\bP_p(u\leftrightarrow v)$ defines a bounded operator on $\ell^2(V)$. 
This condition, which was introduced in \cite{1804.10191} and developed further in \cite{hutchcroft20192}, is conjectured to hold for every transitive nonamenable graph and proven to hold for various large classes of graphs including highly nonamenable graphs \cite{hutchcroft20192,MR1756965,MR3005730,MR1833805}, Gromov hyperbolic graphs \cite{1804.10191}, and graphs admitting a transitive nonunimodular group of automorphisms \cite{Hutchcroftnonunimodularperc}. In the first paper in this series we proved sharp estimates on the distribution of \emph{finite} clusters near $p_c$ under the $L^2$ boundedness condition. In the present paper we apply these results to analyze the \emph{growth and isoperimetry} of \emph{infinite} clusters. In a forthcoming third paper we will use these results to study the behaviour of \emph{random walk} on infinite slightly supercritical clusters. The present paper can be read independently of \cite{hutchcroft2020slightly} provided that one is willing to take the main results of that paper as a black box. 
% We state our results in \cref{intro:results} following a motivational discussion in \cref{intro:motivation}, which also contains an overview of results and open problems in the Euclidean setting.

\medskip

% The goal of this paper is to understand the geometric and spectral properties of infinite percolation clusters in the \emph{slightly supercritical} regime, where the retention parameter $p$ approaches its critical value $p_c$ from above. This regime is typically very difficult to study rigorously, with many of its conjectured features remaining unproven  even for high-dimensional Euclidean lattices where most other regimes are well-understood \cite{hara1994mean,MR1068308}. We make a significant first step towards the understanding of this regime by analyzing it in the simpler setting of \emph{nonamenable} transitive graphs satisfying the \emph{$L^2$ boundedness condition} $p_c<p_{2\to 2}$, for which we analyzed the distribution of finite slightly supercritical clusters in the first paper of this series \cite{hutchcroft2020slightly}.
% % Outside of the planar case, where slightly superciritcal and slightly subcritical percolation can be related by duality \cite{MR879034}, slightly supercritical percolation remains very poorly understood even for high-dimensional Euclidean lattices where most other aspects of critical and near-critical percolation are well understood following the breakthrough work of Hara and Slade \cite{hara1994mean}.
% We state our results in \cref{intro:results} following a motivational discussion in \cref{intro:motivation}, which also contains an overview of results and open problems in the Euclidean setting.

% \medskip

\textbf{Notation:} We write $\asymp$, $\succeq$, and $\preceq$  to denote equalities and inequalities that hold up to positive multiplicative constants depending only on the graph $G$. For example, ``$f(n) \asymp g(n)$ for every $n\geq 1$'' means that there exist positive constants $c$ and $C$ such that $cg(n) \leq f(n) \leq Cg(n)$  for every $n\geq 1$. We also use Landau's asymptotic notation similarly, so that $f(n)=\Theta(g(n))$  if and only if $f \asymp g$, and $f(n) \preceq g(n)$ if and only if $f(n) = O(g(n))$. Given a matrix $M$ indexed by a countable set $V$, we write $\|M\|_{2\to 2}=\sup\{\|Mf\|_2/\|f\|_2 :f$ a non-zero finitely supported function on $V\}$ for the norm of $M$ considered as an operator on $\ell^2(V)$, which is finite if and only if $M$ extends continuously to a bounded operator on $\ell^2(V)$. We write $\bP_p$ and $\mathbf{E}_p$ for probabilities and expectations taken with respect to the law of Bernoulli-$p$ bond percolation.

\subsection{Expected volume growth}

% \textbf{Volume growth.} 
 We begin by stating our results concerning the volume growth of slightly supercritical clusters. 
% In this section we study the volume growth of infinite clusters in the supercritical and slightly supercritical regimes. 
We will prove results of two kinds: precise estimates on the \emph{expected} volume of an intrinsic ball for $p$ close to $p_c$, and limit theorems stating that the almost sure volume growth is well-described by its expectation in various senses. This leads to a rather complete description of the volume growth of clusters for percolation with $p_c<p<p_{2\to 2}$, as well as some partial understanding of the remaining supercritical regime $p_{2\to 2} \leq p <1$.

\medskip

We begin with some relevant definitions. Let $G=(V,E)$ be a countable graph. For each $p\in [0,1]$ and $r\geq 0$ we define
\[
\Gr_p(r) := \sup_{v\in V} \bE_p\left[ \# B_\mathrm{int}(v,r)\right],
\]
where $B_\mathrm{int}(v,r)$ denotes the intrinsic ball of radius $r$ around $v$, i.e., the graph distance ball in the cluster of $v$. By a standard abuse of notation we write $B_\mathrm{int}(v,r)$ both for the set of vertices in the ball and the subgraph of the cluster induced by the ball, writing $\#B_\mathrm{int}(v,r)$ for the number of vertices that have intrinsic distance at most $r$ from $v$.
If $G$ has degrees bounded by $M$ then $\Gr_p(r) \leq M (M-1)^{r-1}$ for every $r\geq 1$, so that $\Gr_p(r)$ is finite for every $r\geq 0$.
It is a consequence of Reimer's inequality \cite[Lemma 3.4]{hutchcroft20192} that 
\begin{equation}
\label{eq:Gr_submult_strong}
\bE_p\left[ \# B_\mathrm{int}(v,r+\ell)\right] \leq \bE_p\left[ \# B_\mathrm{int}(v,r-1)\right] +  \bE_p\left[ \# \partial B_\mathrm{int}(v,r)\right] \Gr_p(\ell)
\end{equation}
for every $r,\ell \geq 0$, $p\in [0,1]$, and $v\in V$, and hence that
$\Gr_p(r)$ satisfies the submultiplicative-type inequality
\begin{equation}
\label{eq:Gr_submult}
\Gr_p(r+\ell) \leq \sup\left\{ \bE_p\left[ \# B_\mathrm{int}(v,r-1)\right] +  \bE_p\left[ \# \partial B_\mathrm{int}(v,r)\right] \Gr_p(\ell) :v\in V\right\} \leq \Gr_p(r)\Gr_p(\ell)
\end{equation}
for every $p\in [0,1]$ and $r,\ell \geq 0$. It follows by Fekete's lemma \cite[Appendix II]{grimmett2010percolation} that if $G$ has degrees bounded by $M$ then for each $p\in [0,1]$ there exists $\gamma_\mathrm{int}(p) \in [0,M-1]$ such that
\begin{equation}
\label{eq:gamma_def}
\gamma_\mathrm{int}(p) = \lim_{r\to\infty} \frac{1}{r}  \log \Gr_p(r) = \inf_{r\geq 1} \frac{1}{r}  \log \Gr_p(r).
\end{equation}
Note that 
$\gamma_\mathrm{int}(p)$ is an increasing function of $p$. When $p=1$ we have that $B_\mathrm{int}(v,r)=B(v,r)$ for every $r\geq 0$, so that $\gamma_\mathrm{int}(1)=\gamma(G)$ is simply the exponential growth rate of $G$.  It follows from \eqref{eq:Gr_submult} and \eqref{eq:gamma_def} that for each $p\in [0,1]$ there exists a non-negative, subadditive function $h_p:\{0,1,\ldots\}\to \R$ with $\lim_{r\to \infty}\frac{1}{r}h_p(r)=0$ such that
\begin{equation}
\label{eq:h_def}
 \Gr_p(r) = \exp\left[\gamma_\mathrm{int}(p)r +h_p(r)\right]
\end{equation}
for every $r\geq 0$.  We refer to the function $e^{h_p(r)}=e^{-\gamma_\mathrm{int}(p)r}\Gr_p(r)$ as the \textbf{subexponential correction to growth} for Bernoulli-$p$ percolation on $G$.

\medskip

Our first theorem states that infinite clusters have
% 
% The following two theorems state that infinite clusters 
have purely exponential growth between $p_c$ and $p_{2\to 2}$ in the sense that the subexponential corrections to growth are bounded. 
  % and give precise estimates to this effect in the slightly supercritical regime.

\begin{theorem}[Bounded subexponential corrections to growth below $p_{2\to2}$]
\label{thm:L2_subexponential}
 Let $G$ be a connected, locally finite, quasi-transitive graph, and let $p_c<p<p_{2\to 2}$. Then there exist positive constants $c_p$ and $C_p$ such that
\begin{equation}
\label{eq:L2_subexponential_annealed}
c_p e^{\gamma_\mathrm{int}(p) r} \leq \bE_p \left[ \# \partial B_\mathrm{int}(v,r) \right] \leq \bE_p \left[ \# B_\mathrm{int}(v,r) \right] \leq C_p e^{\gamma_\mathrm{int}(p) r}
\end{equation}
for every $v\in V$ and $r \geq 0$.
% Moreover, we have that 
% \begin{equation}
% \label{eq:L2_subexponential_quenched}
% 0< \liminf_{r\to \infty} e^{-\gamma_\mathrm{int}(p) r}  \# \partial B_\mathrm{int}(v,r)  \leq \limsup_{r\to \infty} e^{-\gamma_\mathrm{int}(p) r}  \# B_\mathrm{int}(v,r)  < \infty
% \end{equation}
% almost surely on the event that $v$ belongs to an infinite cluster for each $v\in V$.
\end{theorem}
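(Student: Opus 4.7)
The theorem decomposes into upper and lower bounds for both the ball and the sphere. The ball lower bound $\bE_p[\#B_\mathrm{int}(v,r)]\ge c_p e^{\gamma_\mathrm{int}(p)r}$ is essentially free: Fekete's lemma applied to the submultiplicative inequality \eqref{eq:Gr_submult} gives $\Gr_p(r)\ge \exp(\gamma_\mathrm{int}(p)r)$ for all $r$, and quasi-transitivity converts the supremum to a pointwise bound at the cost of an orbit-dependent constant. The sphere upper bound is trivial ($\partial B_\mathrm{int}\subseteq B_\mathrm{int}$). Assuming the ball upper bound $\Gr_p(r)\le C_p e^{\gamma_\mathrm{int}(p)r}$ and the auxiliary fact that $\gamma_\mathrm{int}(p)>0$ for $p>p_c$ (provable separately from the existence of an infinite cluster combined with the nonamenability of $G$), the sphere lower bound is extracted from \eqref{eq:Gr_submult_strong}: rearranging gives
\[
\bE_p[\#\partial B_\mathrm{int}(v,r)] \ \ge\ \frac{\bE_p[\#B_\mathrm{int}(v,r+s)] - \bE_p[\#B_\mathrm{int}(v,r-1)]}{\Gr_p(s)},
\]
and choosing $s=s_p$ so that $e^{\gamma_\mathrm{int}(p)s} \ge 2C_p$ forces the right-hand side to be $\succeq e^{\gamma_\mathrm{int}(p)r}$ uniformly in $v$. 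The entire theorem thus reduces to the ball upper bound.

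For the ball upper bound, my plan is to exploit the $L^2$ boundedness of $T_p$ via an operator-theoretic argument built around the tilted intrinsic two-point operator
\[
T_p^{(z)}(u,v) \ :=\ \bE_p\!\left[z^{d_\mathrm{int}(u,v)}\,\1_{u\leftrightarrow v}\right], \qquad z\ge 0,
\]
with the convention $z^\infty=0$, so that $T_p^{(1)}=T_p$. For $z\le 1$ we have $T_p^{(z)}\le T_p$ entrywise and hence $\|T_p^{(z)}\|_{2\to 2}\le \|T_p\|_{2\to 2}<\infty$ by hypothesis. The key technical step is to establish a Simon-Lieb-type entrywise inequality relating $T_p^{(z)}$ at tilt $z$ to products of tilted operators at smaller tilt, obtained by splitting shortest intrinsic paths at their midpoints and applying Reimer's inequality (the same mechanism that yields \eqref{eq:Gr_submult_strong}). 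Iterating propagates $\ell^2$-boundedness of $T_p^{(z)}$ from $z\le 1$ to $z$ in some maximal interval $[0,z_*]$ with $z_*>1$.

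Granted the identification $z_*=e^{\gamma_\mathrm{int}(p)}$, the ball upper bound follows by analyzing the scalar row-sum generating function
\[
\Phi_v(z) \ :=\ \sum_{u\in V}T_p^{(z)}(v,u) \ =\ \sum_{r\ge 0}z^r\,\bE_p[\#\partial B_\mathrm{int}(v,r)].
\]
A resolvent-plus-Schur-test manipulation using the $\ell^2$-boundedness of $T_p^{(z)}$ throughout $[0,z_*)$ gives a one-sided control $\Phi_v(z)\preceq (z_*-z)^{-1}$ as $z\uparrow z_*$, and a standard Tauberian argument then delivers $\bE_p[\#\partial B_\mathrm{int}(v,r)]=O(e^{\gamma_\mathrm{int}(p)r})$. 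Summing in $r$ yields the required bound on $\Gr_p(r)$.

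The main obstacle is the identification $z_*=e^{\gamma_\mathrm{int}(p)}$: the inequality $z_*\ge e^{\gamma_\mathrm{int}(p)}$ follows from the propagation described above, but the reverse inequality $z_*\le e^{\gamma_\mathrm{int}(p)}$ is the delicate part. It requires constructing $\ell^2$ test functions that actually witness the annealed growth rate — naturally built by averaging indicators of finite sets calibrated to realize the exponential expansion of $\Gr_p(r)$ — and showing that $T_p^{(z)}$ applied to such a test function blows up in $\ell^2$ norm exactly when $z>e^{\gamma_\mathrm{int}(p)}$. Quasi-transitivity of $G$ is essential throughout to secure constants uniform in $v$.
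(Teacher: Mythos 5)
Your decomposition into upper and lower bounds, and the reduction of the whole theorem to the ball upper bound, is sound; your ball lower bound via Fekete and your sphere lower bound by rearranging \eqref{eq:Gr_submult_strong} closely parallel the paper's \cref{lem:better_Fekete}. The generating function $\Phi_v(z)$ you introduce is exactly the paper's $\sG(p,z,v)$, and the high-level strategy (simple-pole bound on $\sG$, then a Tauberian argument) is also the paper's. The problem is the key analytic step.

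First, a sign issue that matters for the framing: since $\bE_p[\#\partial B_\mathrm{int}(v,r)]$ grows like $e^{\gamma_\mathrm{int}(p)r}$, the radius of convergence of $\Phi_v$ is $z_*=e^{-\gamma_\mathrm{int}(p)}<1$, not $e^{\gamma_\mathrm{int}(p)}>1$. The whole interval of interest $[0,z_*]$ sits inside $[0,1]$, where $T_p^{(z)}\leq T_p$ entrywise and hence $\|T_p^{(z)}\|_{2\to2}\leq\|T_p\|_{2\to2}$ trivially. There is no ``propagation past $z=1$'' to perform, and the maximal interval of $\ell^2$-boundedness is not the object you need.

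Second, and decisively: the operator norm of $T_p^{(z)}$ does \emph{not} blow up as $z\uparrow z_*$ (it stays $\leq\|T_p\|_{2\to2}$ throughout), whereas the row sums $\Phi_v(z)=\langle T_p^{(z)}\delta_v,\mathbbm{1}\rangle$ do. These are genuinely different quantities — $\mathbbm{1}\notin\ell^2$ — and no resolvent identity or Schur-test manipulation converts a uniform operator bound into a rate of divergence for row sums; the Schur test runs in the opposite direction. So the crux, ``$\Phi_v(z)\preceq(z_*-z)^{-1}$,'' is asserted without a mechanism. The paper's actual mechanism (\cref{prop:alpha_dif_ineq}, proved via \cref{lem:ultraviolet}) is a nonlinear differential inequality
\[
\frac{\partial}{\partial\alpha}\sG(p,\alpha,u)\;\geq\;\eta(p,\alpha)\,\sG(p,\alpha,u)^2,
\]
established by an Aizenman--Newman-type path-splitting argument with a Reimer/BK error term. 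The $L^2$ boundedness is used there only to guarantee the modified open triangle condition $\sup_a T_p^2P^kT_p(a,a)\to0$, making the error term small — not as a resolvent estimate. Integrating the differential inequality is what delivers the simple pole $\sG^*(p,\alpha)\leq\kappa(p)/(\alpha_p-\alpha)$ in \cref{prop:Gpalpha_estimate}. Your plan has no analogue of this step, and it is the heart of the proof. Relatedly, ``a standard Tauberian argument'' is too quick: a simple boundary pole for a power series with nonnegative coefficients does not by itself give $a_r=O(z_*^{-r})$; the paper's \cref{lem:Tauberian} needs the submultiplicativity \eqref{eq:Gr_submult} to close the argument.
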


% The constants appearing this theorem may become degenerate in the limit as $p\downarrow p_c$ or $p \uparrow p_{2\to 2}$, but can be taken to be bounded above and below for $p$ belonging to an interval of the form $(p_c+\eps,p_{2\to 2}-\eps)$.

% \medskip

We do \emph{not} expect the conclusion of \cref{thm:L2_subexponential} to extend to the entire supercritical phase, even under the assumption of nonamenability. Indeed, the product $T \times \Z^d$ has  $\#B(0,r) = \Theta(r^d e^{\gamma r})$ for appropriate choice of $\gamma$, and it seems plausible that this $r^d$ subexponential correction to growth should also be present in percolation on this graph with $p$ close to $1$. 
% (It may be easier to analyze similar problems for free products, where it should also be possible to get unbounded subexponential corrections for $p$ close to $1$.)
 It is therefore an interesting and non-trivial fact that, in our setting, subexponential corrections to growth are always bounded when $p$ is supercritical but not too large.

\medskip

Our next theorem sharpens \cref{thm:L2_subexponential} by giving precise control over the asymptotics of the subexponential corrections to growth when $p$ is close to $p_c$.

\begin{theorem}[Volume growth near criticality]
\label{thm:slightly_subexponential}
Let $G$ be a connected, locally finite, quasi-transitive graph, and suppose that $p_c < p_{2\to 2}$. Then there exists a positive constant $\delta$ such that
\begin{align}
\gamma_\mathrm{int}(p) &\asymp p-p_c,
\label{eq:slightly_subexponential_gamma}
\\
 % \qquad \text{ and } \qquad
\bE_p \left[ \# B_\mathrm{int}(v,r) \right] &\asymp \left(r  \wedge \frac{1}{p-p_c} \right)^{\phantom{2}}  e^{\gamma_\mathrm{int}(p) r},
\label{eq:slightly_subexponential_unconditioned}
\\
\hspace{-2cm}\text{and}\qquad \bE_p \left[ \# B_\mathrm{int}(v,r) \mid v \leftrightarrow \infty \right] &\asymp \left(r  \wedge \frac{1}{p-p_c} \right)^2  e^{\gamma_\mathrm{int}(p) r}
\label{eq:slightly_subexponential_conditioned}
\end{align}
for every $v\in V$, $r \geq 0$, and $p_c < p \leq p_c+\delta$. 
\end{theorem}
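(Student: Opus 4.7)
The plan is to combine the sharp finite-cluster estimates from the first paper in this series \cite{hutchcroft2020slightly} with the exponential growth bound \cref{thm:L2_subexponential} and a decomposition into finite and infinite cluster contributions. The key near-critical inputs I assume from \cite{hutchcroft2020slightly} are the finite susceptibility bound $\chi_p^f \asymp (p-p_c)^{-1}$, the percolation probability $\bP_p(v \leftrightarrow \infty) \asymp p-p_c$, and the finite-cluster intrinsic ball and intrinsic one-arm estimates
\[
\bE_p[\#B_\mathrm{int}(v,r); K_v \text{ finite}] \asymp r \wedge (p-p_c)^{-1}, \qquad \bP_p(v \leftrightarrow \partial B_\mathrm{int}(v,r)) \asymp (1/r) \vee (p-p_c).
\]

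\textbf{Step 1: Growth rate \eqref{eq:slightly_subexponential_gamma}.} For the upper bound $\gamma_\mathrm{int}(p) \preceq p-p_c$, I would evaluate at the correlation-length scale $r_0 \asymp (p-p_c)^{-1}$ and decompose
\[
\bE_p[\#B_\mathrm{int}(v,r_0)] = \bE_p[\#B_\mathrm{int}(v,r_0); K_v \text{ finite}] + \bE_p[\#B_\mathrm{int}(v,r_0); v \leftrightarrow \infty];
\]
the finite piece is $\preceq (p-p_c)^{-1}$ by the input, while the infinite piece is bounded similarly by splitting intrinsic paths to infinity at scale $r_0$ via Reimer's inequality \eqref{eq:Gr_submult_strong} together with the intrinsic one-arm estimate. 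Combined with the Fekete bound $\Gr_p(r_0) \geq e^{\gamma_\mathrm{int}(p) r_0}$ from \eqref{eq:gamma_def} this first yields $\gamma_\mathrm{int}(p) r_0 \preceq \log[(p-p_c)^{-1}]$, and the logarithm is then removed by a bootstrap applying the submultiplicative inequality \eqref{eq:Gr_submult} inductively across scales. For the matching lower bound $\gamma_\mathrm{int}(p) \succeq p-p_c$, the trivial estimate $\Gr_p(r) \geq r \bP_p(v\leftrightarrow\infty)$ is only polynomial; I would instead set up a renewal/branching argument on the infinite cluster, showing via the first-paper one-arm estimates that the conditional boundary size $\bE_p[\#\partial B_\mathrm{int}(v,r_0) \mid v \leftrightarrow \infty]$ exceeds $1+c$ for some $c>0$ and $r_0 \asymp (p-p_c)^{-1}$, so that iterating gives $\Gr_p(k r_0) \succeq (1+c)^k$ and hence $\gamma_\mathrm{int}(p) \succeq 1/r_0 \asymp p-p_c$ via \eqref{eq:gamma_def}. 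Establishing this genuinely exponential lower bound is the principal obstacle.

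\textbf{Step 2: Unconditional volume \eqref{eq:slightly_subexponential_unconditioned}.} With $\gamma_\mathrm{int}(p) \asymp p-p_c$ in hand, I split into regimes. For $r \leq (p-p_c)^{-1}$ we have $e^{\gamma_\mathrm{int}(p) r} \asymp 1$, so the estimate reduces to the first-paper finite-cluster bound $\asymp r$, with the infinite-cluster contribution $\preceq (p-p_c) r^2 \preceq r$ subordinate. For $r > (p-p_c)^{-1}$, I apply the submultiplicative inequality \eqref{eq:Gr_submult} inductively in blocks of size $r_0$, transferring the sharp estimate $\Gr_p(r_0) \asymp (p-p_c)^{-1}$ at the correlation-length scale to arbitrary large $r$ while preserving the $(p-p_c)^{-1}$ prefactor; the matching lower bound is inherited from Step 3.

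\textbf{Step 3: Conditional volume \eqref{eq:slightly_subexponential_conditioned}.} The extra factor $r \wedge (p-p_c)^{-1}$ relative to the unconditional case is the near-critical analogue of Kesten's size-biasing identity: at criticality, $\bE_{p_c}[\#B_\mathrm{int}(v,r) \mid v \leftrightarrow \partial B_\mathrm{int}(v,r)] \asymp r^2$, following from $\bE_{p_c}[\#B_\mathrm{int}(v,r)] \asymp r$ and $\bP_{p_c}(v \leftrightarrow \partial B_\mathrm{int}(v,r)) \asymp 1/r$ (both from \cite{hutchcroft2020slightly}). At slightly supercritical $p$, conditioning on $v \leftrightarrow \infty$ plays the role of conditioning on $v \leftrightarrow \partial B_\mathrm{int}(v,(p-p_c)^{-1})$, since the intrinsic one-arm probability saturates at $\bP_p(v\leftrightarrow\infty) \asymp p-p_c$ beyond this scale; for $r \leq (p-p_c)^{-1}$ this yields the desired $r^2$, and for larger $r$ one composes with the exponential growth of Step 2 to obtain $(p-p_c)^{-2} e^{\gamma_\mathrm{int}(p) r}$. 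The upper bound uses BK/Reimer on the disjoint-occurrence decomposition $\{v \leftrightarrow u \text{ within intrinsic distance } r\} \circ \{u \leftrightarrow \infty\}$ summed over $u \in V$; the matching lower bound comes from a second-moment argument on the number of infinite-cluster points within intrinsic distance $r$ of $v$.
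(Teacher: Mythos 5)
There are two genuine gaps in your outline, both of which are the core technical work of the paper and are not filled by the tools you cite.

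First, your Step 1 lower bound $\gamma_\mathrm{int}(p)\succeq p-p_c$ via a renewal/branching argument at scale $r_0\asymp(p-p_c)^{-1}$ requires a supermultiplicative estimate of the form $\Gr_p(kr_0)\succeq (1+c)^k$, and you have no such tool: the inequality \eqref{eq:Gr_submult} goes the wrong way, percolation has no exact branching structure, and conditioning a boundary vertex of $B_\mathrm{int}(v,r_0)$ on $\{v\leftrightarrow\infty\}$ does not give an independent copy of the unconditioned cluster to iterate. You flag this yourself as ``the principal obstacle''; the paper resolves it quite differently, by taking the \emph{extrinsic} exponential growth rate $\limsup_\ell\frac{1}{\ell}\log\bE_p[\#K_v\cap B(v,\ell)]\asymp p-p_c$ from \cite[Cor.\ 4.3]{hutchcroft20192} and transferring it to the intrinsic metric via the operator norm bound $\|C^\mathrm{int}_{p,r}\|_{2\to2}\leq 3\|T_p\|_{2\to2}\exp[-r/(e\|T_p\|_{2\to2})]$ from \cite[Prop.\ 3.2]{hutchcroft20192}, which controls the mass of $K_v\cap B(v,\ell)$ lying outside $B_\mathrm{int}(v,r)$. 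That intrinsic-to-extrinsic comparison is a necessary input your sketch is missing. (For the matching upper bound the paper simply integrates Russo's formula to get $\gamma_\mathrm{int}(p)\leq(p-p_c)/p_c$ directly; your route through $\Gr_p(r_0)\preceq(p-p_c)^{-1}$ and Fekete gives $\gamma_\mathrm{int}(p)\preceq(p-p_c)\log[1/(p-p_c)]$ with an extra logarithm, and it is not clear that ``bootstrapping with \eqref{eq:Gr_submult}'' removes it, since $\gamma_\mathrm{int}(p)$ is already the \emph{infimum} in \eqref{eq:gamma_def}.)

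Second, your Step 2 argument for $r>(p-p_c)^{-1}$ — iterating \eqref{eq:Gr_submult} in blocks of size $r_0$ from $\Gr_p(r_0)\asymp(p-p_c)^{-1}$ — yields only $\Gr_p(kr_0)\preceq[(p-p_c)^{-1}]^k=\exp[k\log(p-p_c)^{-1}]$, whereas the target is $(p-p_c)^{-1}e^{\gamma_\mathrm{int}(p)kr_0}\asymp(p-p_c)^{-1}e^{\Theta(k)}$. Since $\log(p-p_c)^{-1}\gg 1$ near $p_c$, the block-submultiplicative bound is off by an unbounded factor in the exponent; crude submultiplicativity alone cannot isolate the exact exponential constant $e^{\gamma_\mathrm{int}(p)r}$ (the paper explicitly remarks that this sharpness is unusual). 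The paper obtains it by proving the differential inequality $\partial_\alpha\sG(p,\alpha,u)\geq\eta(p,\alpha)\sG(p,\alpha,u)^2$ for the generating function $\sG(p,\alpha,u)=\bE_p\sum_{v\in K_u}\alpha^{d_\mathrm{int}(u,v)}$ under the modified open triangle condition, integrating it to get $\sG^*(p,\alpha)\preceq\kappa(p)/(\alpha_p-\alpha)$, and then extracting pointwise control via the Cauchy--Schwarz ``Tauberian'' lemma together with \eqref{eq:Gr_submult}. None of this machinery appears in your outline; without it the upper bound in \eqref{eq:slightly_subexponential_unconditioned} outside the scaling window, and hence also the conditional estimate \eqref{eq:slightly_subexponential_conditioned} which relies on it and on $\sG^*(p,\alpha_p^2)\preceq(p-p_c)^{-1}$, cannot be closed.

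Your decomposition at the correlation length, the use of the first-paper inputs inside the window, the BK decomposition for the conditional upper bound, and the second-moment argument for the conditional lower bound are all aligned with the paper, so the high-level frame is right — but the extrinsic-to-intrinsic transfer and the generating-function differential inequality are precisely the two missing ingredients that make the sharp statement true.
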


% We see in particular that the critical-like quadratic growth within the scaling window of order $(p-p_c)^{-1}$ transitions in the simplest possible way to supercritical-like exponential growth outside of this scaling window.

The critical version of this estimate, stating that $\bE_{p_c} \#B_\mathrm{int}(v,r) \asymp r$ for every $r\geq 1$, was proven to hold for any transitive graph satisfying the triangle condition (which is implied by the $L^2$ boundedness condition \cite[p.4]{hutchcroft20192}) in \cite{MR2551766,sapozhnikov2010upper}.
 The transition from critical-like to supercritical-like behaviour outside a scaling window of intrinsic radius $|p-p_c|^{-1}$ is typical of off-critical percolation in high-dimensional settings \cite{hutchcroft2020slightly,hutchcroft2021high,chatterjee2021subcritical}. As is common to such analyses, our proofs will often treat the inside-window and outside-window cases separately, with the inside-window results following straightforwardly from what is known about critical percolation.

 % \medskip

\begin{remark}
The estimates of \cref{thm:L2_subexponential,thm:slightly_subexponential} are both significantly stronger than they would be if the right hand sides of \eqref{eq:slightly_subexponential_unconditioned} and \eqref{eq:slightly_subexponential_conditioned} contained terms of the form $e^{\Theta(\gamma_\mathrm{int}(p) r)}$, where the implicit constants in the upper and lower bounds could be different, rather than the exact exponential term $e^{\gamma_\mathrm{int}(p) r}$. In fact it is rather unusual to have such a sharp near-critical estimate in which the exact constant in the exponential is determined, and we are not aware of any other works in which this has been possible. (Indeed, for the near-critical two-point function on the high-dimensional lattice $\Z^d$ the constant in the exponential must be \emph{different} at distances on the order of the correlation length than it is at very large distances, as the equality of exponential rates across these scales would be inconsistent with Ornstein-Zernike decay  at very large scales \cite{hutchcroft2021high}.)
\end{remark}
% \medskip

\begin{remark}
It remains an open problem to establish an analogue of \cref{thm:slightly_subexponential} for infinite slightly supercritical percolation clusters on $\Z^d$ with $d$ large, i.e., to determine the precise manner in which quadratic growth within the scaling window \cite{MR2551766,sapozhnikov2010upper} transitions to $d$-dimensional growth on large scales \cite{MR1404543}. This problem is, in turn, closely related to the problem of computing the asymptotics of the \emph{time constant} for supercritical percolation as $p \downarrow p_c$. An analogous problem for high-dimensional \emph{random interlacements} has recently been solved to within subpolynomial factors in \cite{hernandez2021chemical}, and regularity results for the percolation time constant have been established in \cite{cerf2021time,MR4234130,MR3710798}.
\end{remark}
% \medskip
% In particular, the results of the present paper are closely related to the problem of computing the asymptotics of the \emph{time constant} for supercritical percolation on $\Z^d$ when $d$ is large and $p \downarrow p_c$ 

\subsection{Almost sure volume growth}

Our next theorem, which holds for the entire supercritical regime, shows that $\gamma_\mathrm{int}(p)$ also describes the \emph{almost sure} growth rate of the volume of intrinsic balls in infinite clusters in a rather strong sense. It is an analogue of the Kesten-Stigum theorem for supercritical branching processes \cite{KestenStigum66,LPP95}, and shows that the expectations studied in \cref{thm:L2_subexponential,thm:slightly_subexponential} are indeed the correct quantities to study if one wishes to understand the asymptotic growth of infinite clusters. The proof of this theorem, given in \cref{sec:KestenStigum}, can be read independently of the proofs of \cref{thm:L2_subexponential,thm:slightly_subexponential}.

\begin{theorem}[Expected and almost sure growth rates always coincide]
\label{thm:KestenStigum}
Let $G$ be a connected, locally finite, quasi-transitive graph, let $v$ be a vertex of $G$ and let $p_c < p \leq 1$. Then
\[
 \lim_{r\to\infty} \frac{1}{r}  \log  |\partial B_\mathrm{int}(v,r)| =  \lim_{r\to\infty} \frac{1}{r}  \log  | B_\mathrm{int}(v,r)| = \gamma_\mathrm{int}(p)
\]
$\bP_p$-almost surely on the event that $K_v$ is infinite. Moreover, we also have that
\begin{equation}
\label{eq:KS2}
\liminf_{r\to\infty} e^{-\gamma_\mathrm{int}(p)r}|\partial B_\mathrm{int}(v,r)|>0
\end{equation}
$\bP_p$-almost surely on the event that $K_v$ is infinite.
\end{theorem}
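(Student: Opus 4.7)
The theorem has two parts: the a.s.\ equality of growth rates for $|B_\mathrm{int}(v,r)|$ and $|\partial B_\mathrm{int}(v,r)|$ with $\gamma_\mathrm{int}(p)$, and the stronger liminf bound \eqref{eq:KS2}, which implies the lower bound half of the first part since $|\partial B_\mathrm{int}| \leq |B_\mathrm{int}|$.

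For the upper bound, the plan is Markov plus Borel-Cantelli. By \eqref{eq:h_def} and subadditivity of $h_p$, for any $\varepsilon > 0$ and all sufficiently large integer $r$ we have $h_p(r) \leq \varepsilon r/2$, so
\[
\bP_p\!\left(|B_\mathrm{int}(v,r)| \geq e^{(\gamma_\mathrm{int}(p)+\varepsilon)r}\right) \leq e^{-\varepsilon r/2},
\]
which is summable. Borel-Cantelli, monotonicity of $r \mapsto B_\mathrm{int}(v,r)$, and $\varepsilon \downarrow 0$ along a countable sequence yield $\limsup_r r^{-1}\log |B_\mathrm{int}(v,r)| \leq \gamma_\mathrm{int}(p)$ a.s., and likewise for $|\partial B_\mathrm{int}|$.

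For \eqref{eq:KS2}, the plan is a multi-scale branching-process comparison. From the identity $\sum_{k=0}^R \bE_p|\partial B_\mathrm{int}(v,k)| = \bE_p|B_\mathrm{int}(v,R)| \geq e^{\gamma_\mathrm{int}(p) R}$ (using the infimum form of \eqref{eq:gamma_def}) and averaging against the exponential weights $e^{\gamma_\mathrm{int}(p) k}$, there is some scale $R' \leq R$ with $\bE_p|\partial B_\mathrm{int}(v,R')| \geq c_0 \, e^{\gamma_\mathrm{int}(p) R'}$ for a fixed $c_0 > 0$. Explore the cluster in scale-$R'$ increments: at each step, restrict to a constant-density sub-collection of boundary vertices whose forward $R'$-neighborhoods are edge-disjoint (available because $G$ has bounded degree), making their forward clusters conditionally independent by the domain Markov property. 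This stochastically dominates a Galton-Watson process with bounded offspring (bounded by $M(M-1)^{R'-1}$) and mean $m \geq c_1 \, e^{\gamma_\mathrm{int}(p) R'}$, which exceeds $1$ for $R'$ large. Classical branching theory with bounded offspring then gives $Y_k / m^k \to W$ a.s.\ with $W > 0$ on survival, so $|\partial B_\mathrm{int}(v, kR')| \geq c_2 \, m^k$ on survival, and as $R' \to \infty$ the rate $R'^{-1}\log m$ approaches $\gamma_\mathrm{int}(p)$.

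The main obstacle is that the constant density loss from the edge-disjointness step leaves the offspring mean short of $e^{\gamma_\mathrm{int}(p) R'}$ by a constant factor, which compounded across $k$ generations falls short of the exact normalization $e^{-\gamma_\mathrm{int}(p) r}$ demanded by \eqref{eq:KS2}; the branching comparison as stated only delivers $\liminf_r r^{-1} \log |\partial B_\mathrm{int}(v,r)| \geq \gamma_\mathrm{int}(p)$, which suffices for the first part of the theorem but not for \eqref{eq:KS2}. Closing this gap requires a more refined argument---for instance, replacing the naive edge-disjoint pruning with a size-biased or spine-type construction calibrated so that the effective offspring mean matches $e^{\gamma_\mathrm{int}(p) R'}$ exactly, or a direct martingale convergence argument adapted to the scale-by-scale exploration. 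Finally, upgrading ``with positive probability'' to ``a.s.\ on $\{v \leftrightarrow \infty\}$'' relies on the liminf event being insensitive to finite edge modifications, combined with a $0$-$1$ argument via the Markov property of percolation and quasi-transitivity (allowing the branching-process lower bound to be restarted from arbitrarily deep vertices of the infinite cluster).
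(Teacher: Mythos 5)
Your upper bound via Markov's inequality and Borel--Cantelli matches the paper exactly and is fine. For the $0$-$1$ upgrade, the paper invokes the indistinguishability theorem of H\"aggstr\"om--Peres--Schonmann to conclude that $\bP_p(\sA_v\mid v\leftrightarrow\infty)\in\{0,1\}$ and does not depend on $v$; your sketch of "restart from deep vertices plus insensitivity to finite modifications" is not obviously equivalent, and in particular does not by itself rule out the possibility that \emph{some} infinite clusters have the liminf property while others do not, which is exactly what indistinguishability is needed for.

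The serious problem is in the proof of \eqref{eq:KS2}, and you have correctly diagnosed it yourself: the edge-disjoint-ball branching comparison suffers a constant-density loss per scale $R'$, which compounds to an exponential loss over $k$ scales. That gives $\liminf_r r^{-1}\log|\partial B_\mathrm{int}(v,r)|\geq\gamma_\mathrm{int}(p)-o(1)$ (by sending $R'\to\infty$), which is enough for the first display of the theorem but strictly weaker than \eqref{eq:KS2}, where the exact rate $\gamma_\mathrm{int}(p)$ with no slack in the exponent is required. The fixes you gesture at (size-biased spine construction, martingale convergence "adapted to the scale-by-scale exploration") are not carried out, and it is not clear how to calibrate the offspring mean to be \emph{exactly} $e^{\gamma_\mathrm{int}(p)R'}$ when the pruning step is lossy and the subexponential correction $h_p$ may be unbounded. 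So as written this is a genuine gap, not a routine detail.

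The paper takes a different route that sidesteps the branching comparison entirely. Write $\Gr_p(r)=e^{\gamma_\mathrm{int}(p)r+h_p(r)}$ and use a Reimer-inequality second-moment bound of the form
\begin{equation*}
\sup_v\bE_p\bigl[|B_\mathrm{int}(v,r)|^2\bigr]\leq\sup_v\bE_p\bigl[|B_\mathrm{int}(v,r)|\bigr]^2\sum_{\ell=0}^r\exp\bigl[-\gamma_\mathrm{int}(p)\ell-(2h_p(r)-h_p(\ell)-2h_p(r-\ell))\bigr].
\end{equation*}
One then identifies an infinite set $\mathscr{R}_\lambda$ of "good" radii at which $h_p$ is within $\lambda$ of its running maximum \emph{and} the second moment is within a factor $\lambda$ of the first moment squared (treating the cases $h_p$ bounded/unbounded separately), and runs a stopping-time argument for the first time $R_\eps$ that $|\partial B_\mathrm{int}(v_0,r)|\leq\eps e^{\gamma_\mathrm{int}(p)r}$, combining the conditional first-moment bound after $R_\eps$ with Cauchy--Schwarz on good scales to show $\bP_p(R_\eps=\infty)>0$ for $\eps$ small. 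This gets the exact exponent because the comparison is always against the actual expectation $\bE_p|B_\mathrm{int}(v_0,r)|$, with no lossy pruning step. To turn your sketch into a proof you would either need to reproduce something of this kind, or supply the claimed spine/martingale construction in full detail; as it stands, \eqref{eq:KS2} is not proved.
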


As an aside, we also prove that $\gamma_\mathrm{int}(p)$ is always positive for $p>p_c$ whenever the underlying graph has exponential volume growth. In the nonamenable case this is an easy consequence of the results of, say, \cite{BLS99} or \cite{HermonHutchcroftSupercritical}; we show that a simple and direct proof is also possible in the amenable case.

\begin{theorem}
\label{thm:amenable}
Let $G$ be a connected, locally finite, quasi-transitive graph. If $G$ has exponential volume growth then $\gamma_\mathrm{int}(p)>0$ for every $p_c<p\leq 1$.
\end{theorem}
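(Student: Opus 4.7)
The case $p = 1$ is immediate: since $B_\mathrm{int}(v, r) = B(v, r)$ when $p = 1$, exponential volume growth of $G$ gives $\gamma_\mathrm{int}(1) = \gamma(G) > 0$. For general $p \in (p_c, 1)$ my first move would be a naive first-moment bound. For any $u$ at graph distance $k \leq r$ from $v$, opening a specific geodesic from $v$ to $u$ shows that $\bP_p(d_\mathrm{int}(v, u) \leq k) \geq p^k$, whence
\[
\bE_p |B_\mathrm{int}(v, r)| \;\geq\; \sum_{k=0}^r p^k \, \bigl|\{u : d_G(v, u) = k\}\bigr| \;\asymp\; \bigl(p e^{\gamma(G)}\bigr)^r
\]
when $p e^{\gamma(G)} > 1$, using that quasi-transitivity combined with exponential volume growth forces the sphere sizes to grow at rate $\gamma(G) > 0$. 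This gives $\gamma_\mathrm{int}(p) \geq \gamma(G) + \log p > 0$ for every $p > e^{-\gamma(G)}$, and in particular recovers $\gamma_\mathrm{int}(1) \geq \gamma(G)$.

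The delicate range is $p \in (p_c, e^{-\gamma(G)}]$, which can be nonempty as soon as $G$ has cycles. The nonamenable case of the theorem follows from the anchored expansion results of \cite{BLS99} or \cite{HermonHutchcroftSupercritical}, so I would focus on the amenable case. There, Burton--Keane gives a unique infinite cluster for $p > p_c$, and FKG then gives the uniform positive lower bound $\bP_p(v \leftrightarrow u) \geq \theta(p)^2 > 0$. Combined with exponential volume growth, this yields the extrinsic estimate
\[
\bE_p |K_v \cap B(v, r)| \;=\; \sum_{u \in B(v, r)} \bP_p(v \leftrightarrow u) \;\succeq\; e^{\gamma(G) r}.
\]
To convert this extrinsic count into a bound on the intrinsic quantity $\bE_p |B_\mathrm{int}(v, R)|$, I would aim for a chemical-distance comparison: the existence of $C = C(p) > 0$ and $c > 0$ with $\bP_p(d_\mathrm{int}(v, u) \leq C\, d_G(v, u)) \geq c\, \bP_p(v \leftrightarrow u)$ for all $v,u$. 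Granting such an inequality, summing over $u \in B(v, r)$ gives $\bE_p |B_\mathrm{int}(v, Cr)| \succeq e^{\gamma(G) r}$, and hence $\gamma_\mathrm{int}(p) \geq \gamma(G)/C > 0$.

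The main obstacle is establishing the chemical-distance comparison in this generality. For $\mathbb{Z}^d$ it is the Antal--Pisztora inequality, and analogous statements are known in various special settings, but I am not aware of a single uniform reference that covers all amenable quasi-transitive graphs with exponential volume growth. The author's remark that their proof is ``simple and direct'' suggests they either have a convenient reference in mind or use a slicker argument---perhaps exploiting the submultiplicativity of $\Gr_p$ or a renormalisation-style iteration---that bypasses the chemical-distance question entirely.
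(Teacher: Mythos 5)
Your reduction to the amenable case and the extrinsic estimate $\bE_p|K_v\cap B(v,r)|\succeq e^{\gamma(G)r}$ is exactly the right setup, and you correctly identify the remaining gap: converting the extrinsic bound into an intrinsic one. However, the Antal--Pisztora-style comparison you propose (that $d_\mathrm{int}(v,u)\leq C\, d_G(v,u)$ with probability $\succeq \bP_p(v\leftrightarrow u)$, with a single constant $C$ uniform over all pairs) is both stronger than what is needed and not available in this generality, which is precisely why the step stalls; as written your argument is incomplete.

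The paper sidesteps the chemical-distance comparison with a coarse-graining argument that needs only continuity of measure and FKG. Fix $p>p_c$ and let $\theta_*(p)=\min_v\bP_p(v\leftrightarrow\infty)$; uniqueness of the infinite cluster and Harris--FKG give $\bP_p(u\leftrightarrow v)\geq\theta_*(p)^2$ for all $u,v$. By quasi-transitivity and continuity of measure, for each \emph{fixed} $r$ there is a finite $R(r,p)$ with $\bP_p\bigl(u\xleftrightarrow{R(r,p)}v\bigr)\geq\tfrac12\theta_*(p)^2$ whenever $d(u,v)\leq r$ --- note that no uniformity in $r$ is required here, which is what makes the step elementary. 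Chaining a sequence $u_0,\ldots,u_k$ with $d(u_i,u_{i+1})\leq r$ and applying FKG to the $k$ events $\{u_i\xleftrightarrow{R(r,p)}u_{i+1}\}$ shows that any pair at distance $\leq kr$ is connected within intrinsic distance $kR(r,p)$ with probability at least $\bigl(\tfrac12\theta_*(p)^2\bigr)^k$. Comparing the intrinsic ball of radius $kR(r,p)$ with the extrinsic ball of radius $kr$ then gives $\gamma_\mathrm{int}(p)\geq \frac{1}{R(r,p)}\bigl(r\gamma(G)-\log\frac{2}{\theta_*(p)^2}\bigr)$, which is strictly positive once $r$ is chosen large enough that $r\gamma(G)>\log\frac{2}{\theta_*(p)^2}$. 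The exponential-in-$k$ probability cost is thus absorbed by the exponential-in-$kr$ volume, at the price of a worse (but still linear in $k$) intrinsic radius --- this is exactly the ``renormalisation-style iteration'' you speculated might bypass the chemical-distance question.
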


\begin{remark}
In general, the clusters of invariant percolation processes need not have well-defined rates of exponential growth as shown by Tim\'ar \cite{timar2014stationary}. Interesting recent work of Abert, Fraczyk, and Hayes \cite{abert2022co} has initiated a systematic study of the growth of unimodular random graphs and established criteria in which the growth must exist for unimodular random \emph{trees}. In light of \cref{thm:L2_subexponential,thm:KestenStigum}, Bernoulli percolation may already provide a surprisingly rich test case for this theory.
\end{remark}

% \medskip

\subsection{The anchored Cheeger constant}

Our final set of results concern the \emph{isoperimetry} of the infinite clusters in slightly supercritical percolation.
Recall that the \textbf{anchored Cheeger constant} of a connected, locally finite graph $G$ is defined to be
\[\Phi^*(G) = \liminf_{n\to\infty}\Biggl\{\frac{|\partial_E W|}{\sum_{w\in W} \deg(w)} : v \in W \subseteq V \text{ connected, } n \leq \sum_{w\in W} \deg(w) <\infty\Biggr\},
\]
where $v$ is a fixed vertex of $G$ whose choice does not affect the value of $\Phi^*(G)$. We say that $G$ has \textbf{anchored expansion} if $\Phi^*(G)>0$. This notion was introduced by Benjamini, Lyons, and Schramm \cite{BLS99}, who conjectured that infinite supercritical percolation clusters on nonamenable transitive graphs have anchored expansion. This conjecture was proven in \cite{HermonHutchcroftSupercritical}, following earlier partial results of Chen, Peres, and Pete \cite{chen2004anchored}. The following theorem establishes a quantitative version of this result for graphs satisfying the $L^2$ boundedness condition. Unfortunately we have not quite been able to prove a sharp version of the theorem, but rather are left with a presumably unnecessary logarithmic term in the lower bound.

\begin{theorem}[The anchored Cheeger constant near criticality]
\label{thm:Cheeger}
Let $G$ be a connected, locally finite, quasi-transitive graph with $p_c<p_{2\to 2}$. Then there exist constants $c$ and $C$ such that every infinite cluster in Bernoulli-$p$ bond percolation on $G$ has anchored expansion with anchored Cheeger constant
\[ \frac{c(p-p_c)^2}{\log[1/(p-p_c)]} \leq \Phi^*(K) \leq C(p-p_c)^2
\]
$\bP_p$-almost surely for every $p_c < p \leq 1$.
\end{theorem}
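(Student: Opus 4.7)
The plan is to prove the two bounds separately. The upper bound will follow from an explicit construction of a near-optimal witnessing set built from the results established earlier in this paper, while the lower bound will require a quantitative refinement of the anchored expansion argument of \cite{HermonHutchcroftSupercritical}. Since $\Phi^*(K)\leq 1$ trivially, both bounds are automatic away from $p_c$, so the content of the theorem is in the regime where $p$ is slightly larger than $p_c$; in that regime we may write $\gamma:=\gamma_\mathrm{int}(p)\asymp p-p_c$ by \eqref{eq:slightly_subexponential_gamma}.

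For the upper bound, I would work on the event $\{v\leftrightarrow\infty\}$ and take as witness the filled-in intrinsic ball
\[
W_r := B_\mathrm{int}(v,r) \;\cup\; \bigcup\bigl\{C : C \text{ a finite component of } K\setminus B_\mathrm{int}(v,r)\bigr\},
\]
which is a connected subset of $V(K)$ containing $v$ whose edge-boundary $\partial_E W_r$ in $K$ consists precisely of the open edges from $\partial B_\mathrm{int}(v,r)$ into the infinite components of $K\setminus B_\mathrm{int}(v,r)$. The plan is then to compare volume and boundary: the Kesten--Stigum-type \cref{thm:KestenStigum} combined with a Paley--Zygmund argument applied to the sharp conditional estimate \eqref{eq:slightly_subexponential_conditioned} should give $|W_r|\geq|B_\mathrm{int}(v,r)|\gtrsim(p-p_c)^{-2}e^{\gamma r}$ along a random subsequence $r_n\to\infty$. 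On the boundary side, each external neighbour $y$ of $\partial B_\mathrm{int}(v,r)$ lies in an infinite component of $K\setminus B_\mathrm{int}(v,r)$ with probability at most $\bP_p(y\leftrightarrow\infty)\asymp p-p_c$ by results from the first paper, which together with $\bE|\partial B_\mathrm{int}(v,r)|\lesssim e^{\gamma r}$ from \cref{thm:L2_subexponential} yields $\bE|\partial_E W_r|\lesssim(p-p_c)e^{\gamma r}$. A Markov--Borel--Cantelli argument along the subsequence then gives $|\partial_E W_{r_n}|/\mathrm{vol}(W_{r_n})\lesssim(p-p_c)^2$ almost surely, as required.

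For the lower bound, I would start with an arbitrary finite connected $W\subseteq V(K)$ containing $v$ with $\mathrm{vol}(W)$ large. A simple monotonicity observation---filling in all finite components of $K\setminus W$ can only decrease the Cheeger ratio---allows one to assume every component of $K\setminus W$ is infinite. Nonamenability of $G$, which is implied by $p_c<p_{2\to 2}$, then gives $|\partial_E^G W|\geq h|W|$ for a constant $h>0$. For each $\{u,y\}\in\partial_E^G W$ with $u\in W$ and $y\notin W$, the goal is to lower-bound the conditional probability that $\{u,y\}$ is open and $y$ lies in an infinite component of $K\setminus W$, since any such edge belongs to $\partial_E^K W$. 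Using $\bP_p(y\leftrightarrow\infty)\asymp p-p_c$ and the conditional cluster-tail estimates from the first paper \cite{hutchcroft2020slightly}, together with an FKG/sprinkling decoupling to handle the constraint $W\subseteq V(K)$, I would aim for a per-edge lower bound of order $(p-p_c)^2/\log[1/(p-p_c)]$, which sums to the desired isoperimetric inequality.

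The main obstacle will be the lower bound. The difficulty is that for an arbitrary set $W$ in the infinite cluster, the conditioning $W\subseteq V(K)$ is a global constraint coupling events on all scales, and extracting the correct quadratic exponent $(p-p_c)^2$ uniformly in $W$ requires a careful combination of the first paper's sharp near-critical two-point and cluster-size estimates with a conditional decoupling argument. The logarithmic loss in the final lower bound will reflect the cost of a multi-scale union bound needed to handle all spatial scales up to the correlation length $\xi\asymp(p-p_c)^{-1}$ simultaneously.
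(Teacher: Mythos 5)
Your overall plan for the \textbf{upper bound} (take a hull/filled-in intrinsic ball as the witnessing set, control its volume from below via the conditional growth estimate and its open edge boundary from above via $\theta^*(p)\asymp p-p_c$) is the same as the paper's, but the argument you sketch to pass from expectations to an almost-sure bound does not close. There are two issues. First, you combine an \emph{unconditional} estimate $\bE_p|\partial B_\mathrm{int}(v,r)|\preceq e^{\gamma r}$ with a \emph{conditional} lower bound $|B_\mathrm{int}(v,r)|\succeq (p-p_c)^{-2}e^{\gamma r}$ on $\{v\leftrightarrow\infty\}$; these live on different conditional measures, and the events involved are highly correlated, so dividing one by the other has no probabilistic meaning. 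Second, a Markov--Borel--Cantelli argument along a subsequence $r_n$ necessarily introduces a diverging factor $f(n)\to\infty$ in $|\partial_E W_{r_n}|\leq f(n)\bE|\partial_E W_{r_n}|$, so you never recover the exact order $(p-p_c)^2$ for the $\liminf$. What is needed (and what the paper does) is to (i) use the growth rate to deduce that $\liminf_r |\partial B_\mathrm{int}(v,r)|/|B_\mathrm{int}(v,r)|\preceq p-p_c$ almost surely, giving infinitely many ``good radii'' $T_j$ where $|\partial B_\mathrm{int}(v,T_j)|\preceq (p-p_c)|B_\mathrm{int}(v,T_j)|$, (ii) condition on the stopped exploration $\cF_{T_j}$ so that the conditional expectation of the number of sphere vertices connected to $\infty$ off the ball is $\preceq\theta^*(p)|\partial B_\mathrm{int}(v,T_j)|\preceq(p-p_c)^2|B_\mathrm{int}(v,T_j)|$, and (iii) establish a variance bound (the paper's \cref{lem:Ainfty_variance}, using $\|T_p\|_{2\to 2}<\infty$) so that Chebyshev gives concentration. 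Your sketch omits both the good-radii filtering and the variance estimate, which together are the crux of the proof.

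For the \textbf{lower bound} your proposed route is genuinely different from the paper's, but as you yourself note it is incomplete, and I do not see how it can be made to work. The paper does not attempt any per-boundary-edge estimate under the global conditioning $W\subseteq V(K)$; instead it passes entirely through the exponential decay rate $\zeta(p)=\liminf_n -\frac{1}{n}\sup_v\log\bP_p(n\leq|K_v|<\infty)$ of finite cluster sizes. The crucial input is \cite[Proposition 3.2]{HermonHutchcroftSupercritical} (Pete's argument), which converts a lower bound on $\zeta(p)$ into a lower bound on $\Phi^*(K)$; combined with the estimate $\zeta(p)\succeq(p-p_c)^2$ from \cite[Theorem 1.1 and Corollary 1.3]{hutchcroft2020slightly} and a separate estimate (\cref{lem:Benjamini_Schramm}, an adaptation of Benjamini--Schramm) to handle $p$ close to $1$, this gives the stated bound. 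The logarithmic loss in the theorem arises when converting $\zeta(p)$ into an anchored Cheeger constant via the entropy inequality \eqref{eq:zetatocheeger}, not from a multi-scale union bound as you suggest. Your proposal does not mention $\zeta(p)$ or Pete's black-box at all, and the per-edge decoupling you hope for is precisely the difficulty that Pete's argument is designed to sidestep; without some replacement for it your lower bound has a genuine gap.
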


In the forthcoming third paper in this series we prove stronger bounds giving high-probability control of the entire \emph{isoperimetric profile} both for the infinite clusters and their \emph{cores}.

\section{The growth rate near criticality}

In this section we apply the results of \cite{hutchcroft20192} to prove the part of \cref{thm:slightly_subexponential} concerning the limiting exponential growth rate $\gamma_\mathrm{int}(p)=\lim_{r\to\infty}\frac{1}{r} \log \Gr_p(r)$.

\begin{prop}
\label{prop:growth_estimate}
Let $G$ be a connected, locally finite, quasi-transitive graph, and suppose that $p_c<p_{2\to 2}$. Then $\gamma_\mathrm{int}(p) \asymp p-p_c$ for every $p \geq p_c$.
\end{prop}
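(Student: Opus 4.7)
The plan is to establish the two one-sided bounds $\gamma_\mathrm{int}(p) \preceq p - p_c$ and $\gamma_\mathrm{int}(p) \succeq p - p_c$ separately. By monotonicity of $\gamma_\mathrm{int}(p)$ in $p$, only the regime of $p$ close to $p_c$ is interesting, since for $p$ bounded away from $p_c$ both sides are pinched between positive constants depending only on the graph (combined with the trivial bound $\gamma_\mathrm{int}(p) \leq \gamma_\mathrm{int}(1) = \gamma(G)$). The main external input is the sharp critical behaviour established in \cite{hutchcroft20192} under the $L^2$ boundedness condition; in particular $\Gr_{p_c}(r) \asymp r$ as a consequence of the triangle condition.

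For the upper bound $\gamma_\mathrm{int}(p) \preceq p - p_c$, I would use a differential-inequality argument in $p$. Russo's formula applied to the increasing random variable $\#B_\mathrm{int}(v,r)$ together with a Reimer/BK decomposition of the pivotal events yields an estimate of the schematic form
$$\frac{d}{dp}\, \Gr_p(r) \;\leq\; C \sum_{k_1 + k_2 \leq r-1} \Gr_p(k_1)\, \Gr_p(k_2).$$
Starting from the critical bound $\Gr_{p_c}(r) \asymp r$ and integrating this inequality in $p$ (with a bootstrap controlling $\Gr_{p'}(r)$ for $p_c \leq p' \leq p$), one should deduce a bound of the form $\Gr_p(r) \leq A(r)\cdot e^{C(p-p_c)r}$ with $A$ at most polynomial in $r$. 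Since $\gamma_\mathrm{int}(p) = \lim_{r\to\infty} r^{-1}\log \Gr_p(r)$, the polynomial prefactor is irrelevant in the limit and one concludes $\gamma_\mathrm{int}(p) \leq C(p - p_c)$.

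For the lower bound $\gamma_\mathrm{int}(p) \succeq p - p_c$, I would use a Galton--Watson-type comparison at the intrinsic correlation length scale $R \asymp 1/(p-p_c)$. The mean-field picture inherited from $L^2$ boundedness suggests that for an appropriate constant in $R$ we have $\bE_p[\#\partial B_\mathrm{int}(v, R)] \geq 2$ uniformly in $v$ and in $p$ slightly above $p_c$. Iterating this estimate across generations via a breadth-first exploration, and exploiting the fact that edges beyond the explored frontier remain unconditioned, gives $\Gr_p(kR) \succeq 2^k$, hence $\gamma_\mathrm{int}(p) \geq (\log 2)/R \asymp p-p_c$.

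The main technical obstacle is that the naive combination of submultiplicativity of $\Gr_p$ with the scaling-window estimate $\Gr_p(r) \asymp r$ for $r \leq 1/(p-p_c)$ yields only $\gamma_\mathrm{int}(p) \preceq (p-p_c)\log\bigl(1/(p-p_c)\bigr)$, which is off by a logarithmic factor. Eliminating this logarithm is the delicate part of the argument: it requires genuinely exploiting the differential inequality above (rather than a single application of submultiplicativity), and very possibly an operator-theoretic interpretation leveraging the fact that $\|T_p\|_{2\to 2}$ is finite under the $L^2$ boundedness condition. For the lower bound, the main difficulty is ensuring that successive generations of the Galton--Watson comparison are sufficiently independent for the iteration to close; this should be achievable via the Markov-type decoupling provided by the BFS exploration filtration together with FKG or BK inequalities.
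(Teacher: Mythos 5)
Your high-level plan — a Russo-type differential inequality in $p$ for the upper bound, a supercritical branching comparison for the lower bound, and a correctly diagnosed danger of a spurious $\log(1/(p-p_c))$ factor — is sensible, but both halves have gaps, and both halves run differently from the paper.

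For the upper bound, the paper's Russo argument is much simpler and sharper than a Reimer/BK decomposition of pivotals. The key observation is that on $\{u\xleftrightarrow{r}v\}$, every open pivotal edge must lie on any given open path of length at most $r$ from $u$ to $v$; there are therefore at most $r$ open pivotals. Russo's formula then yields directly
$\frac{d}{dp}\log\bE_p[\#B_\mathrm{int}(v,r)]\leq r/p$, which integrates to $\gamma_\mathrm{int}(p)\leq(p-p_c)/p_c$ after sending $q\uparrow p_c$ and using sharpness of the phase transition (so $\gamma_\mathrm{int}=0$ for $q<p_c$). This kills the logarithmic loss with no bootstrap, and in fact holds on every quasi-transitive graph with no $L^2$ input. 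Your schematic convolution-type inequality $\frac{d}{dp}\Gr_p(r)\preceq\sum_{k_1+k_2\leq r-1}\Gr_p(k_1)\Gr_p(k_2)$ is not obviously tight enough, and the bootstrap you gesture at to remove the $\log$ is exactly the unaddressed hard part; as stated, this half is incomplete.

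For the lower bound, the precise ingredient you assert — $\bE_p[\#\partial B_\mathrm{int}(v,R)]\geq 2$ uniformly at scale $R\asymp 1/(p-p_c)$ — does not follow from the scaling-window estimate $\bE_p[\#B_\mathrm{int}(v,r)]\asymp r$: that estimate has uniform but uncontrolled constants on each side and gives no lower bound on how far the "offspring mean" of the sphere exceeds $1$. You would also need to control overlaps across generations in the breadth-first iteration, which you flag but do not resolve. The paper instead imports the \emph{extrinsic} exponential-growth estimate $\limsup_{\ell\to\infty}\frac{1}{\ell}\log\bE_p[\#(K_v\cap B(v,\ell))]\asymp p-p_c$ from \cite[Corollary 4.3]{hutchcroft20192}, and transfers it to the intrinsic metric using the $L^2$ bound $\|C^\mathrm{int}_{p,r}\|_{2\to 2}\leq 3\|T_p\|_{2\to 2}\exp[-r/(e\|T_p\|_{2\to 2})]$ from \cite[Proposition 3.2]{hutchcroft20192} (where $C^\mathrm{int}_{p,r}(u,v)=\P_p(u\leftrightarrow v,\,d_\mathrm{int}(u,v)\geq r)$), via Cauchy--Schwarz: choosing $\ell\asymp r/\|T_p\|_{2\to 2}$, the part of $K_v\cap B(v,\ell)$ lying outside $B_\mathrm{int}(v,r)$ is exponentially negligible, so the extrinsic growth rate is inherited by the intrinsic ball. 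This is the operator-theoretic use of $\|T_p\|_{2\to 2}$ you speculated about, but it enters through an intrinsic-to-extrinsic comparison rather than a branching estimate.
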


% analyze the expected volume of an intrinsic ball inside the scaling window then use this together with the results of \cite{hutchcroft20192} to prove that $\gamma_\mathrm{int}(p)\asymp (p-p_c)$.

% Every connected, locally finite, quasi-transitive graph $G$ has $\gamma_\mathrm{int}(p)=0$ for every $p \leq p_c$, see \cite[Lemma 1]{kozma2011percolation} and the proof of \cref{prop:growth_estimate} below. On the other hand, it is a consequence of \cite{HermonHutchcroftSupercritical} that if $G$ is nonamenable then $\gamma_\mathrm{int}(p)>0$ for every $p>p_c$. (Easier and more direct proofs of this result are also available.) Our first goal is to provide a quantitative form of this result under the assumption that $p_c<p_{2\to 2}$; we will then study the subexponential corrections to this exponential growth in the next section. 

We begin with the following simple lemma, which is closely related to the results of \cite{sapozhnikov2010upper,MR2551766}. We recall that the \textbf{triangle diagram} $\nabla_p$ is defined by $\nabla_p=\sup_{v\in V} T_p^3(v,v)$, and that a quasi-transitive graph is said to satisfy the \textbf{triangle condition} if $\nabla_{p_c}<\infty$.

\begin{lemma}
\label{lem:growth_in_scaling_window}
Let $G$ be a connected, locally finite, quasi-transitive graph satisfying $\nabla_{p_c} < \infty$. Then 
\begin{align}
\bE_p \left[ \# B_\mathrm{int}(v,r) \right] &\asymp r 
% \qquad \text{ and } \qquad
% \bE_p \left[ \# B_\mathrm{int}(v,r) \mid v \leftrightarrow \infty \right] \asymp r^2  
\end{align}
for every $v\in V$, $p_c \leq p \leq 1$, and $1 \leq r \leq (p-p_c)^{-1}$.
\end{lemma}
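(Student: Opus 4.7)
My proof plan would treat the two directions separately, with asymmetric difficulty.

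\textbf{Lower bound.} This is essentially free: since the event $\{u \in B_\mathrm{int}(v,r)\}$ is monotone in the configuration (adding an edge can only decrease the intrinsic distance), Harris/FKG monotonicity gives that $p \mapsto \bE_p[\#B_\mathrm{int}(v,r)]$ is non-decreasing, and therefore
\[
\bE_p[\#B_\mathrm{int}(v,r)] \;\geq\; \bE_{p_c}[\#B_\mathrm{int}(v,r)] \;\geq\; c r \qquad \text{for every } r \geq 1,
\]
where the last inequality is the critical volume lower bound of Kozma--Nachmias \cite{MR2551766}, which is valid whenever the triangle condition holds. No use of the hypothesis $r \leq (p-p_c)^{-1}$ is needed here.

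\textbf{Upper bound.} Here the idea is to show that the bound $\bE_{p_c}[\#B_\mathrm{int}(v,r)] \leq Cr$ of Sapozhnikov \cite{sapozhnikov2010upper} is \emph{stable} under perturbation of $p$ throughout the scaling window. The natural mechanism is a differential inequality. Using Russo's formula combined with the BK/Reimer inequality (the same inequality that underlies \eqref{eq:Gr_submult_strong}), the derivative of $\bE_p[\#B_\mathrm{int}(v,r)]$ in $p$ can be expressed as a sum over pivotal edges $e = \{x,y\}$, in which each pivotal pair $(e,u)$ witnesses the existence of edge-disjoint intrinsic paths $v \to x$ and $y \to u$ of combined length at most $r-1$. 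This should yield a convolution-type estimate of the form
\[
\frac{d}{dp}\bE_p[\#B_\mathrm{int}(v,r)] \;\leq\; C \sum_{a+b=r-1} \partial\!\Gr_p(a)\, \Gr_p(b),
\]
where $\partial\!\Gr_p(a)=\sup_v \bE_p[\#\partial B_\mathrm{int}(v,a)]$ and the constant depends only on the maximum degree. Integrating from $p_c$ to $p$ and using Sapozhnikov's bound $\partial\!\Gr_{p_c}(a)\leq C$ together with $\Gr_{p_c}(b)\leq Cb$ as the base of a Gr\"onwall-type bootstrap, one finds that the increment $\Gr_p(r) - \Gr_{p_c}(r)$ is at most $C r^2 (p-p_c)$, which is $\leq Cr$ precisely when $r\leq (p-p_c)^{-1}$.

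\textbf{Main obstacle.} The hard step is controlling the sphere expectation $\partial\!\Gr_p(a)$ uniformly in $a$ and for $p$ slightly above $p_c$: a naive triangle bound using only $\Gr_p(b) \leq Cb$ in both factors of the convolution yields an $O(r^3)$ differential inequality, which integrates to $O(r^2)$ in the scaling window and is off by a factor of $r$. To close the gap one needs sharp sphere estimates of the form $\partial\!\Gr_p(a) \leq C$ uniformly for $p_c \leq p \leq p_c+\delta$; I would obtain these from the sharp finite-cluster estimates proved in the first paper \cite{hutchcroft2020slightly}, which give in particular that $\bP_p(R_\mathrm{int}(v)\geq r)$ is comparable to its critical counterpart throughout the scaling window. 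Once the critical-type sphere bound is established for $p$ in the window, the convolution argument outlined above closes and yields the matching upper bound $\Gr_p(r) \leq C r$, completing the proof.
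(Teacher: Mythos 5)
Your lower bound is exactly the paper's: monotonicity of $p \mapsto \bE_p[\#B_\mathrm{int}(v,r)]$ together with the critical lower bound $\bE_{p_c}[\#B_\mathrm{int}(v,r)] \succeq r$ of Kozma--Nachmias and Sapozhnikov, with the hypothesis $r \le (p-p_c)^{-1}$ playing no role.

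For the upper bound you take a genuinely different and much heavier route, and the ``main obstacle'' you flag is a real gap in your proposal. Your convolution-type bound from Russo plus BK/Reimer,
\[
\frac{d}{dp}\,\bE_p[\#B_\mathrm{int}(v,r)] \;\le\; C \sum_{a+b=r-1} \partial\!\Gr_p(a)\,\Gr_p(b),
\]
requires a uniform sphere estimate $\partial\!\Gr_p(a)\le C$ throughout the scaling window, not just at $p_c$. Since $\{u\in\partial B_\mathrm{int}(v,a)\}$ is not a monotone event, there is no Russo-type transfer of the critical sphere bound to $p>p_c$, and because $\bE_p\#\partial B_\mathrm{int}(v,a) = \bE_p\#B_\mathrm{int}(v,a)-\bE_p\#B_\mathrm{int}(v,a-1)$, the ball estimate $\Gr_p(a)\asymp a$ gives no pointwise control over the increments. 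You propose to extract the sphere bound from the finite-cluster tail estimates of the first paper, but that derivation is far from immediate and you do not supply it; as written, the bootstrap does not close.

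The paper avoids all of this with one clean observation about pivotals that you missed. The event $\{u\xleftrightarrow{r}v\}$ is increasing, and whenever it holds any open pivotal edge must lie on a witness path $\gamma$ of length at most $r$ (closing an open edge off $\gamma$ cannot destroy $\gamma$). Hence the number of open pivotals is at most $r$, and Russo's formula gives directly
\[
\frac{d}{dp}\,\bE_p\#B_\mathrm{int}(v,r) \;\le\; \frac{r}{p}\,\bE_p\#B_\mathrm{int}(v,r),
\]
which integrates to $\bE_p\#B_\mathrm{int}(v,r)\le e^{(p-p_c)r/p_c}\,\bE_{p_c}\#B_\mathrm{int}(v,r) \le C\,\bE_{p_c}\#B_\mathrm{int}(v,r)\asymp r$ for $r\le(p-p_c)^{-1}$. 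No BK decomposition, no sphere estimates, and no appeal to the companion paper are needed; the lemma is self-contained given the known critical ball estimate, and the scaling-window hypothesis enters only at the very last step as the condition under which the exponential prefactor is $O(1)$.
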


% We will deduce this lemma from the critical case of the same estimate as proven in \cite{MR2551766,sapozhnikov2010upper} via standard arguments similar to those appearing in [ref].e

The proof of this lemma will apply \emph{Russo's formula}, which expresses the derivative of the probability of an increasing event in terms of the expected number of \emph{pivotal edges}; see e.g.\ \cite[Chapter 2]{grimmett2010percolation} for background.  

\begin{proof}[Proof of \cref{lem:growth_in_scaling_window}]
For each $u,v\in V$ and $r\geq 1$, let $\{u \xleftrightarrow{r} v\}=\{u \in B_\mathrm{int}(v,r)\}$ be the event that there exists an open path of length at most $r$ connecting $u$ and $v$. Observe that there are always at most $r$ open pivotals for this event: Indeed, if this event holds and $\gamma$ is an open path of length at most $r$ connecting $v$ to $u$, then any open pivotal for the event must belong to $\gamma$.
% Notice that if $u$ is connected to $v$ by an open path of length at most $r$, then there are at most $r$ open pivotals for the event that 
% We have by Russo's formula that
As such, summing over $u\in V$ and applying Russo's formula yields that
\[
\frac{d}{dp} \bE_p\#B_\mathrm{int}(v,r) \leq \frac{r}{p} \bE_p\#B_\mathrm{int}(v,r)
\]
for every $r\geq 0$ and $p\in [0,1]$ and hence that
\[
\frac{d}{dp} \log \bE_p\#B_\mathrm{int}(v,r) \leq \frac{r}{p} 
\]
for every $r\geq 0$ and $p\in [0,1]$. Integrating this differential inequality yields that
\begin{equation}
\label{eq:Growth_Russo}
\log \bE_p\#B_\mathrm{int}(v,r) \leq \frac{(p-q)r}{q} + \log \bE_q\#B_\mathrm{int}(v,r)
\end{equation}
for every $r\geq 0$ and $0\leq q \leq p \leq 1$. When $q=p_c$ and $r \leq (p-p_c)^{-1}$ the first term is bounded and we deduce that $\bE_p\#B_\mathrm{int}(v,r) \asymp \bE_{p_c}\#B_\mathrm{int}(v,r)$. 
% \begin{equation}
% \label{eq:growth_in_window_proof}
% r \preceq \bE_{p_c}\#B_\mathrm{int}(v,r) \leq \bE_p\#B_\mathrm{int}(v,r) \preceq \bE_{p_c}\#B_\mathrm{int}(v,r) \preceq r,
% \end{equation}
The claim then follows from the fact that $\bE_{p_c}\#B_\mathrm{int}(v,r) \asymp r$ under  the triangle condition as  established in \cite{MR2551766,sapozhnikov2010upper}. \qedhere

% \medskip

%
\end{proof}

% We will deduce this proposition 
% This proposition is closely related to several existing results such as .

% Before proving this proposition, let us note that the following simple consequence.
% Note that this

% \begin{prop}
% \label{prop:positive_growth}
% Let $G$ be a connected, locally finite, quasi-transitive graph. If $G$ is nonamenable, then $\gamma_\mathrm{int}(p)>0$ if and only if $p>p_c$.
% \end{prop}

% \begin{proof}
% Every quasi-transitive graph has $\gamma_\mathrm{int}(p)=0$ for every $p \leq p_c$, see \cite[Lemma 1]{kozma2011percolation} and the proof of \cref{prop:growth_estimate} below. Thus, it suffices to prove that if $G$ is nonamenable then $\gamma_\mathrm{int}(p)>0$ for every $p>p_c$. 
% If $G$ is nonunimodular then it has $p_c<p_{2\to 2}$ by the results of \cite{Hutchcroftnonunimodularperc} and \cite{1804.10191}, so that $\gamma_\mathrm{int}(p)>0$ for every $p>p_c$ by \cref{prop:growth_estimate}. On the other hand, if $G$ is unimodular and $p>p_c$ then \cite[Theorem 1.1]{BLS99} and \cite[Theorem 8.13]{AL07} imply that every infinite cluster of $G$ has a nonamenable subgraph
% \end{proof}

% \begin{question}
% Does the conclusion of \cref{prop:positive_growth} hold if $G$ is amenable with exponential growth?
% \end{question}

\begin{proof}[Proof of \cref{prop:growth_estimate}]
% The upper bound $\gamma_\mathrm{int}(p) \preceq (p-p_c)$ follows immediately from \eqref{eq:Growth_Russo}. Indeed, 
We begin with the upper bound. It follows from the inequality \eqref{eq:Growth_Russo} that
\begin{equation}
\gamma_\mathrm{int}(p) \leq \frac{p-q}{q} + \gamma_\mathrm{int}(q)
\end{equation}
for every $0 < q < p \leq 1$. Since $\gamma_\mathrm{int}(q)=0$ for every $0<q<p_c$ by sharpness of the phase transition, it follows by taking the limit as $q \uparrow p_c$ that
\begin{equation}
\gamma_\mathrm{int}(p) \leq \frac{p-p_c}{p_c} 
\end{equation}
for every $p_c\leq p \leq 1$. Note that this inequality holds on every connected, locally finite, quasi-transitive graph; the resulting equality $\gamma_\mathrm{int}(p_c)=0$ was already observed to hold for every such graph by Kozma in \cite[Lemma 1]{kozma2011percolation}.

% If $q<p_c$ then $\lim_{r\to \infty}\bE_q\#B(v,r)$ is finite by sharpness of the phase transition, so that $\gamma_\mathrm{int}(q) =0$ for every $q <p_c$ and hence that
% \[
% \gamma_\mathrm{int}(p) \leq \inf_{0\leq q < p_c} \frac{p-q}{q} \leq \frac{p-p_c}{p_c}
% \]
% for every $p_c \leq p \leq 1$ as claimed.

\medskip

We now deduce the lower bound $\gamma_\mathrm{int}(p) \succeq p-p_c$ under the assumption that $p_c<p_{2\to 2}$ from the results of our earlier paper \cite{hutchcroft20192}, which contains both an \emph{extrinsic} version of the same estimate and tools to convert between intrinsic and extrinsic estimates. First, \cite[Corollary 4.3]{hutchcroft20192} gives that
\begin{equation}
\limsup_{\ell\to \infty} \frac{1}{\ell} \log \bE_p \left[ \# K_v \cap B(v,\ell) \right] \asymp p-p_c
\end{equation}
for every $v\in V$ and $p\geq p_c$.
(We only need the lower bound, which is the easier of the two estimates.)
For each $p\in [0,1]$ and $r\geq 0$ we define the matrix $C^\mathrm{int}_{p,r}\in [0,\infty]^{V^2}$ by
\[
C^\mathrm{int}_{p,r}(u,v) = \P_p\left( u \leftrightarrow v, d_\mathrm{int}(u,v)\geq r\right).
\]
The norm of this operator is bounded in \cite[Proposition 3.2]{hutchcroft20192}, which states that
\begin{equation}
\|C^\mathrm{int}_{p,r}\|_{2\to 2} \leq 3 \|T_p\|_{2\to 2} \exp\left[ -\frac{r}{e\|T_p\|_{2\to 2}}\right]
\end{equation}
for every $0 \leq p < p_{2\to 2}$ and $r\geq 0$. We can apply this estimate to deduce by Cauchy-Schwarz that
\begin{align*}
\bE_p \!\left[ \# B(v,\ell) \cap \left(K_v \setminus B_\mathrm{int}(v,r)\right)\right] &= \langle C_{p,r}^\mathrm{int} \mathbbm{1}_v, \mathbbm{1}_{B(v,\ell)} \rangle \leq \|C_{p,r}^\mathrm{int}\|_{2\to2} \|\mathbbm{1}_v\|_2 \| \mathbbm{1}_{B(v,\ell)}\|_2
\\&= \|C_{p,r}^\mathrm{int}\|_{2\to2} \sqrt{\#B(v,\ell)}\leq 3 \|T_p\|_{2\to 2} \exp\left[ -\frac{r}{e\|T_p\|_{2\to 2}}\right]\sqrt{\#B(v,\ell)}
\end{align*}
for every $v\in V$, $\ell,r\geq 0$, and $0 \leq p < p_{2\to2}$. 
Setting $\alpha_p = c/ \|T_p\|_{2\to 2}$ for an appropriately small constant $c$, setting $\ell=\lceil \alpha_p r \rceil $, and taking the limit as $r\to \infty$, we deduce that
\[
\limsup_{r\to \infty} \frac{1}{r} \log \bE_p \left[ \# B(v,\alpha_p r) \cap \left(K_v \setminus B_\mathrm{int}(v,r)\right)\right] < 0
\]
and hence that
\begin{multline*}
\gamma_\mathrm{int}(p) \geq \limsup_{r\to \infty} \frac{1}{r} \log \bE_p \left[ \# B(v,\alpha_p r) \cap B_\mathrm{int}(v,r) \right] \\= \limsup_{r\to \infty} \frac{1}{r} \log \bE_p \left[ \# B(v,\alpha_p r) \cap K_v \right] \asymp \alpha_p (p-p_c)
\end{multline*}
for every $p_c \leq p < p_{2\to 2}$.
 The claim follows since, by the $L^2$ boundedness condition, $\alpha_p$ is bounded away from zero on a neighbourhood of  $p_c$.
\end{proof}

% \subsection{Growth in the slightly supercritical regime}

% \begin{theorem}
% Let $G$ be a connected, locally finite, quasi-transitive graph, and suppose that $p_c<p_{2\to 2}$. Then 
% \[
% \Gr_p(r) \asymp r \exp\left[ \Theta\bigl((p-p_c)r\bigr)\right]
% \]
% for every $p_c \leq p \leq 1$ and $r\geq 1$.
% In particular, $\gamma_\mathrm{int}(p) \asymp (p-p_c)$ for every $p_c < p \leq 1$.
% \end{theorem}

\section{Subexponential corrections to growth in the $L^2$ regime}

% In this section we study the subexponential corrections to growth for slightly supercritical percolation, and more generally for percolation in the $L^2$-bounded supercritical regime $p_c<p<p_{2\to 2}$. We find that these subexponential corrections are bounded whenever $p_c<p<p_{2\to 2}$, and determine their precise order when $p$ is close to $p_c$. 
% Our first such theorem is as follows.

% % The first such theorem shows that, when $p_c<p_{2\to 2}$, the subexponential corrections to growth are bounded in slightly superc

% \begin{theorem}[Subexponential corrections near criticality]
% \label{thm:slightly_subexponential}
% Let $G$ be a connected, locally finite, quasi-transitive graph, and suppose that $p_c < p_{2\to 2}$. Then there exists a positive constant $\delta$ such that
% \[
% \gamma_\mathrm{int}(p) \asymp (p-p_c) \qquad \text{ and } \qquad \bE_p \left[ \# B(v,r) \right] \asymp \left(r  \wedge \frac{1}{p-p_c} \right) C_p e^{\gamma_\mathrm{int}(p) r} 
% \]
% for every $v\in V$, $r \geq 0$, and $p_c < p \leq p_c+\delta$.
% \end{theorem}

We now begin the proof of our results concerning subexponential corrections to growth for slightly supercritical percolation, \cref{thm:L2_subexponential,thm:slightly_subexponential}. Both theorems will be proven via essentially the same method, although the details required to prove \cref{thm:slightly_subexponential} are a little more involved. In fact we will prove a slightly more general version of \cref{thm:L2_subexponential} which may apply at $p_{2\to 2}$ in some examples. We begin by explaining how each of these results can be deduced from a certain generating function estimate which we then prove in \cref{subsec:generating_functions}.

\medskip

We first introduce some relevant definitions.
Recall that a locally finite quasi-transitive graph $G=(V,E)$ is said to satisfy the \textbf{open triangle condition} at $p$ if for every $\eps>0$ there exists $r$ such that $T_p^3(u,v) \leq \eps$ whenever $d(u,v)\geq r$. 
We say that $G$ satisfies the \textbf{modified open triangle condition} at $p$ if
\[
\lim_{k \to \infty} \sup_{v\in V} T_p^2 P^k T_p(v,v) =0,
\]
where $P$ is the transition matrix of simple random walk on $G$.
It is easily seen that any unimodular quasi-transitive graph satisfying the open triangle condition at $p$ also satisfies the modified open triangle condition at $p$. Moreover, we have by Cauchy-Schwarz that
\[
\sup_{v\in V} T_p^2 P^k T_p(v,v) \leq \|P\|_{2\to 2}^k \|T_p\|_{2\to 2}^3
\]
and hence that if $G$ is nonamenable then it satisfies the modified open triangle condition at every $0\leq p < p_{2\to2}$.
Let $[0,p_{2\to 2}) \subseteq I_\nabla \subseteq [0,1)$ be the set of $p$ for which the modified open triangle condition holds. (We believe it is possible to prove that $(0,p_{2\to 2}) \subseteq I_\nabla \subseteq (0,p_{2\to 2}]$ whenever $G$ is a connected, locally finite, quasi-transitive graph, but do not pursue this here.)

\medskip

The following proposition generalizes \cref{thm:L2_subexponential}.

\begin{prop}[Bounded subexponential corrections to growth under the modified open triangle condition]
\label{prop:L2_subexponential}
 Let $G$ be a connected, locally finite, quasi-transitive graph, and let $p>p_c$ be such that $p \in I_\nabla$. Then there exist positive constants $c_p$ and $C_p$ such that
\begin{equation}
\label{eq:L2_subexponential_annealed}
c_p e^{\gamma_\mathrm{int}(p) r} \leq \bE_p \left[ \# \partial B_\mathrm{int}(v,r) \right] \leq \bE_p \left[ \# B_\mathrm{int}(v,r) \right] \leq C_p e^{\gamma_\mathrm{int}(p) r}
\end{equation}
for every $v\in V$ and $r \geq 0$.
% Moreover, we have that 
% \begin{equation}
% \label{eq:L2_subexponential_quenched}
% 0< \liminf_{r\to \infty} e^{-\gamma_\mathrm{int}(p) r}  \# \partial B(v,r)  \leq \limsup_{r\to \infty} e^{-\gamma_\mathrm{int}(p) r}  \# B(v,r)  < \infty
% \end{equation}
% almost surely on the event that $v$ belongs to an infinite cluster for each $v\in V$.
\end{prop}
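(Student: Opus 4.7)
The plan is to reduce \cref{prop:L2_subexponential} to a generating-function estimate at the critical fugacity, and then prove that estimate using the modified open triangle condition. Define the intrinsic two-point generating function
\[
\tau_p^{\mathrm{int}}(u, v; z) = \sum_{r \ge 0} \P_p\!\bigl(d_\mathrm{int}(u,v) = r\bigr)\, z^r, \qquad 0 \le z < e^{-\gamma_\mathrm{int}(p)},
\]
and write $\hat B_p(v; z) := \sum_u \tau_p^{\mathrm{int}}(u,v;z)/(1-z) = \sum_r \bE_p[\#B_\mathrm{int}(v,r)]\, z^r$. The proposition would follow from the key simple-pole estimate
\[
(1 - z e^{\gamma_\mathrm{int}(p)}) \, \hat B_p(v; z) \le C_p
\qquad \text{uniformly in } v \in V \text{ and } z \in [0, e^{-\gamma_\mathrm{int}(p)}),
\]
since the coefficient sequence $r \mapsto \bE_p[\#B_\mathrm{int}(v,r)]$ is non-decreasing and increases by at most the maximum degree between consecutive radii, so a standard Abelian argument converts a simple-pole bound at the critical fugacity into the pointwise upper bound $\bE_p[\#B_\mathrm{int}(v,r)] \le C'_p e^{\gamma_\mathrm{int}(p) r}$.

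To prove the generating-function estimate I would derive a self-bounding operator inequality for $\tau_p^{\mathrm{int}}(\,\cdot\,; z)$ on $\ell^2(V)$ by applying Reimer's inequality to an intrinsic geodesic from $u$ to $v$ split at an interior vertex $w$ --- the two halves are edge-disjoint, so Reimer yields a convolutional upper bound on $\tau_p^{\mathrm{int}}(u,v;z)$. The obstruction to a direct approach is that summing over all valid midpoints on a geodesic of length $n$ overcounts each geodesic by a factor of $n+1$, which after generating-function summing would degrade a simple pole into a double pole. To absorb this overcounting I would interpose $k$ simple-random-walk steps at the midpoint, producing a bound of the schematic form
\[
\tau_p^{\mathrm{int}}(\,\cdot\,; z) \,\preceq\, A_{p, k, z} \,+\, \tau_p^{\mathrm{int}}(\,\cdot\,; z)\, P^k\, \tau_p^{\mathrm{int}}(\,\cdot\,; z)
\]
as operators, where $A_{p, k, z}$ accounts for short paths (length $\le k$) and remains bounded as $z \uparrow e^{-\gamma_\mathrm{int}(p)}$. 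The $\ell^2$-operator norm of the right-hand correction is then controlled, via Cauchy--Schwarz, in terms of $\sup_v T_p^2 P^k T_p(v, v)$, which tends to $0$ as $k \to \infty$ by the assumption $p \in I_\nabla$. Choosing $k$ large enough --- depending on $p$ but not on $z$ --- closes the bootstrap and yields the simple-pole bound on $\hat B_p(v; z)$.

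With the upper bound $\bE_p[\#B_\mathrm{int}(v,r)] \le C_p e^{\gamma_\mathrm{int}(p) r}$ in hand, the complementary lower bound $\bE_p[\#\partial B_\mathrm{int}(v,r)] \ge c_p e^{\gamma_\mathrm{int}(p) r}$ comes by combining it with the Fekete-type lower bound $\bE_p[\#B_\mathrm{int}(v,r)] \ge e^{\gamma_\mathrm{int}(p) r}$ of \eqref{eq:gamma_def}. Writing $b_p(v,r) = B_p(v,r) - B_p(v,r-1)$ and telescoping over a window $[r - s, r]$ with $s = O(1/\gamma_\mathrm{int}(p))$ chosen so that $B_p(v, r-s) \le \tfrac{1}{2} B_p(v, r)$, a pigeonhole produces at least one $r^* \in [r-s, r]$ with $b_p(v, r^*) \succeq e^{\gamma_\mathrm{int}(p) r}$. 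The refined submultiplicativity \eqref{eq:Gr_submult_strong} applied with a short extension then transfers this lower bound from $r^*$ to every $r$, and quasi-transitivity absorbs the dependence on $v$ into the constant $c_p$.

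The main obstacle I anticipate is the operator bootstrap in the second paragraph: choosing the random-walk bridge so that the closure of the self-bounding inequality actually uses the modified open triangle condition in the form stated, while carefully avoiding the introduction of extraneous factors of $(1 - z e^{\gamma_\mathrm{int}(p)})^{-1}$ in the ``short-path'' term $A_{p, k, z}$ that would spoil the order of the pole. In particular, one needs the short-path term to be handled using only $\ell^2$-bounds on $T_p$ that remain finite in a neighbourhood of the critical fugacity, which is exactly the setting in which the modified open triangle condition is available.
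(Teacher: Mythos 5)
Your high-level plan mirrors the paper's, but two of the key steps are missing or wrong as stated. The claimed ``standard Abelian argument'' does not exist: a simple-pole bound $(1-ze^{\gamma})\hat B_p(v;z)\leq C_p$, together with monotonicity of $a_r=\bE_p[\#B_\mathrm{int}(v,r)]$ and the fact that consecutive terms grow by a bounded multiplicative factor, does \emph{not} imply $a_r\preceq e^{\gamma r}$. One can build a monotone sequence $a_r=b_r e^{\gamma r}$ where $b_r$ rises and falls geometrically (at rates respecting both constraints) around sparse locations $r_k=2^k$ with peaks $b_{r_k}=k\to\infty$; the pole bound only enforces a Ces\`aro-type average bound on $b_r$, which this sequence satisfies even though $\sup_r b_r=\infty$. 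The paper's \cref{lem:Tauberian} is genuinely Tauberian rather than Abelian: it exploits the submultiplicative inequality $\Gr_p(r+\ell)\leq\Gr_p(r)\Gr_p(\ell)$ coming from Reimer's inequality \eqref{eq:Gr_submult}, combined with a Cauchy--Schwarz pigeonhole, to produce the recursion $\Gr_p(r)\leq\Gr_p(\lfloor r/2\rfloor)+O(\alpha_p^{-r})$, which iterates to the desired bound. That extra algebraic input is essential and your proposal omits it.

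The proposed self-bounding operator inequality $\tau\preceq A_{p,k,z}+\tau P^k\tau$ also does not close. The quantity you must bound, $\hat B_p(v;z)$, is a row/column sum of $\tau$, not an $\ell^2\to\ell^2$ operator norm, and on a nonamenable graph the latter does not control the former (e.g.\ $\|P\|_{2\to2}<1$ while $\sum_u P(u,v)=1$). And if you instead sum the pointwise inequality over $u$, the quadratic term $\sum_u(\tau P^k\tau)(u,v)$ reduces to $(\sup_w\hat B_p(w;z))^2$ up to constants because $P^k$ is stochastic, so there is no small factor to close the bootstrap. The paper's \cref{lem:ultraviolet} and \cref{prop:alpha_dif_ineq} work in the opposite direction: they \emph{lower}-bound the $\alpha$-derivative of the generating function by $\eta(p,\alpha)\sG^2$, using the random-walk bridge and the modified open triangle condition to keep $\eta$ bounded away from zero, and then integrate $\frac{\partial}{\partial\alpha}\sG^{-1}\leq-\eta$ from $\alpha$ to $\alpha_p$ to get the pole bound. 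That is a differential inequality, not a self-bounding algebraic one, and it is the mechanism that actually closes. (Your lower-bound paragraph is closer to correct, though the ``transfer from $r^*$ to $r$'' is not a direct consequence of \eqref{eq:Gr_submult_strong}; the paper instead proves a sphere version of Fekete's lemma directly, \cref{lem:better_Fekete}.)
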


% \begin{remark}

% \end{remark}

% Note that we do \emph{not} expect these results to extend to the entire supercritical phase, even under the assumption of nonamenability. Indeed, the product $T \times \Z^d$ has  $\#B(0,r) = \Theta(r^d e^{\gamma r})$ for appropriate choice of $\gamma$, and it seems plausible that this $r^d$ subexponential correction to growth should also be present in percolation on this graph with $p$ close to $1$. (It may be easier to analyze similar problems for free products, where it should also be possible to get unbounded subexponential corrections for $p$ close to $1$.) It is therefore an interesting and non-trivial fact that, in our setting, the subexponential corrections to growth are bounded when $p$ is supercritical but not too close to $1$.

% \subsection{A generating function approach}

The upper bounds of both \cref{thm:slightly_subexponential} and \cref{prop:L2_subexponential} will be proven by analysis of the generating function $\sG(p,\alpha,u)$ defined by
\[
\sG(p,\alpha,u) = \bE_p \left[\sum_{v \in K_u} \alpha^{d_\mathrm{int}(u,v)} \right] = \sum_{r\geq0} \alpha^r \bE_p\left[\#\partial B_\mathrm{int}(u,r)\right]
\]
for each $p,\alpha \in [0,1]$ and $u \in V$.
Note that if $\alpha<1$ then we can equivalently write
\begin{equation}
\label{eq:sG_by_parts}
\sG(p,\alpha,u)  = (1-\alpha) \sum_{r\geq0} \alpha^r \bE_p\left[\# B_\mathrm{int}(u,r)\right]
\end{equation}
for each $p,\alpha \in [0,1)$, and $u \in V$.
We also write \[\sG_*(p,\alpha)=\inf_{u\in V} \sG(p,\alpha,u)\qquad \text{ and } \qquad \sG^*(p,\alpha)=\sup_{u\in V} \sG(p,\alpha,u)\] for each $p \in [0,1]$ and $\alpha \in [0,1]$. An easy FKG argument yields that if $G$ is connected and quasi-transitive then there exists a constant $C$ such that
\begin{equation}
\label{eq:sGinfsup}
(p \alpha)^{C} \sG^*(p,\alpha) \leq \sG(p,\alpha,u) \leq (p\alpha)^{-C} \sG_*(p,\alpha)
\end{equation}
for every $p,\alpha\in (0,1]$ and $u \in V$.
 It follows from \eqref{eq:gamma_def} that if $G$ is a connected, locally finite, quasi-transitive graph and $p_c < p \leq 1$, then $\sG(p,\alpha,u)<\infty$ if and only if $\gamma_\mathrm{int}(p) < -\log \alpha$. For each $\alpha \geq 0$, we define $p_\alpha = \sup \{ p \in [0,1] : \sG(p,\alpha,u)<\infty $ for every $u\in V \}=\sup \{ p \in [0,1] : \gamma_\mathrm{int}(p)<-\log \alpha \}$. Similarly, for each $0\leq p \leq 1$ we define $\alpha_p$ to be supremal so that $\sG^*(p,\alpha)<\infty$, so that $\alpha_p=e^{-\gamma_\mathrm{int}(p)}$ for $p_c\leq p \leq 1$.

\medskip

% Our main result on this generating function is stated as follows.
We now state our main result regarding this generating function.

\begin{prop}
\label{prop:Gpalpha_estimate}
Let $G$ be a connected, locally finite, quasi-transitive graph. There exists a continuous function $\kappa : I_\nabla \to (0,\infty)$ such that
\[
\sG^*(p,\alpha) \leq \frac{\kappa(p)}{\alpha_p-\alpha}
\]
for every $p \in I_\nabla$ with $p\geq p_c$ and every $\alpha < 1 \wedge \alpha_p$.
\end{prop}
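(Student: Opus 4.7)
My plan is to derive the bound via a BK-Reimer-based functional inequality for $\sG$ which is then closed using the $\ell^2$-operator estimates available under the modified open triangle condition.

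I would start from a BK-Reimer inequality in the spirit of \eqref{eq:Gr_submult_strong} applied to the boundary volume: for each $u \in V$ and $r,s\geq 0$,
\[
\bE_p\bigl[\#\partial B_{\mathrm{int}}(u,r+s)\bigr] \leq \sum_w \pi_r(u,w)\,\bE_p\bigl[\#\partial B_{\mathrm{int}}(w,s)\bigr],
\]
where $\pi_r(u,w)=\bP_p(d_{\mathrm{int}}(u,w)=r)$. This follows by considering, for each $v$ with $d_{\mathrm{int}}(u,v)=r+s$, the vertex at position $r$ on a canonical shortest open path, and applying BK to the two edge-disjoint subpaths. Multiplying by $\alpha^{r+s}$, summing over $s\geq 0$, and splitting off the terms with intrinsic distance $<r$ from $u$ yields the recursion
\[
\sG(p,\alpha,u) \;\leq\; r\,\mathrm{Gr}_p(r) \;+\; \alpha^r \sum_w \pi_r(u,w)\,\sG(p,\alpha,w),
\]
valid for every $r\geq 1$.

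The crude estimate $\sum_w \pi_r(u,w)\sG(p,\alpha,w) \leq \sG^*(p,\alpha)\,\bE_p[\#\partial B_{\mathrm{int}}(u,r)] \leq \sG^*(p,\alpha)\mathrm{Gr}_p(r)$ converts this into
$\sG^*(p,\alpha)\bigl(1-\alpha^r \mathrm{Gr}_p(r)\bigr) \leq r\,\mathrm{Gr}_p(r)$,
which gives finiteness but not the sharp simple-pole form: since $\alpha^r\mathrm{Gr}_p(r) = (\alpha/\alpha_p)^r e^{h_p(r)}$, any growth of the subexponential correction $h_p$ destroys the right constant. To sharpen the recursion I would replace this crude bound by a Cauchy-Schwarz-type argument that uses the operator-norm estimate $\|C^{\mathrm{int}}_{p,r}\|_{2\to 2} \leq 3\|T_p\|_{2\to 2}\exp[-r/(e\|T_p\|_{2\to 2})]$ from \cite[Proposition 3.2]{hutchcroft20192}, together with an inserted simple-random-walk step that reveals a triangle of the form $T_p^2 P^k T_p$; the modified open triangle condition guarantees that this quantity can be made arbitrarily small by choosing $k=k(p)$ large. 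Decomposing the convolution into a near-diagonal piece (controlled by a bounded-sized intrinsic ball whose contribution is absorbed into the prefactor $\kappa(p)$) and a far piece (controlled via the operator norm decay) should allow one to close the recursion in the form $\sG^*(p,\alpha)\bigl(1 - (\alpha/\alpha_p)^r[1+o_r(1)]\bigr) \leq C(p)$, and then choosing $r \asymp 1/(\alpha_p-\alpha)$ yields the desired bound $\sG^*(p,\alpha) \leq \kappa(p)/(\alpha_p-\alpha)$.

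The main obstacle is precisely this refined convolution bound: the modified open triangle condition provides $\ell^2\to\ell^2$ control, while $\sG^*$ is an $\ell^\infty$ quantity, and bridging the two without losing a factor of $(\alpha_p-\alpha)^{-\varepsilon}$ requires the splitting scale $k=k(p)$ to be chosen uniformly in $\alpha$. A likely fallback is a bootstrap: first obtain a weaker $\kappa_0(p)/(\alpha_p-\alpha)^{1+\varepsilon}$ bound from a less careful argument, then use it to close the sharp recursion. Finally, continuity of $\kappa$ on $I_\nabla$ follows from the continuity of $p\mapsto \alpha_p = e^{-\gamma_{\mathrm{int}}(p)}$ (standard from the submultiplicativity of $\mathrm{Gr}_p$ and the results of \cref{prop:growth_estimate}), of $p\mapsto \|T_p\|_{2\to 2}$, and of the quantities $p\mapsto \sup_v T_p^2 P^k T_p(v,v)$ for each fixed $k$, together with the local uniformity of the modified open triangle condition on $I_\nabla$.
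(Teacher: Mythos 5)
Your proposal takes a genuinely different route from the paper: you attempt to iterate a supermultiplicative recursion in the intrinsic radius $r$, whereas the paper proves a differential inequality in $\alpha$, namely $\frac{\partial}{\partial \alpha}\sG(p,\alpha,u) \geq \eta(p,\alpha)\sG(p,\alpha,u)^2$ (\cref{prop:alpha_dif_ineq}), and then integrates $\frac{\partial}{\partial\alpha}\sG^{-1} \leq -\eta$ from $\alpha$ up to the singularity at $\alpha_p$ where $\sG^{-1}$ vanishes. The integration produces the simple pole $\sG^*(p,\alpha)\leq\kappa(p)/(\alpha_p-\alpha)$ automatically, with no reference whatsoever to the subexponential correction $e^{h_p(r)}$. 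This is the crucial structural advantage of the differential-inequality route that your recursion strategy cannot replicate.

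There is a genuine gap in your proposed closing step, and I believe it is not patchable within your framework. You aim for a recursion of the form $\sG^*(p,\alpha)\bigl(1-(\alpha/\alpha_p)^r[1+o_r(1)]\bigr)\leq C(p)$. If $C(p)$ really is independent of $r$, then taking $r\to\infty$ yields $\sG^*(p,\alpha)\leq C(p)$ uniformly in $\alpha<\alpha_p$, which is impossible since $\sG^*(p,\alpha)\uparrow\infty$ as $\alpha\uparrow\alpha_p$. So the additive error must grow with $r$, and indeed in your actual recursion it is $r\,\Gr_p(r)=r\,\alpha_p^{-r}e^{h_p(r)}$, whence the bound after choosing $r\asymp 1/(\alpha_p-\alpha)$ is off by the factor $e^{h_p(r)}$. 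Controlling this factor is equivalent to proving that the subexponential corrections are bounded — which is \cref{thm:L2_subexponential} itself, a statement the paper \emph{deduces from} \cref{prop:Gpalpha_estimate}, not the other way around. Your proposal is therefore circular at exactly the point you identify as the "main obstacle." The fallback you sketch (first prove a $(\alpha_p-\alpha)^{-1-\varepsilon}$ bound, then bootstrap) does not escape this: any bound with a higher-order pole, when fed back into the $\ell\leq r$ tail of $\sG$, still produces an $r$-dependent error rather than the clean simple pole. There is also a secondary concern that the sphere-level supermultiplicative inequality $\bE_p[\#\partial B_{\mathrm{int}}(u,r+s)]\leq\sum_w\pi_r(u,w)\bE_p[\#\partial B_{\mathrm{int}}(w,s)]$ you start from is not quite what Reimer's inequality gives (the events $\{d_{\mathrm{int}}(u,w)=r\}$ are not increasing; the paper's \eqref{eq:Gr_submult_strong} bounds balls, not spheres, and handles the boundary carefully), though this is fixable. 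The fundamental issue is the choice of recursion-in-$r$ over differential-inequality-in-$\alpha$; the latter is what the Aizenman--Newman style of argument in the paper uses precisely to avoid any dependence on unknown subexponential factors.
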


Note in particular that the constant $\kappa(p)$ is bounded in a neighbourhood of $p_c$ when $p_c<p_{2\to 2}$.
We will first show how \cref{thm:slightly_subexponential,prop:L2_subexponential} can be deduced from \cref{prop:Gpalpha_estimate} in \cref{subsec:growth_deduction} before proving 
 \cref{prop:growth_estimate} in \cref{subsec:generating_functions}. 

\subsection{Deduction of \cref{thm:slightly_subexponential,prop:L2_subexponential} from \cref{prop:Gpalpha_estimate}}
\label{subsec:growth_deduction}

In this section we show how \cref{prop:Gpalpha_estimate} can be used to prove \cref{thm:slightly_subexponential} and \cref{prop:L2_subexponential}.
We will apply the following ``Tauberian theorem" that lets us extract pointwise estimates on the growth from the exponentially averaged estimates provided by \cref{prop:Gpalpha_estimate}. The resulting lemma also relies on the submultiplicative-type estimate of \eqref{eq:Gr_submult}  and is similar in spirit to the submultiplicative Tauberian theorem of \cite[Lemma 3.4]{1709.10515}. We will apply this lemma with $\alpha = e^{-1/r} \alpha_p$, so that $\alpha^{-r}\asymp \alpha_p^{-r}$ and $\sG^*(p,\alpha) \preceq r$ by \cref{prop:Gpalpha_estimate}.

\begin{lemma}
\label{lem:Tauberian}
Let $G$ be a connected, quasi-transitive graph with $C$ vertex orbits. Then the inequality
\[
\Gr_p(r) \leq \Gr_p(\lfloor r/2\rfloor)+\frac{4C^2\sG^*(p,\alpha)^2}{r^2(1-\alpha)} \cdot \alpha^{-r} 
\]
holds for every $v\in V$, $r\geq 1$, and $0<\alpha<1$.
\end{lemma}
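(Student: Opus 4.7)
The plan is to combine the Reimer-based submultiplicative inequality \eqref{eq:Gr_submult_strong} with a pigeonhole/averaging argument against the generating function $\sG(p,\alpha,\cdot)$. For each $v\in V$ and each $s\in\{1,\ldots,\lfloor r/2\rfloor+1\}$, \eqref{eq:Gr_submult_strong} applied with radii $s$ and $r-s$ yields
\[
\bE_p[\#B_{\mathrm{int}}(v,r)] \leq \bE_p[\#B_{\mathrm{int}}(v,s-1)] + \bE_p[\#\partial B_{\mathrm{int}}(v,s)]\,\Gr_p(r-s),
\]
and the first term is always $\leq \Gr_p(\lfloor r/2\rfloor)$ by the obvious monotonicity of $\Gr_p$. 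So the task reduces to bounding the product in the second term for a well-chosen $s$, and then taking the supremum over $v$.

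To produce the claimed $r^{-2}$ factor, the key is to exploit the generating function bound $\sG(p,\alpha,v)\leq \sG^*(p,\alpha)$ twice in genuinely independent ways. The first use is an immediate pigeonhole: since $\sum_s \alpha^s\bE_p[\#\partial B_{\mathrm{int}}(v,s)] = \sG(p,\alpha,v)\leq \sG^*(p,\alpha)$ and our range of $s$ has size $\geq r/2$, there is some $s^*$ with $\alpha^{s^*}\bE_p[\#\partial B_{\mathrm{int}}(v,s^*)] \preceq \sG^*(p,\alpha)/r$. The second comes by iterating \eqref{eq:Gr_submult_strong}: decomposing $\Gr_p(r-s)$ through a further intermediate vertex introduces a second, independent intermediate distance $t$, and quasi-transitivity together with \eqref{eq:sG_by_parts} applied to the $C$ orbit representatives $v_1,\ldots,v_C$ gives
\[
\sum_\ell \alpha^\ell \Gr_p(\ell) \leq \frac{C\,\sG^*(p,\alpha)}{1-\alpha}.
\]
A second pigeonhole over $t$, again on a window of size $\geq r/2$, then produces some $t^*$ with $\alpha^{t^*}\Gr'_p(t^*)\preceq C\sG^*(p,\alpha)/(r(1-\alpha))$, where $\Gr'_p(t)=\sup_w \bE_p[\#\partial B_{\mathrm{int}}(w,t)]$. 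Multiplying the two pigeonholed bounds, the two factors of $1/r$ combine to $1/r^2$ and the $C$ from quasi-transitivity squares to $C^2$, while the remaining $\Gr_p(r-s^*-t^*)$ factor is estimated by the uniform bound $\Gr_p(\cdot)\leq \sG^*(p,\alpha)/((1-\alpha)\alpha^{\cdot})$. The surviving powers of $\alpha$ combine cleanly to $\alpha^{-r}$, and the first-term contribution is absorbed into the $\Gr_p(\lfloor r/2\rfloor)$ on the right-hand side.

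The main obstacle is precisely the compatibility of the two pigeonholes: with a single application of \eqref{eq:Gr_submult_strong} the indices $s^*$ and $r-\ell^*$ chosen by two separate averagings are forced to coincide through the rigid constraint $s^*+\ell^*=r$, and a naive argument therefore loses a factor of $r$ (and one factor of $C$) relative to the target. The two-intermediate-vertex iteration of \eqref{eq:Gr_submult_strong} breaks this constraint by introducing a second genuinely independent averaging parameter, and this decoupling is exactly what produces the sharper $r^{-2}$ and $C^2$ constants needed for the downstream application of the lemma with $\alpha=e^{-1/r}\alpha_p$. Once the decoupling is arranged, the rest of the proof is routine bookkeeping of the powers of $\alpha$ and $(1-\alpha)^{-1}$.
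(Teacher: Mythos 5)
Your approach has a genuine gap: the iterated decomposition inescapably introduces a \emph{third} factor of $\sG^*(p,\alpha)$, so the final bound is of the form $\sG^*(p,\alpha)^3/(r^2(1-\alpha))\cdot\alpha^{-r}$ rather than the claimed $\sG^*(p,\alpha)^2/(r^2(1-\alpha))\cdot\alpha^{-r}$. Concretely, after your two decompositions the product to control is
\[
\bE_p\left[\#\partial B_\mathrm{int}(v,s^*)\right]\cdot \sup_w\bE_p\left[\#\partial B_\mathrm{int}(w,t^*)\right]\cdot\Gr_p(r-s^*-t^*).
\]
Each of your two pigeonholes bounds one of the first two factors by (a constant times) $\sG^*(p,\alpha)/(r\alpha^{s^*})$ and $C\sG^*(p,\alpha)/(r\alpha^{t^*})$ respectively, and you bound the third by the uniform bound $\Gr_p(k)\preceq \sG^*(p,\alpha)/((1-\alpha)\alpha^k)$ --- that is three factors of $\sG^*(p,\alpha)$, not two. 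This is fatal for the downstream application: the lemma is applied with $\alpha=e^{-1/r}\alpha_p$, at which point \cref{prop:Gpalpha_estimate} gives $\sG^*(p,\alpha)\asymp r$, and the $1/r^2$ prefactor is exactly what is needed to cancel $\sG^*(p,\alpha)^2\asymp r^2$; a third power $\sG^*(p,\alpha)^3\asymp r^3$ leaves an uncancelled factor of $r$ and the argument collapses. (Two minor errors in the same direction: your second pigeonhole claims a bound $C\sG^*(p,\alpha)/(r(1-\alpha))$ for $\alpha^{t^*}\Gr'_p(t^*)$, but since $\Gr'_p$ is a \emph{sphere} quantity the correct bound is $C\sG^*(p,\alpha)/r$ with no $(1-\alpha)$; and your uniform bound on $\Gr_p$ should carry the factor $C$.)

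You correctly identified the rigidity problem --- a single pigeonhole over $s$ with $\Gr_p(r-s)$ handled by the uniform bound gives only $\sG^*(p,\alpha)^2/(r(1-\alpha))\cdot\alpha^{-r}$, off by a factor of $r$ --- but iterating \eqref{eq:Gr_submult_strong} is not the right fix, because every application of that inequality spends one more factor of $\sG^*$ on the residual $\Gr_p$. The paper's proof instead keeps a \emph{single} application of \eqref{eq:Gr_submult} and extracts the extra $1/r$ by pigeonholing on the \emph{geometric mean}: by Cauchy--Schwarz,
\[
\sum_{\ell=0}^{r}\sqrt{\alpha^r\,\bE_p\!\left[\#B_\mathrm{int}(u,\ell)\right]\bE_p\!\left[\#\partial B_\mathrm{int}(v,r-\ell)\right]}
\;\leq\;\sqrt{\sum_\ell\alpha^\ell\bE_p\!\left[\#B_\mathrm{int}(u,\ell)\right]}\;\sqrt{\sum_\ell\alpha^{r-\ell}\bE_p\!\left[\#\partial B_\mathrm{int}(v,r-\ell)\right]}
\;\leq\;\frac{\sG^*(p,\alpha)}{\sqrt{1-\alpha}},
\]
and summing over the $C$ orbit representatives $u\in\cO$ then pigeonholing over $\ell\in[r/2,r]$ gives an $\ell$ for which the summand is $\leq 2C\sG^*(p,\alpha)/(r\sqrt{1-\alpha})$. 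Squaring yields $\Gr_p(\ell)\,\bE_p[\#\partial B_\mathrm{int}(v,r-\ell)]\leq 4C^2\sG^*(p,\alpha)^2\alpha^{-r}/(r^2(1-\alpha))$ with only two factors of $\sG^*$, and substituting into \eqref{eq:Gr_submult_strong} finishes. The square-root trick is precisely what lets a single pigeonhole act on both factors simultaneously and thereby deliver the $1/r^2$ without spending a third $\sG^*$.
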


\begin{proof}[Proof of \cref{lem:Tauberian}]
For each $u,v\in V$, we have by Cauchy-Schwarz that
\begin{multline*}
\sum_{\ell=0}^{r} \sqrt{\alpha^r \bE_p\left[\# B_\mathrm{int}(u,\ell) \right] \bE_p\left[\# \partial B_\mathrm{int}(v,r-\ell) \right]} \\\leq \sqrt{\sum_{\ell=0}^{r} \alpha^\ell \bE_p\left[\# B_\mathrm{int}(u,\ell) \right]}\sqrt{\sum_{\ell=0}^{r} \alpha^{r-\ell} \bE_p\left[\# \partial B_\mathrm{int}(v,r-\ell) \right]}
\leq \frac{1}{\sqrt{1-\alpha}} \cdot \sG^*(p,\alpha),
\end{multline*}
for each $r\geq 1$ and $\alpha>0$, where we used \eqref{eq:sG_by_parts} in the final inequality. Letting $\cO$ be a complete set of orbit representatives for the action of $\Aut(G)$ on $V$, so that $|\cO|=C$, it follows that
\[
\sum_{u\in \cO} \sum_{\ell=0}^{r} \sqrt{\alpha^r \bE_p\left[\# B_\mathrm{int}(u,\ell) \right] \bE_p\left[\# \partial B_\mathrm{int}(v,r-\ell) \right]} \leq \frac{C}{\sqrt{1-\alpha}} \cdot \sG^*(p,\alpha).
\]
Thus, for each $r\geq 1$ there exists an integer $r/2 \leq \ell \leq r$ such that
\[
\sup_{u\in \cO}\sqrt{\alpha^r \bE_p\left[\# B_\mathrm{int}(u,\ell) \right] \bE_p\left[\# \partial B_\mathrm{int}(v,r-\ell) \right]} \leq \frac{2C}{r\sqrt{1-\alpha}} \cdot \sG^*(p,\alpha).
\]
% and hence that
% \[
% \bE_p\left[\# B_\mathrm{int}(v,\ell) \right] \bE_p\left[\# \partial B_\mathrm{int}(v,r-\ell) \right] \leq \frac{4}{r^2(1-\alpha)} \cdot \alpha^{-r} \sG^*(p,\alpha)^2.
% \]
Applying \eqref{eq:Gr_submult} with this choice of $\ell$ it follows that
% 
 % there exists a constant $C$ such that for each $r\geq (p-p_c)^{-1}/2$ there exists $\ell \geq r/2$ such that
% \[
%  \bE_p\left[\# B_\mathrm{int}(v,\ell) \right] \bE_p\left[\# \partial B_\mathrm{int}(v,r-\ell) \right] \leq \frac{C}{p-p_c} e^{\gamma_\mathrm{int}(p)r}  
% \]
% and hence that
\begin{align*}
\bE_p\left[\# B_\mathrm{int}(v,r)\right] &\leq \bE_p\left[\# B_\mathrm{int}(v,r-\ell) \right]+ \sup_{u\in V}\bE_p\left[\# B_\mathrm{int}(u,\ell) \right] \bE_p\left[\# \partial B_\mathrm{int}(v,r-\ell) \right]
\\
&\leq \bE_p\left[\# B_\mathrm{int}(v,\lfloor r/2\rfloor)\right]+\frac{4C^2 \sG^*(p,\alpha)^2}{r^2(1-\alpha)} \cdot \alpha^{-r} 
\end{align*}
for every $r\geq 1$ and $\alpha>0$ as claimed.
\end{proof}

The proof will also apply the following refinement of Fekete's lemma, 
which lets us relate $\gamma_\mathrm{int}(p)$ directly to the expected size of a \emph{sphere} (rather than to a ball)  when it is positive.
This lemma will be used to establish the lower bounds of both \cref{thm:slightly_subexponential} and \cref{prop:L2_subexponential}.

\begin{lemma}
\label{lem:better_Fekete}
Let $G$ be a connected, locally finite, quasi-transitive graph. There exists a positive constant $C \geq 1$ such that 
\[
\gamma_\mathrm{int}(p)  = \inf_{r\geq 1} \inf_{v\in V} \frac{1}{r}  \log \left(C p^{-C} \bE_p\left[ \# \partial B_\mathrm{int}(v,r)\right] \right)
= \lim_{r \to\infty} \inf_{v\in V} \frac{1}{r} \log  \bE_p\left[ \# \partial B_\mathrm{int}(v,r)\right]
 \]
for every $p \geq p_c$.
% \in [0,1]$ such that $\gamma_\mathrm{int}(p)>0$. 
In particular, the limit on the right exists for every $p \geq p_c$.
\end{lemma}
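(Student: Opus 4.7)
The first equality is equivalent to the pointwise sphere lower bound
\[
\bE_p[\#\partial B_\mathrm{int}(v, r)] \geq \tfrac{p^C}{C}\, e^{\gamma_\mathrm{int}(p)\, r}
\]
for every $r \geq 1$, $v \in V$, and $p \geq p_c$, together with the fact that the infimum is approached along $r\to\infty$, which follows from the trivial upper bound $\bE_p[\#\partial B_\mathrm{int}(v, r)] \leq \Gr_p(r) = e^{\gamma_\mathrm{int}(p)\, r + o(r)}$. The second equality is then an immediate consequence, since the lower bound gives $\liminf_r \inf_v \tfrac{1}{r} \log \bE_p[\#\partial B_\mathrm{int}(v, r)] \geq \gamma_\mathrm{int}(p)$ while the trivial upper bound yields the matching $\limsup \leq \gamma_\mathrm{int}(p)$.

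\textbf{Plan for the sphere lower bound.} Two tools would be combined. First, quasi-transitivity produces a constant $D \geq 1$ such that every vertex $v \in V$ lies within graph distance at most $D$ of every $\Aut(G)$-orbit; fixing a shortest path of length $\leq D$ from $v$ to a representative $u$ that attains $\Gr_p(r) = \bE_p[\#B_\mathrm{int}(u, r)]$ and applying the FKG inequality to the event that this path is open yields
\[
\bE_p[\#B_\mathrm{int}(v, r + D)] \geq p^D\, \Gr_p(r) \geq p^D\, e^{\gamma_\mathrm{int}(p)\, r}
\]
for every $v \in V$ and $r \geq 0$, where the last step uses Fekete's lemma applied to the submultiplicative sequence $\Gr_p$. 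Second, the refined submultiplicative inequality \eqref{eq:Gr_submult_strong} with $\ell = D$ gives
\[
\bE_p[\#B_\mathrm{int}(v, r + D)] \leq \bE_p[\#B_\mathrm{int}(v, r - 1)] + \bE_p[\#\partial B_\mathrm{int}(v, r)]\, \Gr_p(D).
\]
Combining and rearranging yields $\bE_p[\#\partial B_\mathrm{int}(v, r)] \geq (p^D\, \Gr_p(r) - \Gr_p(r - 1))/\Gr_p(D)$.

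\textbf{Main obstacle.} The main difficulty is that the numerator in the resulting bound can be small or negative at specific radii where $\Gr_p(r)/\Gr_p(r - 1)$ happens to fall short of $p^{-D}$. I would resolve this by a window-averaging variant: apply the two-stage argument to a short window $[r, r + K]$ instead of a single radius, deducing $\sum_{k = 0}^{K - 1} \bE_p[\#\partial B_\mathrm{int}(v, r + k)] \geq \tfrac{1}{2}\, p^D\, \Gr_p(r + K - 1 - D)$ once $K$ is taken of order $\gamma_\mathrm{int}(p)^{-1}\log(2/p^D)$ so that the FKG lower bound dominates the boundary term $\Gr_p(r - 1)$. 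The complementary sphere-submultiplicative inequality $\bE_p[\#\partial B_\mathrm{int}(v, r + k)] \leq \bE_p[\#\partial B_\mathrm{int}(v, r)]\, \Gr_p(k)$, also a consequence of \eqref{eq:Gr_submult_strong} obtained by tracking the ``first $r$ edges'' of a geodesic, then converts the window sum into a pointwise lower bound on $\bE_p[\#\partial B_\mathrm{int}(v, r)]$. The constant $C$ in the lemma absorbs the window size $K$, the prefactor $\Gr_p(K)$, the $p^{-D}$ cost of the FKG step, and the $e^{\gamma_\mathrm{int}(p) D}$ correction; verifying that these can be packaged uniformly in $p \geq p_c$ by constants depending only on $G$ is the technical heart of the proof.
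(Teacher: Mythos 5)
Your reduction to a pointwise sphere lower bound and your two ingredients (the FKG/quasi-transitivity bound $\bE_p[\#B_\mathrm{int}(v,r+D)]\geq p^D\Gr_p(r)$ and the refined submultiplicative inequality \eqref{eq:Gr_submult_strong}) are the same as the paper's, and you correctly identify that the single-radius combination $p^D\Gr_p(r)-\Gr_p(r-1)$ need not be positive. However, your window-averaging fix has a genuine gap. To get $\sum_{k=0}^{K-1}\bE_p[\#\partial B_\mathrm{int}(v,r+k)]\geq\tfrac12 p^D\Gr_p(r+K-1-D)$ with $K$ fixed you need $\Gr_p(r-1)\leq\tfrac12 p^D\Gr_p(r+K-1-D)$ uniformly in $r$. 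The only uniform lower bound available on $\Gr_p(r+K-1-D)$ is the Fekete bound $e^{\gamma_\mathrm{int}(p)(r+K-1-D)}$, whereas $\Gr_p(r-1)=e^{\gamma_\mathrm{int}(p)(r-1)+h_p(r-1)}$ with $h_p\geq0$ subadditive but \emph{a priori} unbounded; so you would need $h_p(r-1)\leq\gamma_\mathrm{int}(p)(K-D)-\log(2p^{-D})$, which fails for large $r$ whenever $h_p$ is unbounded. (Submultiplicativity cuts the other way, and monotonicity only gives $\Gr_p(n+m)/\Gr_p(n)\geq1$.) Boundedness of $h_p$ is exactly the content of \cref{thm:L2_subexponential} and holds only under the $L^2$ condition, while \cref{lem:better_Fekete} is invoked for all $p\geq p_c$ — in particular at $p=p_c$, where $\gamma_\mathrm{int}(p_c)=0$ and your window size $K\sim\gamma_\mathrm{int}(p)^{-1}\log(2/p^D)$ is infinite — and also inside the proof of \cref{thm:KestenStigum}, which is precisely the place where unbounded $h_p$ must be handled.

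The paper sidesteps this by using the same two ingredients in a way that makes the subexponential correction disappear in a limit rather than trying to beat it over a fixed window: substituting \eqref{eq:changing_base_growth} into \eqref{eq:Gr_submult_strong} and iterating $k$ times gives $\bE_p[\#B_\mathrm{int}(v,kr)]\leq\bigl(1+(M/p)^C\bE_p[\#\partial B_\mathrm{int}(v,r)]\bigr)^{k-1}\bE_p[\#B_\mathrm{int}(v,r)]$; dividing the log by $kr$ and letting $k\to\infty$ (using that $\bE_p[\#B_\mathrm{int}(v,kr)]$ tracks $\Gr_p(kr)$ up to a $p$-dependent constant, again by \eqref{eq:changing_base_growth}) yields $\gamma_\mathrm{int}(p)\leq\frac1r\log\bigl(1+(M/p)^C\bE_p[\#\partial B_\mathrm{int}(v,r)]\bigr)$, from which the lemma follows after a short manipulation. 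The $k\to\infty$ limit kills any $o(kr)$ correction automatically. I would replace the window-averaging step with this iteration.
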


\begin{proof}[Proof of \cref{lem:better_Fekete}]
Fix $p\in [0,1]$. 
We trivially have that
\begin{align*}
\inf_{r\geq 1} \inf_{v\in V} \frac{1}{r} \log \left(C p^{-C} \bE_p\left[ \# \partial B_\mathrm{int}(v,r)\right]\right) &\leq \liminf_{r \to\infty} \inf_{v\in V} \frac{1}{r} \log  \bE_p\left[ \# \partial B_\mathrm{int}(v,r)\right]
\\
 &\leq \limsup_{r \to\infty}\sup_{v\in V} \frac{1}{r} \log  \bE_p\left[ \# \partial B_\mathrm{int}(v,r)\right]\\ &\leq 
\lim_{r \to\infty} \frac{1}{r} \log \sup_{v\in V} \bE_p\left[ \#  B_\mathrm{int}(v,r)\right] = \gamma_\mathrm{int}(p)
\end{align*}
for every $C \geq 1$ and $p>0$. 
Thus, it suffices to prove that there exists a constant $C'\geq 1$ such that
\begin{equation}
\label{eq:gamma_sphere_proof}
\gamma_\mathrm{int}(p)  \leq \inf_{r\geq 1} \inf_{v\in V} \frac{1}{r}  \log \left(C' p^{-C'} \bE_p\left[ \# \partial B_\mathrm{int}(v,r)\right] \right)
\end{equation}
 whenever $p \geq p_c$.
 It follows straightforward from quasi-transitivity and the Harris-FKG inequality that there exists a constant $C$ such that 
\begin{equation}
\label{eq:changing_base_growth}
p^C \operatorname{Gr}_p(r) \leq 
\bE_p \left[\# B_\mathrm{int}(v,r+C)\right] \leq 
M^C \bE_p \left[\# B_\mathrm{int}(v,r)\right]
\end{equation}
for every $v\in V$ and $r\geq 1$, where $M$ is the maximum degree of $G$. Substituting this inequality into \eqref{eq:Gr_submult_strong} yields that
\begin{align*}
\bE_p \left[\# B_\mathrm{int}(v,(k+1)r)\right]
 &\leq \bE_p \left[\# B_\mathrm{int}(v,kr)\right]+ \bE_p\left[ \# \partial B_\mathrm{int}(v,r)\right]\Gr_p(kr)
\\
&\leq \bE_p \left[\# B_\mathrm{int}(v,kr)\right]+ \left(\frac{M}{p}\right)^C\bE_p\left[ \# \partial B_\mathrm{int}(v,r)\right] \bE_p \left[\# B_\mathrm{int}(v,kr)\right]
 \end{align*}
for every $r,k \geq 0$, and it follows by induction on $k$ that
\[
\bE_p \left[\# B_\mathrm{int}(v,kr)\right] \leq \left(1+ \left(\frac{M}{p}\right)^{C} \bE_p\left[ \# \partial B_\mathrm{int}(v,r)\right]\right)^{k-1} \bE_p \left[\# B_\mathrm{int}(v,r)\right]
\]
for every $r,k \geq 1$. Since $\bE_p \left[\# B_\mathrm{int}(v,kr)\right]\to \infty$ as $k\to \infty$ when $p\geq p_c$, the claimed inequality \eqref{eq:gamma_sphere_proof} follows easily from this together with a further application of \eqref{eq:changing_base_growth}.
\end{proof}

We are now ready to prove \cref{prop:L2_subexponential} and hence \cref{thm:L2_subexponential}.

\begin{proof}[Proof of \cref{prop:L2_subexponential}]
The lower bound follows immediately from \cref{lem:better_Fekete}. We now prove the upper bound; we will take care to keep track of how the relevant constants blow up as $p \downarrow p_c$ so that the estimates we derive here can also be used in the proof of \cref{thm:slightly_subexponential}. We apply \cref{prop:Gpalpha_estimate} together with \cref{lem:Tauberian} to deduce that there exists a continuous function $\kappa_1:I_\nabla \to (0,\infty)$ such that
\[
\Gr_p(r) \leq \Gr_p(\lfloor r/2 \rfloor)+\frac{4}{r^2(1-\alpha)} \cdot \alpha^{-r} \sG^*(p,\alpha) \leq \Gr_p(\lfloor r/2 \rfloor)+\frac{4\kappa_1(p)^2}{r^2 (\alpha_p-\alpha)^2(1-\alpha)} \cdot \alpha^{-r} 
\]
for every $r\geq 1$, $p\in I_\nabla$ with $p\geq p_c$, and $\alpha < \alpha_p := e^{-\log \gamma_\mathrm{int}(p)}$. Taking $\alpha =  r\alpha_p/(r+1)$ we deduce that
\[
\Gr_p(r) \leq \Gr_p(\lfloor r/2 \rfloor)+4\kappa_1(p)^2\left(\frac{r}{r+1}\right)^{-r}  \frac{r+1}{\alpha_p^2(r+1-r\alpha_p)} \cdot \alpha_p^{-r}
\]
for every $r\geq 1$, $p\in I_\nabla$ with $p\geq p_c$, and $\alpha < \alpha_p$. Since $\alpha_p \geq 1/M$ for every $p$, where $M$ is the maximum degree of $G$, it follows that there exists a continuous function $\kappa_2:I_\nabla\to (0,\infty)$ such that
\begin{equation}
\label{eq:growth_proof_recursion}
\Gr_p(r) \leq \Gr_p(\lfloor r/2 \rfloor)+\kappa_2(p) \frac{r+1}{r+1-r\alpha_p}  \alpha_p^{-r}
\end{equation}
for every $r \geq 1$ and $p\in I_\nabla$ with $p\geq p_c$. Fix $r\geq 1$, let $r_{i+1} = \lfloor r_i /2 \rfloor$ for each $i
\geq 0$, and let $k(r)= \min \{i \geq 1: r_i=0\} \leq \log_2 r$. It follows recursively that
\begin{equation}
\label{eq:growth_proof_coarse}
\Gr_p(r) \leq \kappa_2(p) \frac{r+1}{r+1-r\alpha_p} \sum_{i=0}^{k(r)} \alpha_p^{-r_i } \leq \kappa_2(p)  \frac{r+1}{(r+1-r\alpha_p)(1-\alpha_p)}  \alpha_p^{-r},
\end{equation}
where we used that $(r+1)/(r+1-r\alpha_p)$ is an increasing function of $r$ in the first inequality and bounded $\sum_{i=0}^{k(r)} \alpha_p^{-r_i} \leq \alpha_p^{-r} \sum_{i=0}^\infty \alpha^{i}=(1-\alpha_p)^{-1} \alpha_p^{-r}$ in the second inequality.
(This last bound is rather coarse, and we will need a slightly more refined analysis when we prove \cref{thm:slightly_subexponential}.)
When $p>p_c$ we have by \cref{prop:growth_estimate} that $\alpha_p<1$ so that the prefactor on the right is bounded by a $p$-dependent constant as required.
% Since $1\notin I_\nabla$, $\alpha_p$ is bounded away from zero on $I_\nabla$ and 
\end{proof}

We now prove the unconditional growth estimates of \cref{thm:slightly_subexponential} by a slight variation on the proof of \cref{prop:L2_subexponential} above.

\begin{lemma}
\label{lem:unconditioned_growth}
Let $G$ be a connected, locally finite, quasi-transitive graph, and suppose that $p_c < p_{2\to 2}$. Then there exists a positive constant $\delta$ such that
\begin{align}
\bE_p \left[ \# B(v,r) \right] &\asymp \left(r  \wedge \frac{1}{p-p_c} \right)^{\phantom{2}}  e^{\gamma_\mathrm{int}(p) r} 
\end{align}
for every $v\in V$, $r \geq 0$, and $p_c \leq p \leq p_c+\delta$. 
\end{lemma}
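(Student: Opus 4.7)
The plan is to prove the upper and lower bounds separately, each by refining arguments already present in the excerpt. For the upper bound, I would revisit the iteration in the proof of \cref{prop:L2_subexponential} and retain the exact $r$-dependence of its prefactor rather than absorbing it into a $p$-dependent constant. The starting point is the recursion \eqref{eq:growth_proof_recursion}, whose front factor simplifies as
\[
\frac{r+1}{r+1-r\alpha_p} = \frac{r+1}{1+r(1-\alpha_p)} \asymp r \wedge \frac{1}{1-\alpha_p} \asymp r \wedge \frac{1}{p-p_c},
\]
where the last $\asymp$ combines \cref{prop:growth_estimate} with the expansion $1 - e^{-\gamma_\mathrm{int}(p)} \asymp \gamma_\mathrm{int}(p)$ valid for $p$ close to $p_c$. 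Writing $L := (p-p_c)^{-1}$, this upgrades the recursion to
\[
\Gr_p(r) \leq \Gr_p(\lfloor r/2 \rfloor) + C_0\,(r \wedge L)\,e^{\gamma_\mathrm{int}(p) r}
\]
with $C_0$ uniform over $p \in [p_c, p_c + \delta]$. Iterating via $r_i := \lfloor r/2^i \rfloor$ produces $\Gr_p(r) \leq C_0 \sum_i (r_i \wedge L)\,e^{\gamma_\mathrm{int}(p) r_i}$, which I would handle by splitting into the two windows. If $r \leq L$, then every $r_i \leq L$ and $\gamma_\mathrm{int}(p) r_i = O(1)$, so the sum is $\preceq \sum r_i \leq 2r \asymp r\,e^{\gamma_\mathrm{int}(p) r}$. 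If $r > L$, the indices with $r_i \geq L$ contribute $L$ times a geometrically decaying sequence in $e^{\gamma_\mathrm{int}(p) r_i}$ (because $\gamma_\mathrm{int}(p) r_i \geq \gamma_\mathrm{int}(p) L \asymp 1$), summing to $\asymp L\,e^{\gamma_\mathrm{int}(p) r}$; the indices with $r_i < L$ contribute at most $e\sum r_i = O(L)$, which is absorbed since $e^{\gamma_\mathrm{int}(p) r} \succeq 1$.

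For the lower bound I would again split at $r = L$. In the inside-window case $r \leq L$, \cref{lem:growth_in_scaling_window} gives $\bE_p[\#B_\mathrm{int}(v,r)] \asymp r$ directly, and $e^{\gamma_\mathrm{int}(p) r} \asymp 1$ in this range. In the outside-window case $r > L$, I would use the pointwise sphere lower bound implicit in \cref{lem:better_Fekete}: the formula for $\gamma_\mathrm{int}(p)$ as an infimum over all $r \geq 1$ and $v$ yields $\bE_p[\#\partial B_\mathrm{int}(v,\ell)] \geq C^{-1} p_c^{\,C}\,e^{\gamma_\mathrm{int}(p) \ell}$ uniformly in $\ell \geq 1$, $v \in V$, and $p \in [p_c, p_c + \delta]$. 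Summing over $\ell \in \{r - \lfloor L \rfloor, \ldots, r\}$ then gives
\[
\bE_p[\#B_\mathrm{int}(v,r)] \geq \sum_{\ell = r - \lfloor L \rfloor}^{r} \bE_p[\#\partial B_\mathrm{int}(v,\ell)] \succeq L\,e^{\gamma_\mathrm{int}(p)(r - L)} \asymp L\,e^{\gamma_\mathrm{int}(p) r},
\]
the final step using $\gamma_\mathrm{int}(p) L \asymp 1$ so that $e^{-\gamma_\mathrm{int}(p) L}$ is bounded below.

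The main obstacle I anticipate is the bookkeeping in the upper-bound iteration: naively iterating \eqref{eq:growth_proof_recursion} would introduce a spurious $\log r$ factor when summing the prefactors $(r_i \wedge L)$. The two-window splitting above is precisely what prevents this, because in each window exactly one of the two ingredients -- the arithmetic sum $\sum r_i$ or the geometric sum $\sum e^{\gamma_\mathrm{int}(p) r_i}$ -- dominates and scales like a single extremal term.
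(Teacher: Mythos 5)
Your proposal is correct and follows essentially the same strategy as the paper: starting from the recursion \eqref{eq:growth_proof_recursion}, identifying the prefactor $(r+1)/(r+1-r\alpha_p) \asymp r \wedge (p-p_c)^{-1}$ exactly as in \eqref{eq:Growth_prefactor_alpha}, iterating by halving, and using \cref{lem:better_Fekete} together with \cref{lem:growth_in_scaling_window} for the lower bound. The only cosmetic difference is in how you kill the potential $\log r$ factor: you iterate $r_i = \lfloor r/2^i\rfloor$ all the way to zero and split the resulting sum $\sum_i (r_i \wedge L)e^{\gamma_\mathrm{int}(p)r_i}$ into the indices with $r_i \geq L$ (geometric decay of $e^{\gamma_\mathrm{int}(p)r_i}$ dominates) and $r_i < L$ (arithmetic decay of $r_i$ dominates), whereas the paper stops the recursion at $k(r) = \min\{i : r_i \leq (p-p_c)^{-1}\}$ and absorbs everything below the window into the already-established bound $\Gr_p(\lfloor L\rfloor) \asymp L$. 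Both versions exploit the same fact that $\gamma_\mathrm{int}(p)(r_i - r_{i+1}) \gtrsim 1$ for $r_i$ above the window, so the sum of exponentials is comparable to its leading term. Your lower bound in the outside-window case also matches the paper's (which uses the full sum $\sum_{\ell=0}^{r} e^{\gamma_\mathrm{int}(p)\ell}$, evaluating to the same order as your truncated sum over the top $\lfloor L\rfloor$ indices); incidentally the $\vee$ in that display in the paper appears to be a typo for $\wedge$.
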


% Note that this theorem gives substantially more precise control for $r \gg (p-p_c)^{-1}$ than the easier and weaker estimate
% \[
% \bE_p \left[ \# B(v,r) \right] \asymp \left(r  \wedge \frac{1}{p-p_c} \right)  \exp\left[ \Theta\left((p-p_c)r\right)\right]
% \qquad p \in [p_c,p_c+\delta], r\geq 0.
% \]

\begin{proof}[Proof of \cref{thm:slightly_subexponential}]
% The assertion that $\gamma_\mathrm{int}(p) \asymp (p-p_c)$ was established in \cref{prop:growth_estimate}, while the results of [ref] imply that
It follows from \cref{lem:growth_in_scaling_window,prop:growth_estimate} that the estimate
\begin{equation}
\label{eq:growth_inside_window_restated}
\Gr_p(r) \asymp r \asymp r e^{\gamma_\mathrm{int}(p)r} \qquad \text{ for every $r\leq (p-p_c)^{-1}$}
\end{equation}
holds for every $p\geq p_c$ and $r \geq 1$. Moreover, it follows from \cref{prop:growth_estimate,lem:better_Fekete} and a little elementary analysis that
\[
\Gr_p(r) \geq \sup_{v\in V}\sum_{\ell=0}^r \bE_p\left[ \# \partial B_\mathrm{int}(v,r)\right] 
% \succeq  \sum_{\ell=0}^r \sup_{v\in V}\bE_p\left[ \# \partial B_\mathrm{int}(v,r)\right] 
\succeq \sum_{\ell=0}^r e^{\gamma_\mathrm{int}(p)\ell} \succeq \left(r \vee \frac{1}{p-p_c}\right) e^{\gamma_\mathrm{int}(p)r}
\]
for every $r\geq 1$, so that it remains only to prove the desired upper bounds on $\Gr_p(r)$ in the case that $p>p_c$ and $r\geq (p-p_c)^{-1}$. Similarly to the proof of \cref{prop:L2_subexponential}, we fix $r\geq (p-p_c)^{-1}$ and let $r_{i+1}=\lfloor r_i/2\rfloor$ for each $i\geq 0$, but now define $k(r)=\min\{i \geq 1 : r_i \leq (p-p_c)^{-1}\}$.
With these definitions in hand, we may apply the estimate \eqref{eq:growth_proof_recursion} recursively as before to deduce that
\begin{equation}
\label{eq:near_critical_growth_recursion}
\Gr_p(r) \leq \Gr_p(\lfloor (p-p_c)^{-1} \rfloor) + \kappa_2(p) \frac{r+1}{r+1-r\alpha_p} \sum_{i=0}^{k(i)}\exp\left[\gamma_\mathrm{int}(p)r_i\right],
\end{equation}
where we recall that $\alpha_p = e^{-\gamma_\mathrm{int}(p)}$. We have by \cref{prop:growth_estimate}  that $1-\alpha_p\asymp p-p_c$ forevery  $p>p_c$ and hence that the prefactor multiplying the sum of exponentials in \eqref{eq:near_critical_growth_recursion} satisfies
\begin{equation}
\label{eq:Growth_prefactor_alpha}
\left(\frac{r+1}{r+1-r\alpha_p}\right)^{-1} = 1-(1-\frac{1}{r+1})\alpha_p = \frac{1}{r+1} + (1-\alpha_p) - \frac{1-\alpha_p}{r+1} \asymp (p-p_c) \vee \frac{1}{r}
\end{equation}
for every $p>p_c$ and $r\geq 1$. To control the sum of exponentials itself, we note that for each $0\leq i < k(i)$ we have that $r_i-r_{i+1} \geq r_i/2 \geq (p-p_c)^{-1}/2$. It follows from \cref{prop:growth_estimate} that there exists a positive constant $c$ such that
\[
\exp\left[\gamma_\mathrm{int}(p)r_{i+1}\right] \leq \exp\left[\gamma_\mathrm{int}(p)r_{i}-c\right] 
\]
for every $0\leq i < k(i)$ and hence that
\begin{equation}
\label{eq:sum_of_exponentials}
\sum_{i=0}^{k(i)}\exp\left[\gamma_\mathrm{int}(p)r_i\right] \leq \exp\left[\gamma_\mathrm{int}(p)r\right]  \sum_{i=0}^{k(i)} e^{-ci} \preceq \exp\left[\gamma_\mathrm{int}(p)r\right].
\end{equation}
The claimed upper bound follows by substituting \eqref{eq:growth_inside_window_restated}, \eqref{eq:Growth_prefactor_alpha}, and \eqref{eq:sum_of_exponentials} into \eqref{eq:near_critical_growth_recursion} and using that $\kappa_2(p)$ is bounded on a neighbourhood of $p_c$. \qedhere
\end{proof}

We are now ready to conclude the proof of \cref{thm:slightly_subexponential} given \cref{prop:Gpalpha_estimate}. The proof of the lower bound on the conditional expectation outside the scaling window will make use of the precise control on the tail of the radius of finite slightly supercritical clusters established in \cite[Theorem 1.2]{hutchcroft2020slightly}. The proof will apply the BK inequality  and the attendant notion of the disjoint occurrence $A \circ B$ of two increasing events $A$ and $B$; see e.g.\ \cite[Chapter 2]{grimmett2010percolation} for background.

\begin{proof}[Proof of \cref{thm:slightly_subexponential}]
The estimates of \eqref{eq:slightly_subexponential_gamma} and \eqref{eq:slightly_subexponential_unconditioned} are provided by \cref{prop:growth_estimate} and \cref{lem:unconditioned_growth} respectively, so that it remains only to prove \eqref{eq:slightly_subexponential_conditioned}. Let $\delta>0$ be such that $p_c+\delta<p_{2\to 2}$ and such that \cref{lem:unconditioned_growth} and the results of \cite{hutchcroft2020slightly} hold for every $p_c < p \leq p_c+\delta$, and fix one such $p_c<p<p_c+\delta$. All constants appearing below will be independent of this choice of $p$. (They may \emph{a priori} depend on the choice of $\delta$, but this is not a problem since $\delta$ may be chosen once-and-for-all as a function of the graph.)

\medskip

 We begin with the upper bound. For the `outside the scaling window' case $r \geq (p-p_c)^{-1}$, 
 % the upper bound of \eqref{eq:slightly_subexponential_conditioned} follows trivially from the upper bound of \eqref{eq:slightly_subexponential_unconditioned} by noting that
 we simply note that
\begin{equation}
\bE_p\left[  \# B_\mathrm{int}(v,r) \mid v \leftrightarrow \infty \right] \leq \bE_p\left[  \# B_\mathrm{int}(v,r) \right] \bP_p(v\leftrightarrow \infty)^{-1} \preceq (p-p_c)^{-2} e^{\gamma_\mathrm{int}(p)}
\end{equation}
by \eqref{eq:slightly_subexponential_unconditioned} as claimed. We now consider the `inside the scaling window' case $r \leq (p-p_c)^{-1}$.
  Let $u,v\in V$ and $r\geq 1$. By considering the final intersection of some simple open path of length at most $r$ connecting $v$ to $u$ and some infinite simple open path starting at $u$, we see that we have the inclusion of events
\begin{equation}
\{ v \xleftrightarrow{r} u \} \cap \{v\leftrightarrow \infty\} \subseteq \bigcup_{w\in V} \{v \xleftrightarrow{r} w\} \circ \{ w \xleftrightarrow{r} u \} \circ \{ w \leftrightarrow \infty \}.
\end{equation}
Thus, we have by a union bound and the BK inequality that
\begin{equation}
\label{eq:condition_growth_BK}
\bE_p\left[ \mathbbm{1}(v \leftrightarrow \infty) \cdot \# B_\mathrm{int}(v,r)  \right] \leq \sup_{w \in V} \bE_p\left[\# B_\mathrm{int}(v,r)\right]^2 \sup_{w \in V} \bP_p(w \leftrightarrow \infty)
\end{equation}
for every $r\geq 1$. Since $G$ is connected and quasi-transitive we have by the Harris-FKG inequality that there exists a constant $C$ such that 
$\sup_{w\in V} \bP_p(u \leftrightarrow \infty) \leq p^C \inf_{w\in V} \bP_p(u \leftrightarrow \infty)$ for every $p$ and hence that 
\begin{equation}
\label{eq:condition_growth_BK}
\bE_p\left[  \# B_\mathrm{int}(v,r) \mid v \leftrightarrow \infty \right] \preceq \sup_{w \in V} \bE_p\left[\# B_\mathrm{int}(v,r)\right]^2 
\end{equation}
for every $r\geq 1$. This implies the claimed upper bound within the scaling window in conjunction with the upper bound of \eqref{eq:slightly_subexponential_unconditioned}.

\medskip

We now prove the lower bound on the conditional expectation. We begin with the `inside the scaling window' case $r \leq (p-p_c)^{-1}$. Suppose that we explore the cluster of the origin in a breadth-first manner, revealing the status of all edges incident to the intrinsic ball of radius $i$ at step $i$ of the exploration process. Conditional on this exploration up to step $i$, the probability that any vertex in $\partial B_\mathrm{int}(v,i)$ is connected to infinity by an open path that does not visit $B_\mathrm{int}(v,i)$ after its first step is at most $\sup_{w\in V}\bP_p(w \leftrightarrow \infty) \asymp (p-p_c)$. As such, if we define $\cF_i$ to be the $\sigma$-algebra generated by $B_\mathrm{int}(v,i)$ then we have by Markov's inequality that
\begin{equation}
\bP_p\left(v \leftrightarrow \infty \mid \cF_i \right) \preceq  (p-p_c) \cdot \# \partial B_\mathrm{int}(v,i)
\end{equation}
for each $i\geq 0$ and $p_c<p \leq 1$. For each $r\geq 1$ and $\eps>0$, let $T_{r,\eps}=\inf\{i\geq r : \#\partial B_\mathrm{int}(v,i) \leq \eps r \}$, where we set $\inf \emptyset = \infty$. It follows from the above discussion that there exist positive constants $C_1$ and $C_2$ such that
\begin{equation}
\bP_p( v \leftrightarrow \infty \text{ and } T_{r,\eps} \leq 2r \mid \partial B_\mathrm{int}(v,r) \neq \emptyset) \leq C_1 \eps r (p-p_c) \leq C_2 \eps r \bP_p( v \leftrightarrow \infty).
\end{equation}
On the other hand, since $r \leq (p-p_c)^{-1}$, we have by \cite[Lemma 2.1]{hutchcroft2020slightly} that there exists a positive constant $C_3$ such that $\bP_p(\partial B_\mathrm{int}(v,r) \neq \emptyset) \leq C_3 /r$
and hence that
\begin{equation}
\bP_p( T_{r,\eps} \leq 2r \mid v \leftrightarrow \infty ) \leq C_2 C_3 \eps.
\end{equation}
Thus, if we take $\eps = 1/(2C_2C_3)$, we find that $T_{r,\eps}> 2r$ with probability at least $1/2$ on the event that $v$ belongs to an infinite cluster. It follows that
\begin{equation}
\bE_p\left[  \# B_\mathrm{int}(v,2r) \mid v \leftrightarrow \infty \right] \geq \eps r^2 \bP_p(T_{r,\eps}>2r \mid v \leftrightarrow \infty) \geq \frac{r^2}{2C_2C_3},
\end{equation}
for every $r \leq (p-p_c)^{-1}$, which is easily seen to imply the claimed lower bound in this regime.

\medskip

It remains only to prove the lower bound on the conditional expectation in the case $r\geq (p-p_c)^{-1}$. Fix $v\in V$.  Suppose that $y$ and $z$ both belong to $B_\mathrm{int}(v,r)$, and let $\eta_1$ and $\eta_2$ be intrinsic geodesics from $v$ to $y$ and $v$ to $z$ respectively. If $\eta_1$ and $\eta_2$ coincide for the last time at some vertex $x$, then we must have that there exists $0\leq \ell =d_\mathrm{int}(v,x)\leq r$ such that the disjoint occurence $\{ x \in \partial B_\mathrm{int}(v,\ell)\}\circ \{y \in B_\mathrm{int}(x,r-\ell)\} \circ \{z \in B_\mathrm{int}(x,r-\ell)\}$ occurs. Indeed, if we take any three intrinsic geodesics $\gamma_1$, $\gamma_2$, and $\gamma_3$ from $v$ to $x$, $x$ to $y$ and $x$ to $z$ respectively, then the union of $\gamma_1$ with all the closed edges incident to $B_\mathrm{int}(v,\ell)$ is a witness for the event $\{x\in \partial B_\mathrm{int}(x,\ell)\}$, the two paths $\gamma_2$ and $\gamma_3$ are witnesses for the events $\{y \in B_\mathrm{int}(x,r-\ell)\}$ and $\{z \in B_\mathrm{int}(x,r-\ell)\}$, and all three sets are disjoint from each other.
 % must be disjoint witnesses for the events $\{ x \in B_\mathrm{int}(v,\ell)\}$,  $\{y \in B_\mathrm{int}(x,r-\ell)\}$, and $\{z \in B_\mathrm{int}(x,r-\ell)\}$.
It follows by a union bound that
\begin{equation}
\bE_p \left[ (\# B_\mathrm{int}(v,r))^2 \right] \leq \sum_{\ell=0}^r \sum_{x,y,z} \bP_p \bigl(\{x \in \partial B_\mathrm{int}(u,\ell) \} \circ \{y \in B_\mathrm{int}(x,r-\ell)\} \circ \{z \in B_\mathrm{int}(x,r-\ell)\}\bigr)
\label{eq:intrinsic_2nd_moment_Reimer}
\end{equation} 
for every  $r\geq 0$ and hence by Reimer's inequality that
\begin{align}
\bE_p \left[ (\# B_\mathrm{int}(v,r))^2 \right]&\leq \sum_{\ell=0}^r \bE_p \left[ \# \partial B_\mathrm{int}(u,\ell ) \right] \sup_{v \in V} \bE_p \left[ \# \partial B_\mathrm{int}(v,r-\ell ) \right]^2
\nonumber\\
% \end{align}
% for every $v\in V$ and $r\geq 0$. Applying \cref{lem:unconditioned_growth} yields that
% \begin{align}
% \bE_p \left[ (\#B_\mathrm{int}(v,r))^2 \right]  
&\leq \frac{1}{(p-p_c)^2} e^{2\gamma_\mathrm{int}(p)r}\sum_{\ell=0}^r \bE_p \left[ \# \partial B_\mathrm{int}(u,\ell ) \right]  e^{-2\gamma_\mathrm{int}(p)\ell}
\end{align}
% for every $v\in V$ and
 for every $r\geq 0$, where we applied \cref{lem:unconditioned_growth} in the second line.
We may bound the sum appearing here in terms of the generating function $\sG$ and apply \cref{prop:Gpalpha_estimate} to obtain that
\begin{equation}\sum_{\ell=0}^r \bE_p \left[ \# \partial B_\mathrm{int}(u,\ell ) \right]  e^{-2\gamma_\mathrm{int}(p)\ell}
\leq \sG^*(p,\alpha_p^2) \preceq \frac{1}{\alpha_p-\alpha_p^2} \asymp \frac{1}{p-p_c},
\end{equation}
so that 
\begin{equation}
\bE_p \left[ (\#B_\mathrm{int}(v,r))^2 \right]  \preceq \frac{1}{(p-p_c)^3} e^{2\gamma_\mathrm{int}(p)r} \asymp \frac{1}{(p-p_c)} \bE_p \left[ \# B_\mathrm{int}(u,r) \right]^2
\end{equation}
for every $p_c<p \leq 1$ and  $r \geq (p-p_c)^{-1}$. Now, it follows from \cref{lem:unconditioned_growth} that there exists a positive constant $c_1$ such that
\begin{align}
\bE_p \left[ \# \left( B_\mathrm{int}(v,r) \setminus B_\mathrm{int}(v,\lfloor c_1 r\rfloor)\right) \right] &= \bE_p \left[ \# B_\mathrm{int}(v,r) \right] - \bE_p\left[\# B_\mathrm{int}(v,\lfloor c_1 r\rfloor) \right]
\nonumber\\
& \succeq \bE_p \left[ \#B_\mathrm{int}(v,r) \right] \asymp \left(r\wedge \frac{1}{p-p_c}\right) e^{\gamma_\mathrm{int}(p) r}
\end{align}
for every $r\geq 1$. Letting $Z_r = \#(B_\mathrm{int}(v,r) \setminus B_\mathrm{int}(v,\lfloor c_1 r\rfloor))$, we conclude that if $r\geq (p-p_c)^{-1}$ then
\begin{equation}
\bE_p Z_r \asymp \frac{1}{p-p_c} e^{\gamma_\mathrm{int}(p) r} \qquad \text{ and } \qquad \bE_p \left[Z_r^2\right] \preceq \frac{1}{(p-p_c)^3} e^{2\gamma_\mathrm{int}(p) r} \asymp \frac{1}{p-p_c} \left(\bE_p Z_r \right)^2.
\end{equation} 
Since the random variable $Z_r$ is non-zero if and only if $K_v$ has intrinsic radius at least $c_1 r$, it follows from \cite[Theorem 1.2]{hutchcroft2020slightly} that there exist positive constants $C_4$ and $c_2$ such that if $r\geq (p-p_c)^{-1}$ then
\begin{equation}
% \bP_p(Z_r >0) = \bP_p(v\leftrightarrow \infty) + 
\bP_p(Z_r>0, v\nleftrightarrow \infty) \leq C_4(p-p_c) e^{-c_2 (p-p_c) r}.
\end{equation}
As such, we have by Cauchy-Schwarz that there exists a constant $C_5$ such that
\begin{equation}
\bE_p \left[Z_r \mathbbm{1}(Z_r >0, v \nleftrightarrow \infty)\right] \leq \sqrt{\bE_p \left[Z_r^2\right] \bP_p(Z_r >0, v \nleftrightarrow \infty)} \leq C_5 e^{-c_2(p-p_c)r} \bE_p Z_r
\end{equation}
for every $r\geq 1$. It follows that there exists a constant $C_6$ such that if $r\geq C_6 (p-p_c)^{-1}$ then 
\begin{equation}\bE_p \left[Z_r \mathbbm{1}(Z_r >0, v \nleftrightarrow \infty)\right] \leq \frac{1}{2}\bE_p Z_r \end{equation}
and hence
\begin{equation}
\bE_p \left[Z_r \mathbbm{1}(v \leftrightarrow \infty)\right] = \bE_p \left[Z_r\right]-\bE_p \left[Z_r \mathbbm{1}(Z_r >0, v \nleftrightarrow \infty)\right] \geq \frac{1}{2}\bE_p\left[ Z_r \right] \asymp \frac{1}{p-p_c} e^{\gamma_\mathrm{int}(p)r}.
\end{equation}
It follows that
\begin{equation}
\bE_p\left[\#B_\mathrm{int}(v,r)\mid v \leftrightarrow \infty\right] \geq \bE_p \left[Z_r 
\mid v \leftrightarrow \infty\right] \succeq \frac{1}{(p-p_c)^2} e^{\gamma_\mathrm{int}(p)r}.
\end{equation}
for every $r\geq C_6 (p-p_c)^{-1}$. This is easily seen to conclude the proof since the remaining cases $(p-p_c)^{-1} < r < C_6 (p-p_c)^{-1}$ can be handled by monotonicity in $r$.
% On the other hand, since the event $\{\partial B_\mathrm{int}(v,i) \neq \emptyset\}$ is contained in the event $\{v \leftrightarrow \infty\}$, we have that 
 % Taking the limit as $r\to \infty$, it follows that $\gamma_\mathrm{int}(p) \leq \gamma_{\mathrm{int}}(q) + (p-q)/q$
% for every $0 \leq q \leq p \leq 1$.
\end{proof}
% \begin{proof}
% The lower bound follows from \cref{lem:better_Fekete} [details needed]. The proof of the upper bound of \eqref{eq:L2_subexponential_annealed} follows from \cref{prop:alpha_dif_ineq} by a very similar argument to that given the proof of \cref{thm:slightly_subexponential}, which we omit. 
% \end{proof}

\begin{remark}
The proof of \cref{thm:slightly_subexponential} also yields that there exists $\delta>0$ such that
\begin{equation}
\bE_p \left[(\# B_\mathrm{int}(v,r))^2 \mid v \leftrightarrow \infty \right] \asymp \bE_p \left[\# B_\mathrm{int}(v,r) \mid v \leftrightarrow \infty \right]^2 \asymp \left(r \wedge \frac{1}{p-p_c}\right)^2 e^{2\gamma_\mathrm{int}(p)r}
\end{equation}
for every $p_c < p \leq p_c+\delta$ and $r\geq 1$, and hence that $\# B_\mathrm{int}(v,r)$ is of order $(r \wedge (p-p_c)^{-1})e^{\gamma_\mathrm{int}(p)r}$ with good probability conditioned on $\{v\leftrightarrow \infty\}$ for each $r\geq 1$. It should be possible to prove similar estimates for higher moments with a little further work.
\end{remark}

\subsection{Proof of \cref{prop:Gpalpha_estimate}}
\label{subsec:generating_functions}

In this section we prove \cref{prop:Gpalpha_estimate} and thereby complete the proofs of \cref{thm:L2_subexponential,thm:slightly_subexponential,prop:L2_subexponential}. Our proof will work by deriving and analyzing a certain differential inequality concerning the generating function $\sG(p,\alpha,u)$. To this end, we define for each $p\in [0,1]$, $\alpha>0$, and $u\in V$ the formal derivative
\[
\frac{\partial}{\partial \alpha} \sG(p,\alpha,u) := \bE_p\left[ \sum_{v \in K_u} d_\mathrm{int}(u,v) \alpha^{d_\mathrm{int}(u,v)-1} \right].
\]
Note that, being defined as a convergent power series, $\sG(p,\alpha,u)$ is an analytic function of $\alpha$ with derivative 
$\frac{\partial}{\partial \alpha} \sG(p,\alpha,u)$
on $(0,\alpha_p)$ for each $p\in [0,1]$ and $u\in V$. We will deduce \cref{prop:Gpalpha_estimate} from the following differential inequality.

\begin{prop}
\label{prop:alpha_dif_ineq}
Let $G$ be a connected, locally finite, quasi-transitive graph. Then there exists a continuous function $\eta: I_\nabla \times (0,1] \to (0,1]$  such that
\[
\frac{\partial}{\partial \alpha} \sG(p,\alpha,u)  \geq \eta(p,\alpha) \sG(p,\alpha,u)^2
\]
for every $p\in [0,1]$, $u\in V$, and $0< \alpha \leq 1$ such that $\sG_{p,\alpha}^*<\infty$.
% and let $p_c < p < p_{2\to 2}$.
\end{prop}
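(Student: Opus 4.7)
The strategy is to reduce the differential inequality to a second-moment estimate for the intrinsic cluster volume $|K_u|_\alpha := \sum_{v\in K_u}\alpha^{d_\mathrm{int}(u,v)}$. The target bound is
\[
\bE_p\bigl[|K_u|_\alpha^2\bigr] \;\leq\; C(p,\alpha)\Bigl(\sG(p,\alpha,u) + \alpha\,\tfrac{\partial\sG}{\partial\alpha}(p,\alpha,u)\Bigr)
\]
for a continuous function $C : I_\nabla \times (0,1] \to (0,\infty)$. Since $\sG(p,\alpha,u) = \bE_p[|K_u|_\alpha]$, Jensen's inequality gives $\bE_p[|K_u|_\alpha^2]\geq \sG(p,\alpha,u)^2$, so the target rearranges into $\alpha\partial_\alpha\sG \geq \sG^2/C - \sG$. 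When $\sG(p,\alpha,u)\geq 2C(p,\alpha)$ this already yields $\partial_\alpha\sG \geq \sG^2/(2C\alpha)$; when $\sG$ is bounded one supplements this with the trivial lower bound $\partial_\alpha\sG \geq p$ obtained from $\bE_p|\partial B_\mathrm{int}(u,1)|\geq p$ together with $\sG\leq $ const. Taking the minimum of the two lower bounds divided by $\sG^2$ defines the desired $\eta(p,\alpha)\in (0,1]$ with the required continuity.

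The second-moment estimate itself rests on two ingredients. The first is an intrinsic Aizenman--Newman tree-graph inequality: for each ordered pair $(v_1,v_2)\in K_u^2$, fix a canonical BFS spanning tree $T$ of $K_u$ and let $z$ be the branching point of the tree-geodesics from $u$ to $v_1$ and $v_2$. The three tree-paths $u\to z$, $z\to v_1$, $z\to v_2$ are edge-disjoint and satisfy $d_\mathrm{int}(u,v_1)+d_\mathrm{int}(u,v_2)\geq 2d_\mathrm{int}(u,z)+d_\mathrm{int}(z,v_1)+d_\mathrm{int}(z,v_2)$; an application of the BK--Reimer inequality then gives
\[
\bE_p\bigl[|K_u|_\alpha^2\bigr] \;\leq\; \sum_{z\in V}\tau(u,z,\alpha^2)\,\sG(p,\alpha,z)^2, \qquad \tau(u,z,\beta):=\bE_p\!\left[\beta^{d_\mathrm{int}(u,z)}\Ind{u\leftrightarrow z}\right].
\]
The second ingredient is the elementary bound $\alpha^{2r}\leq (r+1)\alpha^r$ for $\alpha\in(0,1]$ and $r\geq 0$, which upon summing in $r$ yields the key identity $\sG(p,\alpha^2,u)\leq \sG(p,\alpha,u)+\alpha\,\partial_\alpha\sG(p,\alpha,u)$ and so makes the derivative term appear naturally on the right side of the target inequality.

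\textbf{Main obstacle.} The crux of the argument is converting the Aizenman--Newman upper bound, which carries a potentially circular factor of $\sG(p,\alpha,z)^2\leq \sG^*(p,\alpha)^2$, into the linear-in-derivative form demanded by the second-moment estimate. This is where the modified open triangle condition is used decisively: I expect to split the sum over branching points $z$ according to intrinsic scale $k$ from $u$ and bound the long-range contribution in operator-norm fashion using the decay of $\sup_v T_p^2 P^k T_p(v,v)$ as $k\to\infty$ on $I_\nabla$, in direct analogy with the role the ordinary triangle diagram plays in Aizenman--Newman mean-field analyses. The remaining short-range contribution is then dominated by $\sG + \alpha\partial_\alpha\sG$ via the identity $\sG(p,\alpha^2,u)\leq \sG+\alpha\partial_\alpha\sG$ established above, and the continuity of the resulting $C(p,\alpha)$ on $I_\nabla\times(0,1]$ is inherited from continuity of the modified triangle diagram in $p$ together with standard continuity properties of $\sG^*(p,\alpha)$.
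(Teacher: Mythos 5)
Your approach differs fundamentally from the paper's and, as written, has a genuine gap that the invocation of the modified open triangle condition does not close.

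The core problem is in the final step. Your Aizenman--Newman tree-graph inequality gives
\[
\bE_p\bigl[|K_u|_\alpha^2\bigr]\;\leq\;\sum_{z}\tau(u,z,\alpha^2)\,\sG(p,\alpha,z)^2\;\leq\;\sG^*(p,\alpha)^2\,\sG(p,\alpha^2,u),
\]
and your elementary inequality $\alpha^{2r}\leq(r+1)\alpha^r$ only converts this into $\bE_p[|K_u|_\alpha^2]\leq\sG^*(p,\alpha)^2\bigl(\sG+\alpha\partial_\alpha\sG\bigr)$. The prefactor $\sG^*(p,\alpha)^2$ diverges as $\alpha\uparrow\alpha_p$, so this does \emph{not} imply the target estimate $\bE_p[|K_u|_\alpha^2]\leq C(p,\alpha)\bigl(\sG+\alpha\partial_\alpha\sG\bigr)$ with $C$ bounded, and combining with Jensen yields only the tautology $\sG^2\leq\sG^{*2}\bigl(\sG+\alpha\partial_\alpha\sG\bigr)$. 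Worse, the term $\sG(p,\alpha^2,u)$ is actually $O(1)$ as $\alpha\uparrow\alpha_p$ (because $\alpha^2$ stays bounded away from the radius of convergence $\alpha_p$ once $\alpha_p<1$), so the AN bound simplifies to $\bE_p[|K_u|_\alpha^2]\leq O(1)\cdot\sG^{*2}$, which is already matched by the Jensen lower bound $\sG^2\leq\bE_p[|K_u|_\alpha^2]$. In other words, the second moment is comparable to $\sG^2$ on both sides and carries no information about $\partial_\alpha\sG$ at all. The factor $\sG(p,\alpha,z)^2\leq\sG^{*2}$ is uniform over $z$ by the FKG comparison \eqref{eq:sGinfsup}, so splitting the sum over branching points by scale cannot reduce it, and the modified triangle diagram $\sup_v T_p^2P^kT_p(v,v)$ has no structural relation to that factor in your decomposition. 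Your ``I expect to split the sum over branching points\ldots'' is precisely where the proof would have to happen, and I do not see how to make it work.

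The paper takes the opposite route: it proves a \emph{direct lower bound} on $\alpha\,\partial_\alpha\sG$ rather than an upper bound on the second moment. It considers the ``well-separated disconnected'' generating function $\sum_{v,w,y}P^k(v,w)\,\bE_p\!\bigl[\mathbbm{1}(v\nleftrightarrow w)\alpha^{d_\mathrm{int}(u,v)+d_\mathrm{int}(w,y)}\bigr]$, shows via Lemma~\ref{lem:ultraviolet} that this is at least $\sG_*^2-\sG^{*2}\sup_a[T_p^2P^kT_p](a,a)$ (here is where the modified open triangle condition bites, forcing the subtracted term to be negligible for large $k$), and then shows via a sprinkling/coupling argument that opening a fresh path $\gamma$ between the disconnected endpoints produces a configuration in which the points of $\gamma$ are forced bridge points on all intrinsic geodesics from $u$ to $y$, so the disconnected generating function is dominated by $(p\alpha)^{-k}\sup_v|B(v,k)|^2$ times $\alpha\partial_\alpha\sG$. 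This bridge-point mechanism is what makes the derivative appear with the correct sign. Your proposal has no analogue of it.
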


\begin{proof}[Proof of \cref{prop:Gpalpha_estimate} given \cref{prop:alpha_dif_ineq}]
Note that $\alpha_p \geq 1/(M-1) >0$ for every $p\in [0,1]$, where $M$ is the maximum degree of $G$.
Fix $p_c \leq p \in I_\nabla$. 
It follows from \cref{lem:better_Fekete} that $\sG_*(p,\alpha_p)=\sG^*(p,\alpha_p)=\infty$ and $\sG^*(p,\alpha)<\infty$ for every $\alpha < \alpha_p$. (Since $\sG(p,\alpha,v)$ can be written as a power series in $\alpha$ with non-negative coefficients and with radius of convergence $\alpha_p$, this conclusion may also be derived from the Vivanti–Pringsheim theorem.) The differential inequality of \cref{prop:alpha_dif_ineq} implies that
\[
\frac{\partial}{\partial \alpha} \sG(p,\alpha,u)^{-1}
= - \sG(p,\alpha,u)^{-2} \frac{\partial}{\partial \alpha} \sG(p,\alpha,u)^{-1} \leq - \eta(p,\alpha) \leq - \inf \{ \eta(p,\beta):\beta \in [1/2M,1]\}
\]
for every $u\in V$ and $\alpha_p /2 \leq \alpha<\alpha_p$.
Integrating this differential inequality yields that
\[
\sG(p,\alpha,u)^{-1} \geq  (\alpha_p-\alpha) \inf \{ \eta(p,\beta):\beta \in [1/2M,1]\}
\]
for every $\alpha_p/2 \leq \alpha < \alpha_p$ and $u\in V$, and the claim follows by rearranging.
\end{proof}

We now begin to work towards the proof of \cref{prop:alpha_dif_ineq}. We begin by proving the following lemma, which can be thought of as a `well-separated' version of the same inequality.

\begin{lemma}
\label{lem:ultraviolet}
Let $G$ be a countable graph, and let $P$ be the transition matrix of simple random walk on $G$. Then
\[
\sum_{v,w \in V} P^k(v,w) \bE_p\left[ \sum_{y\in V} \mathbbm{1} (v \nleftrightarrow w) \alpha^{d_\mathrm{int}(u,v) + d_\mathrm{int}(w,y)}\right] \geq \sG_*(p,\alpha)^2 -  \sG^*(p,\alpha)^2 \sup_{a\in V}\left[T_p^2 P^k T_p\right](a,a).
\]
for every $p\in [0,1]$, $u\in V$, and $0< \alpha \leq 1$ such that $\sG^*(p,\alpha)<\infty$.
\end{lemma}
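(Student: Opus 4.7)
The plan is to write $\mathbbm{1}(v \nleftrightarrow w) = 1 - \mathbbm{1}(v \leftrightarrow w)$ and split the left-hand side as $B - C$, where
\[
B := \sum_{v,w,y} P^k(v,w)\, \bE_p\!\left[\alpha^{d_\mathrm{int}(u,v)+d_\mathrm{int}(w,y)}\right]
\quad\text{and}\quad
C := \sum_{v,w,y} P^k(v,w)\, \bE_p\!\left[\mathbbm{1}(v \leftrightarrow w)\,\alpha^{d_\mathrm{int}(u,v)+d_\mathrm{int}(w,y)}\right].
\]
It then suffices to prove $B \geq \sG_*(p,\alpha)^2$ and $C \leq \sG^*(p,\alpha)^2 \sup_{a\in V}(T_p^2 P^k T_p)(a,a)$ separately.

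The lower bound on $B$ is a direct application of Harris--FKG. For each fixed $u,v,w$, both $\alpha^{d_\mathrm{int}(u,v)}$ and $\sum_y \alpha^{d_\mathrm{int}(w,y)}$ are non-negative, bounded, and increasing in the percolation configuration (adding open edges can only shorten intrinsic distances), so FKG factorizes the expectation as $\bE_p[\alpha^{d_\mathrm{int}(u,v)}]\,\sG(p,\alpha,w)$. Bounding $\sG(p,\alpha,w) \geq \sG_*(p,\alpha)$, summing over $w$ using $\sum_w P^k(v,w) = 1$, and then summing over $v$ using $\sum_v \bE_p[\alpha^{d_\mathrm{int}(u,v)}] = \sG(p,\alpha,u) \geq \sG_*(p,\alpha)$ yields $B \geq \sG_*(p,\alpha)^2$.

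For $C$, the key step is a tree-graph / BK--Reimer decomposition. On the support of the integrand, the four vertices $u,v,w,y$ all lie in the same cluster, so one can introduce Steiner pivots (at least one lying on an intrinsic geodesic from $u$ to $v$, with an edge-disjoint witness to $w$, and similarly for the $w$--$y$ connection) that split $d_\mathrm{int}(u,v)$ and $d_\mathrm{int}(w,y)$ additively across the pivots. The intrinsic-distance weights are incorporated into Reimer's inequality by writing $\alpha^{d_\mathrm{int}(a,b)} = (1-\alpha)\sum_{m\geq 0}\alpha^m \mathbbm{1}(d_\mathrm{int}(a,b) \leq m)$ as an Abel sum of increasing indicators and applying Reimer term-by-term. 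This produces factors of $G_p(\alpha)(a,b) := \bE_p[\alpha^{d_\mathrm{int}(a,b)}] \leq T_p(a,b)$ along the weighted geodesic legs and of $T_p$ along the unweighted connecting legs.

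After this decomposition, summing $G_p(\alpha)$ against the outer summation variables (in particular $\sum_y G_p(\alpha)(\cdot,y) = \sG(p,\alpha,\cdot) \leq \sG^*$, contributing two $\sG^*$ factors) and bounding the remaining $G_p(\alpha)$ factors by $T_p$ collapses the joint sum over $v,w$ against $P^k(v,w)$ into an expression dominated, via the symmetry of $T_p$ and the entrywise bound $T_p^2 \geq T_p$ (valid since $T_p(a,a) = 1$), by $\sG^*(p,\alpha)^2 \sup_a (T_p^2 P^k T_p)(a,a)$. The main obstacle is precisely the tree-graph step: verifying that Steiner pivots can be chosen on intrinsic geodesics while preserving the edge-disjoint-witness structure required to apply the weighted Reimer inequality, and that the resulting factorization collapses cleanly to the form involving $T_p^2 P^k T_p$ stated in the lemma.
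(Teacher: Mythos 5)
Your decomposition $\mathbbm{1}(v \nleftrightarrow w) = 1 - \mathbbm{1}(v\leftrightarrow w)$, giving $\mathrm{LHS} = B - C$, is a genuinely different route from the paper. The paper does not split off the event $\{v\nleftrightarrow w\}$: it conditions on $K_u$, observes that on $\{v\nleftrightarrow w\}$ the conditional probability equals $\mathbbm{1}(u\xleftrightarrow{r_1}v)\cdot\mathbbm{1}(w\notin K_u)\,\bP_p(w\xleftrightarrow{r_2}y\text{ off }K_u\mid K_u)$, and then lower-bounds the ``off $K_u$'' probability by $\bP_p(w\xleftrightarrow{r_2}y) - \sum_{x\in K_u}\bP_p(w\leftrightarrow x)\bP_p(x\xleftrightarrow{r_2}y)$ via a single \emph{three-point} BK decomposition. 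Your main term $B$ and the paper's main term are not even the same object ($B$ is the joint expectation, the paper's is the product of marginals, and FKG only tells you $B$ is the larger of the two), but your FKG argument for $B\geq\sG_*(p,\alpha)^2$ is correct: both $\alpha^{d_\mathrm{int}(u,v)}$ and $\sum_y\alpha^{d_\mathrm{int}(w,y)}$ are non-negative increasing, and $\sum_w P^k(v,w)=1$ gives the desired product.

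The problem is the bound on $C$. You correctly identify the obstacle but do not resolve it, and the obstacle is real. Your sketch wants a decomposition on the event $\{u\xleftrightarrow{m}v\}\cap\{v\leftrightarrow w\}\cap\{w\xleftrightarrow{n}y\}$ into five disjointly-occurring events $\{u\xleftrightarrow{m_1}a\}\circ\{a\xleftrightarrow{m_2}v\}\circ\{a\leftrightarrow b\}\circ\{w\xleftrightarrow{n_1}b\}\circ\{b\xleftrightarrow{n_2}y\}$ with $m_1+m_2\leq m$, $n_1+n_2\leq n$, so that summing against $\alpha^m,\alpha^n$ produces the $G_p(\alpha)$ factors. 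This forces both pivots to sit on the intrinsic geodesics $\gamma_1$ (from $u$ to $v$) and $\gamma_2$ (from $w$ to $y$); but $\gamma_1$ and $\gamma_2$ may share edges, and when they do, the four geodesic segments $\gamma_1|_{[u,a]},\gamma_1|_{[a,v]},\gamma_2|_{[w,b]},\gamma_2|_{[b,y]}$ cannot be taken as disjoint witnesses, so Reimer does not apply directly. The standard (three-point) trick of taking the pivot closest to the third point is what the paper uses, and it produces a pivot on one geodesic connected disjointly to a single third vertex; extending this to a second geodesic with its own length constraint requires handling the case of intersecting geodesics, which your sketch does not do. The paper's conditioning-on-$K_u$ step sidesteps exactly this: it never needs to manipulate $\gamma_1$ and $\gamma_2$ simultaneously, because once $K_u$ is fixed the event $\{w\xleftrightarrow{r_2}y\text{ off }K_u\}$ depends only on edges outside $K_u$ and the error term produced is then controlled by the paper's single three-point BK application. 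So as written your argument has a genuine gap at the tree-graph step; the rest (FKG lower bound, the $(1-\alpha)\sum\alpha^m$ Abel expansion, the collapse to $\sG^{*2}\sup_a(T_p^2P^kT_p)(a,a)$ via $\sum_y G_p(\alpha)(\cdot,y)\leq\sG^*$, $\sum_a G_p(\alpha)(u,a)\leq\sG^*$, and $T_p^2\geq T_p$ entrywise) is sound in structure.
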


The proof of this lemma (along with the general strategy of proving a differential inequality for percolation by first proving a well-separated variant on the same inequality) is adapted from proofs of similar statements concerning the the $\alpha=1$ case, such as that of \cite[Lemma 3.2]{MR2551766} and \cite[Section 5]{hutchcroft20192}; the basic idea is ultimately due to Aizenman and Newman \cite{MR762034}. 
% Unlike in those proofs, we have been careful to avoid using the mass-transport principle so that our proof also works in the nonunimodular case.

\begin{proof}[Proof of \cref{lem:ultraviolet}]
We prove the estimate in the case $\alpha<1$, which is the case we are primarily interested in. The case $\alpha =1$ is simpler, and is very similar to arguments already in the literature such those appearing in \cite[Section 5]{hutchcroft20192}. (Moreover, when $G$ is quasi-transitive, the case $\alpha=1$ can be deduced from the case $\alpha<1$ by taking the limit as $\alpha \uparrow 1$.) Fix $u\in V$ and $0< \alpha <1$. Writing $\{x \xleftrightarrow{r} y\}$ for the event that $x$ and $y$ are connected by an open path of length at most $r$, we have that
\begin{multline*}
\sum_{v,w \in V} P^k(v,w) \bE_p\left[ \sum_{y\in V} \mathbbm{1} (v \nleftrightarrow w) \alpha^{d_\mathrm{int}(u,v) + d_\mathrm{int}(w,y)}\right] \\=
(1-\alpha)^2 \sum_{v,w,y \in V} \sum_{r_1,r_2 \geq 0} \alpha^{r_1+r_2} P^k(v,w)  \bP_p\left( v \nleftrightarrow w, u \xleftrightarrow{r_1} v, w \xleftrightarrow{r_2} y\right).
\end{multline*}
Observe that for each $u,v,w,y \in V$ and $r_1,r_2 \geq 0$ we have that
\[
\bP_p\left( v \nleftrightarrow w, u \xleftrightarrow{r_1} v, w \xleftrightarrow{r_2} y\mid K_u\right) = \mathbbm{1}(u \xleftrightarrow{r_1} v, w \notin K_u) \bP_p(w \xleftrightarrow{r_2} y \text{ off }K_u \mid K_u),
\]
where we write $\{
x \xleftrightarrow{r} y \text{ off } A \}$ for the event that $x$ and $y$ are connected by an open path of length at most $r$ that does not visit any vertices of $A$, including at its endpoints.
Define $Q_{r}(a,b;A) = \mathbbm{1}(a \notin A)\bP_p(a \xleftrightarrow{r} b \text{ off } A )$ for each $a,b \in V$, $A \subseteq V$, and $r \geq 0$. Since the event $\{w \xleftrightarrow{r_2} y$ off $K_u\}$ is conditionally independent given $K_u$ of the status of any edge both of whose endpoints belong to $K_u$, we have that
\begin{equation}
\label{eq:ultraviolet1}
\bP_p\left( v \nleftrightarrow w, u \xleftrightarrow{r_1} v, w \xleftrightarrow{r_2} y\mid K_u\right) = \mathbbm{1}(u \xleftrightarrow{r_1} v) Q_{p,r_2}(w,y;K_u)
\end{equation}
for every $u,v,w,y \in V$ and $r_1,r_2 \geq 0$.

We now apply a standard argument similar to that appearing in the proof of \cite[Lemma 3.2]{MR2551766} to prove that 
\begin{equation}
\label{eq:ultravioletQ}
Q_{r,A}(a,b):=\mathbbm{1}(a \notin A)\bP_p(a \xleftrightarrow{r} b \text{ off } A ) \geq \bP_{p}(a \xleftrightarrow{r} b)-\sum_{x \in A} \bP_p(a \xleftrightarrow{} x)\bP_p(x \xleftrightarrow{r} b)
\end{equation}
for every $a,b\in V$, $A \subseteq V$, an $r\geq 0$. Fix such an $a,b,A$, and $r$. The inequality holds trivially if $a \in A$, so suppose not. In this case, we have that
\[
\bP_p(a \xleftrightarrow{r} b \text{ off } A )=\bP_p(a \xleftrightarrow{r} b) - \bP_p(a \xleftrightarrow{r} b \text{ only via } A ),
\]
where we write $\{a \xleftrightarrow{r} b \text{ only via } A\}$ for the event that $a$ is connected to $b$ by a simple open path of length at most $r$ and every such path passes through $A$. Next observe that 
\[
\{a \xleftrightarrow{r} b \text{ only via } A\} \subseteq \bigcup_{x \in A} \{a \leftrightarrow x\}\circ\{x \xleftrightarrow{r} b\}.
\]
Indeed, if $\gamma$ is a simple open path of length at most $r$ from $a$ to $b$ that visits $A$ at some vertex $x$, then the portions of $\gamma$ before and after visiting $x$ are disjoint witnesses for the events $\{a \leftrightarrow x\}$ and $\{x \xleftrightarrow{r} b\}$. The claimed inequality \eqref{eq:ultravioletQ} follows by applying the union bound and the BK inequality.
Putting the estimates \eqref{eq:ultraviolet1} and \eqref{eq:ultravioletQ} together, we deduce that 
\begin{multline*}
\bP_p\left( v \nleftrightarrow w, u \xleftrightarrow{r_1} v, w \xleftrightarrow{r_2} y\mid K_u\right) = \mathbbm{1}(u \xleftrightarrow{r_1} v) \bP_{p}(w \xleftrightarrow{r_2} y)\\-\sum_{x \in K_u} \mathbbm{1}(u \xleftrightarrow{r_1} v, w \notin K_u) \bP_p(w \xleftrightarrow{} x)\bP_p(x \xleftrightarrow{r_2} y),
\end{multline*}
and hence that
\begin{multline*}
\bP_p\left( v \nleftrightarrow w, u \xleftrightarrow{r_1} v, w \xleftrightarrow{r_2} y \right) \geq \bP_p(u \xleftrightarrow{r_1} v) \bP_{p}(w \xleftrightarrow{r_2} y)
\\-\sum_{x\in V} \bP_p(u \xleftrightarrow{r_1} v, u \leftrightarrow x)  \bP_p(w \xleftrightarrow{} x)\bP_p(x \xleftrightarrow{r_2} y)
\end{multline*}
for every $v,w,y \in V$ and $r_1,r_2 \geq 0$.
Summing over $v,w,y \in V$ and $r_1,r_2 \geq 0$ yields that
\begin{align*}
\sum_{v,w \in V} P^k&(v,w) \bE_p\left[ \sum_{y\in V} \mathbbm{1} (v \nleftrightarrow w) \alpha^{d_\mathrm{int}(u,v) + d_\mathrm{int}(w,y)}\right]\\
&= (1-\alpha)^2 \sum_{v,w,y \in V} \sum_{r_1,r_2 \geq 0} \alpha^{r_1+r_2} P^k(v,w)  \bP_p\left( v \nleftrightarrow w, u \xleftrightarrow{r_1} v, w \xleftrightarrow{r_2} y\right)
\\
&\geq (1-\alpha)^2\sum_{v,w,y \in V} \sum_{r_1,r_2 \geq 0} \alpha^{r_1+r_2} P^k(v,w) \bP_p(u \xleftrightarrow{r_1} v) \bP_p(w \xleftrightarrow{r_2} y)
\\
&\hspace{2em}-(1-\alpha)^2\sum_{v,w,y,x \in V} \sum_{r_1,r_2 \geq 0} \alpha^{r_1+r_2} P^k(v,w)
\bP_p(u \xleftrightarrow{r_1} v, u \leftrightarrow x)  \bP_p(w \xleftrightarrow{} x)\bP_p(x \xleftrightarrow{r_2} y)
\\
&\geq \sG_*(p,\alpha)^2 - (1-\alpha) \sum_{v,w,x \in V} \sum_{r_1\geq0} \alpha^{r_1} P^k(v,w)
\bP_p(u \xleftrightarrow{r_1} v, u \leftrightarrow x)  \bP_p(w \xleftrightarrow{} x)\sG^*(p,\alpha)
\end{align*}
for every $v\in V$, $p\in [0,1]$, and $0 < \alpha < 1$ such that the second term on the right  of the last line is finite.
To control this second term, first note that a standard BK inequality argument yields that
\[
\bP_p(u \xleftrightarrow{r_1} v, u \leftrightarrow x) \leq \sum_{a\in V} \bP_p(u \xleftrightarrow{r_1} a) \bP_p(a \leftrightarrow v) \bP_p(a \leftrightarrow x)
\]
for every $u,v,x \in V$ and $r_1 \geq 0$, so that
\begin{multline*}
(1-\alpha) \sum_{v,w,x \in V} \sum_{r_1\geq0} \alpha^{r_1} P^k(v,w)
\bP_p(u \xleftrightarrow{r_1} v, u \leftrightarrow x)  \bP_p(w \xleftrightarrow{} x) 
\\\leq 
(1-\alpha) \sum_{a\in V} \sum_{r_1\geq0} \alpha^{r_1}\bP_p(u \xleftrightarrow{r_1} a) \sum_{v,w,x \in V}  P^k(v,w)
 \bP_p(a \leftrightarrow v) \bP_p(a \leftrightarrow x)  \bP_p(w \xleftrightarrow{} x)
 \\=
 (1-\alpha) \sum_{a\in V} \sum_{r_1\geq0} \alpha^{r_1}\bP_p(u \xleftrightarrow{r_1} a) \left[T_p^2 P^k T_p\right](a,a) \leq \sG^*(p,\alpha)\sup_{a\in V}\left[T_p^2 P^k T_p\right](a,a) 
% \leq \|P\|^k_{2\to 2}\|T_p\|_{2\to 2}^3 \sG^*(p,\alpha).
\end{multline*}
for every $v\in V$, $p\in [0,1]$, and $0 < \alpha < 1$.
Putting everything together, we get that
\[
\sum_{v,w \in V} P^k(v,w) \bE_p\left[ \sum_{y\in V} \mathbbm{1} (v \nleftrightarrow w) \alpha^{d_\mathrm{int}(u,v) + d_\mathrm{int}(w,y)}\right] \geq \sG_*(p,\alpha)^2 -  \sG^*(p,\alpha)^2 \sup_{a\in V}\left[T_p^2 P^k T_p\right](a,a).
\]
for every $u\in V$, $p\in [0,1]$ and $0 \leq \alpha <1$ such that $\sG^*(p,\alpha)<\infty$, as claimed. (It may seem that we need to assume that $\sG^*(p,\alpha)^2 \sup_{a\in V}\left[T_p^2 P^k T_p\right](a,a)$ is finite, but in fact the inequality is trivial if $\sG^*(p,\alpha)^2 \sup_{a\in V}\left[T_p^2 P^k T_p\right](a,a)$ is infinite and $\sG_*(p,\alpha) \leq \sG^*(p,\alpha)$ is not.)
% The open triangle condition implies that $\sup_{a\in V}\left[T_p^2 P^k T_p\right]$ tends to zero as $k \to \infty$. Thus, we may take $k=k(p,\alpha)$ minimal such that
% \[
% \sup_{a\in V}\left[T_p^2 P^k T_p\right](a,a) \leq \frac{1}{2}(p \alpha)^{2C},
% \]
% so that
% \begin{equation}
% \label{eq:ultraviolet2}
% \sum_{v,w \in V} P^k(v,w) \bE_p\left[ \sum_{y\in V} \mathbbm{1} (v \nleftrightarrow w) \alpha^{d_\mathrm{int}(u,v) + d_\mathrm{int}(w,y)}\right] \geq \frac{1}{2}\sG_*(p,\alpha)^2.
% \end{equation}
% for every $p\in I_\nabla$, $\alpha \in (0,1)$, and $u\in V$.
% This choice of $k=k(p,\alpha)$ is clearly bounded on compact subsets of $I_\nabla \times (0,1]$ and decreasing in $\alpha$, and we may take $\alpha \uparrow 1$ to deduce that \eqref{eq:ultraviolet2} holds for every $p\in I_\nabla$, $\alpha \in (0,1]$, and $u\in V$, completing the proof of the claim.
\end{proof}

We now deduce \cref{prop:alpha_dif_ineq} from \cref{lem:ultraviolet}.

\begin{proof}[Proof of \cref{prop:alpha_dif_ineq}]
For each $u,v,w,y \in V$, let $\gamma=\gamma_{v,w}$ be a geodesic from $v$ to $w$ in $G$, and let $\sA(u,v,w,y)$ be the event that that $u,v,w,$ and $y$ all belong to the same cluster, that every edge of $\gamma$ is open, and that every open path from $u$ to $y$ passes through a vertex of $\gamma$. Since $|\gamma|\leq k$ when $P^k(v,w)>0$, we have that
\begin{multline*}
\sum_{v,w,y \in V}P^k(v,w)\bE_p\left[ \alpha^{d_\mathrm{int}(u,y)} \mathbbm{1}(\sA(u,v,w,y))\right] 
\\
\geq \alpha^k \sum_{v,w,y \in V}P^k(v,w)\bE_p\left[ \alpha^{d_\mathrm{int}(u,v) + d_\mathrm{int}(w,y)} \mathbbm{1}(\sA(u,v,w,y))\right].
\end{multline*}
We claim furthermore that
\begin{multline}
\label{eq:sA_open_claim}
\alpha^k \sum_{v,w,y \in V}P^k(v,w)\bE_p\left[ \alpha^{d_\mathrm{int}(u,v) + d_\mathrm{int}(w,y)} \mathbbm{1}(\sA(u,v,w,y))\right]\\
\geq (p\alpha)^k \sum_{u,w,y \in V}P^k(v,w)\bE_p\left[ \alpha^{d_\mathrm{int}(u,v) + d_\mathrm{int}(w,y)} \mathbbm{1}(v \nleftrightarrow w)\right].
\end{multline}
Indeed, let $\omega$ and $\omega'$ be two independent instances of Bernoulli-$p$ bond percolation, and let $\omega''$ be defined by letting $\omega''(e)=\omega'(e)$ if $e$ is traversed by $\gamma$ and by letting $\omega''(e)= \omega(e)$ otherwise, so that $\omega''$ is also distributed as Bernoulli-$p$ bond percolation. 
% Write $d_\mathrm{int}$ and $d_\mathrm{int}''$ for intrinsic distances in $\omega$ and $\omega''$ respectively. 
Condition on $\omega$, and suppose that the event $\{u \leftrightarrow v, v \nleftrightarrow w, w \leftrightarrow y\}$ holds for $\omega$. The conditional probability that every edge $e$ traversed by $\gamma$ is $\omega''$-open is $p^{d(v,w)} \geq p^k$, and on this event the event $\sA(u,v,w,y)$ holds for $\omega''$. Moreover, on this event we have that $\omega'' \geq \omega$ and hence that all intrinsic distances are smaller in $\omega''$ than in $\omega$, so that the claimed inequality follows easily.

Let $u,y \in V$. Suppose that $u$ and $y$ belong to the same cluster, let $\eta$ be an intrinsic geodesic from $u$ to $y$, and let $\eta_i$ be the $i$th vertex visited by $\eta$. Then we have the coarse bounds
\begin{align}
\sum_{v,w\in V} P^k(u,w)\mathbbm{1}(\sA(u,v,w,y)) &\leq \sum_{i=0}^{d_\mathrm{int}(u,y)} \sum_{v,w \in V} \mathbbm{1}(\gamma_{v,w} \text{ visits } \eta_i) P^k(u,w)
\nonumber\\
% which we can bound very coursely by
% \[
% \sum_{v,w\in V} P^k(u,w)\mathbbm{1}(\sA(u,v,w,y)) 
&\leq \sum_{i=0}^{d_\mathrm{int}(u,y)} |B(\eta_i,k)|^2 \leq d_\mathrm{int}(v,y) \sup_{v\in V} |B(v,k)|^2.
\end{align}
Taking expectations and rearranging, it follows that
% It follows in particular that
\[
\bE_p\left[ d_\mathrm{int}(u,y) \alpha^{d_\mathrm{int}(u,y)}\right] \geq \left[\sup_{v\in V} |B(v,k)|^2\right]^{-1} \sum_{v,w \in V}P^k(v,w)\bE_p\left[ \alpha^{d_\mathrm{int}(u,y)} \mathbbm{1}(\sA(u,v,w,y))\right] 
\]
for every $u,y \in V$, and hence by \eqref{eq:sA_open_claim} that
\begin{multline*}
\bE_p\left[ \sum_{y\in V} d_\mathrm{int}(u,y) \alpha^{d_\mathrm{int}(u,y)}\right]
\\\geq 
(p\alpha)^k \left[\sup_{v\in V} |B(v,k)|^2\right]^{-1}\sum_{u,w,y \in V}P^k(v,w)\bE_p\left[ \alpha^{d_\mathrm{int}(u,v) + d_\mathrm{int}(w,y)} \mathbbm{1}(v \nleftrightarrow w)\right].
\end{multline*}
Applying \cref{lem:ultraviolet}, we obtain that
\begin{equation*}
\frac{\partial}{\partial \alpha} \sG(p,\alpha,u) \geq (p \alpha)^k \left[\sup_{v\in V} |B(v,k)|^2\right]^{-1}\left[1-  (p\alpha)^{-C} \sup_{a\in V}\left[T_p^2 P^k T_p\right](a,a)\right]\sG^*(p,\alpha)^2.
\end{equation*}
It follows by definition of the open triangle condition that there exists $k(p,\alpha)$, bounded on compact subsets of $I_\nabla \times (0,1]$, such that
\[
(p\alpha)^{-C} \sup_{a\in V}\left[T_p^2 P^{k(p,\alpha)} T_p\right](a,a) \leq \frac{1}{2}
\]
and hence that
\begin{equation*}
\frac{\partial}{\partial \alpha} \sG(p,\alpha,u) \geq \frac{1}{2} (p \alpha)^{k(p,\alpha)} \left[\sup_{v\in V} \left|B\left(v,k\left(p,\alpha\right)\right)\right|^2\right]^{-1}\sG^*(p,\alpha)^2 
\end{equation*}
for every $p \in I_\nabla$ and $\alpha \in (0,1]$ such that $\sG^*(p,\alpha)<\infty$. This is easily seen to imply the claim.
\end{proof}

% It remains to deduce \cref{prop:Gpalpha_estimate} from \cref{prop:alpha_dif_ineq}.

\section{Expected and almost sure growth rates coincide}
\label{sec:KestenStigum}

In this section we prove \cref{thm:KestenStigum}, which states that the expected and almost sure exponential growth rates of an infinite cluster always coincide.
Note that an easier proof of this theorem is possible in the case $p_c < p < p_{2\to 2}$ by applying \cref{thm:L2_subexponential}; in the general supercritical case we have to contend with the possibility that the subexponential corrections to growth are unbounded, which make the second moment calculations more involved.

% Compared to the setting of branching processes, there is an additional complication that the rate of growth of the expectation might not be a pure exponential, and might be of the form, say, $n^k \alpha^n$ for some $k\geq 1$ and $\alpha >1$. On the other hand, percolation on transitive graphs is analogous to \emph{finite variance} branching processes, which allows for a second moment analysis that was not available at the level of generality considered by Kesten and Stigum.

\begin{proof}[Proof of \cref{thm:KestenStigum}]
In contrast to the rest of the paper, we will allow all the constants appearing in this proof to depend on $p$. 
The almost sure upper bound 
\begin{equation}
\limsup_{n\to\infty} \frac{1}{r}  \log  |\partial B_\mathrm{int}(v,r)| \leq \limsup_{n\to\infty} \frac{1}{r}  \log  |B_\mathrm{int}(v,r)| \leq \gamma_\mathrm{int}(p)
\end{equation}
follows immediately from Markov's inequality and Borel-Cantelli. Thus, to prove the theorem it suffices to prove that the event
\[
\sA_v = \left\{\liminf_{r\to\infty} e^{-\gamma_\mathrm{int}(p)r}|\partial B_\mathrm{int}(v,r)|>0\right\}
\]
satisfies $\bP_p(\sA_v \mid v\to \infty)=1$ for every $v\in V$. This claim is trivial when $\gamma_\mathrm{int}(p)=0$, so we may assume that it is positive. 
It is a consequence of the indistinguishability theorem of H\"aggstr\"om, Peres, and Schonmann \cite[Theorem 4.1.6]{HPS99} that $\bP_p(\sA_v \mid v \to \infty)$ belongs to $\{0,1\}$ and does not depend on $v$, so that it suffices to prove that $\bP_p(\sA_v \mid v \to \infty)>0$ for some $v$. (If $G$ is unimodular then one can alternatively use the indistinguishability theorem of Lyons and Schramm \cite{LS99} in this argument to achieve the same effect.)

\medskip

Let $h_p$, defined by $e^{h_p(r)}=e^{-\gamma_\mathrm{int}(p) r} \sup_{v\in V}\bE_p |B_\mathrm{int}(v,r)|$ for each $r\geq 0$,  describe the subexponential correction to growth of the expected cluster size as in \eqref{eq:h_def}. Let $\lambda \geq 1$ and consider the set 
\[\mathscr{R}_\lambda
=\left\{r \geq 0 : h_p(r) \geq \max_{0\leq \ell \leq r} h_p(\ell) -\lambda 
\, \text{ and } \,
\sup_{v\in V} \bE_p \left[ |B_\mathrm{int}(v,r)|^2 \right] \leq \lambda \sup_{v\in V} \bE_p \left[ |B_\mathrm{int}(v,r)| \right]^2 \right\}.
\]
We claim that there exists $\lambda\geq 1$ such that $\mathscr{R}_{\lambda}$ is infinite.
To prove this, we first use a union bound and Reimer's inequality as in \eqref{eq:intrinsic_2nd_moment_Reimer} to obtain that
\begin{equation}
\bE_p \left[ |B_\mathrm{int}(u,r)|^2 \right] \leq \sum_{\ell=0}^r \bE_p \left[ | B_\mathrm{int}(u,\ell )| \right] \sup_{v \in V} \bE_p \left[ | B_\mathrm{int}(v,r-\ell )| \right]^2
\end{equation}
for every $u\in V$ and $r\geq 0$.
Taking the supremum over $u$, this inequality may then be rewritten in terms of $h_p$ and $\gamma_\mathrm{int}$ as
\begin{multline}
\sup_{v\in V} \bE_p \left[ |B_\mathrm{int}(v,r)|^2 \right] 
\leq
\sum_{\ell=0}^r \exp \left[\gamma_\mathrm{int}(p)\ell + 2 \gamma_\mathrm{int}(p) (r-\ell) + h_p(\ell)+2h_p(r-\ell) \right]
\\
=
\sup_{v\in V}\bE_p \left[ |B_\mathrm{int}(v,r)| \right]^2 \sum_{\ell=0}^r \exp \left[-\gamma_\mathrm{int}(p)\ell - (2h_p(r)-h_p(\ell)- 2h_p(r-\ell)) \right].
\end{multline}
% [This is totally wrong! Superadditivity has been used instead of subadditivity :-( ]
We now split into two cases according to whether or not $h_p(r)$ is bounded as $r\to \infty$.
If $h_p$ is bounded by some constant $C_1$, then the sum on the right hand side of the last line is also bounded by the constant $C_2 = \sum_{\ell=0}^\infty \exp \left[-\gamma_\mathrm{int}(p)\ell + 3C_1 \right] $. Meanwhile, since $h_p$ is non-negative, we trivially have that $h_p(r) \geq \max_{0\leq \ell \leq r} h_p(\ell) - C_1$ for every $r\geq 1$ so that $\mathscr{R}_{C_1 \vee C_2}=\mathbb{N}$ is infinite in this case as claimed. On the other hand, if $h_p$ is not bounded, then the set of running maxima $\mathscr{R}' = \{r \geq 0 : h_p(r) = \max_{0 \leq \ell \leq r} h_p(r-\ell)\}$ must be infinite, and if $r \in \mathscr{R}'$ then
\begin{align}\bE_p \left[ |B_\mathrm{int}(v,r)|^2 \right] 
&\leq
\sup_{v\in V}\bE_p \left[ |B_\mathrm{int}(v,r)| \right]^2 \sum_{\ell=0}^r \exp \left[-\gamma_\mathrm{int}(p)\ell + h_p(\ell) \right] 
\nonumber\\
&\leq \sup_{v\in V}\bE_p \left[ |B_\mathrm{int}(v,r)| \right]^2 \sum_{\ell=0}^\infty \exp \left[-\gamma_\mathrm{int}(p)\ell + h_p(\ell)\right].
\end{align}
Since $\lim_{\ell\to \infty}\frac{1}{\ell}h_p(\ell)=0$, the series on the last line converges. Thus, if we set the constant $C_3$  to be $\sum_{\ell=0}^\infty \exp \left[-\gamma_\mathrm{int}(p)\ell + h_p(\ell)\right]$ then $\mathscr{R}_{C_3}$ contains $\mathscr{R}'$ and is therefore infinite since we assumed $h_p$ to be unbounded. This completes the proof of the claim.

\medskip

% Suppose for contradiction that 
% \begin{equation}
% \liminf_{r\to\infty} e^{-\gamma_\mathrm{int}(p)r}|\partial B_\mathrm{int}(v_0,r)| =0
% \end{equation}
% almost surely. 
Fix $\lambda$
such that $\mathscr{R}_\lambda$ is infinite. Since $G$ is quasi-transitive, we have by the pigeonhole principle that there exists $v_0\in V$ such that
\begin{equation}\mathscr{R}_{\lambda,v_0}
=\mathscr{R}_\lambda \cap \left\{\bE_p \left[ |B_\mathrm{int}(v_0,r)| \right] = \sup_{v\in V} \bE_p \left[ |B_\mathrm{int}(v,r)| \right]\right\}
\end{equation}
is infinite also.  Note that if $r \in \mathscr{R}_{\lambda,v_0}$ then we have by the definitions that
\begin{equation}
\label{eq:sR_property}
e^{\gamma_\mathrm{int}(p)\ell} \sup_{v\in V} \bE_p \left[ |B_\mathrm{int}(v,r-\ell)| \right] = e^{\gamma_\mathrm{int}(p)r + h_p(r-\ell)} \leq e^{\gamma_\mathrm{int}(p)r + h_p(r) +\lambda} = e^\lambda \bE_p \left[ |B_\mathrm{int}(v_0,r)| \right]
\end{equation}
for every $0\leq \ell \leq r$.
 For each $\eps>0$, let 
 \[
R_\eps = \inf\Bigl\{r \geq 0 :|\partial B_\mathrm{int}(v_0,r)| \leq \eps e^{\gamma_\mathrm{int}(p)r} \Bigr\},
 \] 
where we take $\inf \emptyset = \infty$. It suffices to prove that there exists $\eps>0$ such that $\bP_p(R_\eps=\infty)>0$. Let $\cF_\eps$ be the $\sigma$-algebra generated by $R_\eps$ and $B_\mathrm{int}(v_0,R_\eps)$.
 % be minimal such that $|B_\mathrm{int}(v,R_\eps)| \leq \eps e^{\gamma_\mathrm{int}(p)r}$, letting $R_\eps = \infty$ if there is no such $r$.
Conditional on $\cF_\eps$, we have for each $v \in \partial B_\mathrm{int}(v,R_\eps)$ that the set of $w\in V$ that are connected to $v$ by an open path of length at most $r-R_\eps$ that is disjoint from $B_\mathrm{int}(v,R_\eps)$ except at its endpoints is stochastically dominated by the unconditioned law of $B_\mathrm{int}(v,r-R_\eps)$. Thus for each $r \geq 0$ and $0 \leq \ell \leq r$ we have that
  % if $R_\eps \leq r$ then
\begin{align}
\bE_p \left[ |B_\mathrm{int}(v_0,r)| \mid \cF_\eps \right] &\leq \begin{cases}
|B_\mathrm{int}(v_0,R_\eps)| + 
 \eps e^{\gamma_\mathrm{int}(p)R_\eps } \sup_{v\in V} \bE_p|B_\mathrm{int}(v,r-R_\eps)| & R_\eps \leq r\\
|B_\mathrm{int}(v_0,r)| & R_\eps > r
 \end{cases}
% \\
% &=|B_\mathrm{int}(v_0,R_\eps)| + \eps \exp\left[\gamma_\mathrm{int}(p)r + h_p(r-\ell)\right]
% \leq \eps e^\lambda  \bE_p \left[ |B_\mathrm{int}(v_0,r)| \right]
\end{align}
 and hence by \eqref{eq:sR_property} that
\begin{align}
\bE_p \left[ |B_\mathrm{int}(v_0,r)| \mid \cF_\eps \right] &\leq \begin{cases}
|B_\mathrm{int}(v_0,R_\eps)| + 
 \eps e^\lambda \bE_p\left[|B_\mathrm{int}(v_0,r)|\right] & R_\eps \leq r\\
|B_\mathrm{int}(v_0,r)| & R_\eps > r
 \end{cases}
% \\
% &=|B_\mathrm{int}(v_0,R_\eps)| + \eps \exp\left[\gamma_\mathrm{int}(p)r + h_p(r-\ell)\right]
% \leq \eps e^\lambda  \bE_p \left[ |B_\mathrm{int}(v_0,r)| \right]
\end{align}
for every $r \in \mathscr{R}_{\lambda,v_0}$ and $\eps>0$. Taking expectations, it follows that
\begin{align}
&\bE_p\left[|B_\mathrm{int}(v_0,r)| \mathbbm{1}(R_\eps \leq \ell) \right] 
\nonumber\\
&\hspace{3.5cm}\leq \bE_p\left[|B_\mathrm{int}(v_0,R_\eps)|\mathbbm{1}(R_\eps \leq \ell)\right] + 
 \eps  e^{\gamma_\mathrm{int}(p) R_\eps}\sup_{v\in V}\bE_p\left[|B_\mathrm{int}(v,r-R_\eps)|\right] \bP_p(R_\eps \leq \ell)
 \nonumber\\
&\hspace{3.5cm}\leq \bE_p\left[|B_\mathrm{int}(v_0,R_\eps)|\mathbbm{1}(R_\eps \leq \ell)\right] + 
 \eps e^\lambda \bE_p\left[|B_\mathrm{int}(v_0,r)|\right] \bP_p(R_\eps \leq \ell)
 \nonumber\\
&\hspace{3.5cm}\leq \bE_p\left[|B_\mathrm{int}(v_0,\ell)|\right] + 
 \eps e^\lambda \bE_p\left[|B_\mathrm{int}(v_0,r)|\right]
 \nonumber\\
&\hspace{3.5cm}\leq e^{\lambda}\left(e^{-\gamma_\mathrm{int}(p)(r-\ell)}+\eps\right) \bE_p\left[|B_\mathrm{int}(v_0,r)|\right]
\end{align}
for every $r \in \mathscr{R}_{\lambda,v_0}$, $\eps>0$, and $0\leq \ell \leq r$, where we applied \eqref{eq:sR_property} in the first and last inequalities.
On the other hand, we have by Cauchy-Schwarz and the definition of $\sR_{\lambda,v_0}$ that
\begin{align}
\bE_p \left[ |B_\mathrm{int}(v_0,r)| \mathbbm{1}(R_\eps > \ell) \right] &\leq \bE_p \left[ |B_\mathrm{int}(v_0,r)|^2  \right]^{1/2} \bP_p(R_\eps >\ell)^{1/2} 
\nonumber\\&\leq \lambda^{1/2}\bE_p\left[|B_\mathrm{int}(v_0,r)|\right]\bP_p(R_\eps >\ell)^{1/2}
\end{align}
for every $r\in \mathscr{R}_{\lambda,v_0}$, $\eps>0$, and $0\leq \ell \leq r$. Putting these two bounds together yields that
\begin{equation}
\bE_p \left[ |B_\mathrm{int}(v_0,r)|  \right] 
\leq \left(e^\lambda \eps + e^\lambda e^{-\gamma_\mathrm{int}(p)(r-\ell)} + \lambda^{1/2}\bP_p(R_\eps >\ell)^{1/2} \right)\bE_p\left[|B_\mathrm{int}(v_0,r)|\right]
\end{equation}
for every $r \in \mathscr{R}_{\lambda,v_0}$, $\eps>0$, and $0\leq \ell \leq r$. 
Rearranging, we deduce that
\begin{equation}
\bP_p(R_\eps >\ell)^{1/2} \geq \frac{1}{\lambda^{1/2}}\left[1- e^\lambda \eps + e^{\lambda-\gamma_\mathrm{int}(p)(r-\ell)} \right]
\end{equation}
for every $r \in \mathscr{R}_{\lambda,v_0}$, $\eps>0$, and $0\leq \ell \leq r$. Since $\mathscr{R}_{\lambda,v_0}$ is infinite and $\gamma_\mathrm{int}(p)$ is positive, it follows by taking the limit as $r\to \infty$ along $\mathscr{R}_{\lambda,v_0}$ that
\begin{equation}
\bP_p(R_\eps >\ell)^{1/2} \geq \frac{1}{\lambda^{1/2}}\left[1- e^\lambda \eps \right]
\end{equation}
for every $\eps>0$ and $\ell \geq 0$.
If $\eps < e^{-\lambda}$ then the right hand side is positive and does not depend on $\ell$, so that 
\begin{equation}\bP_p(R_\eps = \infty) = \lim_{\ell\to \infty} \bP_p(R_\eps >\ell) \geq \lambda^{-1}\left[1- e^\lambda \eps \right]^2>0\end{equation}
for every $\eps < e^{-\lambda}$. This completes the proof.
\end{proof}

\subsection{Positivity of the intrinsic growth on amenable graphs of exponential growth}

In this section we prove \cref{thm:amenable}.

\begin{proof}[Proof of \cref{thm:amenable}]
The case that $G$ is nonamenable follows from either \cite[Theorem 3.1]{BLS99} (yielding that the infinite cluster always contains a subgraph with positive Cheeger constant) or the results of \cite{HermonHutchcroftSupercritical} (since anchored expansion implies exponential growth). As such, it suffices to consider the case that $G$ is amenable, in which case the infinite cluster is unique for every $p>p_c$. Fix one such $p>p_c$. The Harris-FKG inequality implies that
\[
\bP_p(u \leftrightarrow v) \geq \bP_p(u \leftrightarrow \infty) \bP_p(v \leftrightarrow \infty) \geq \theta_*(p)^2
\]
for every $u,v\in V$, where we define $\theta_*(p)=\min_v \bP_p(v\leftrightarrow \infty)$. Since $G$ is quasi-transitive, it follows by continuity of measure that for each $r\geq 1$ there exists $R(r,p)<\infty$ such that
\[
\min \bigl\{\bP_p\bigl(u \xleftrightarrow{R(r,p)} v\bigr):u,v\in V, d(u,v)\leq r\bigr\} \geq \frac{1}{2}\theta_*(p)^2,
\]
where $\{u \xleftrightarrow{R(r,p)} v\}$ denotes the event that $u$ and $v$ are connected by a path of length at most $R(r,p)$. Note that if $u$ and $v$ have distance at most $kr$ then there exists a sequence $u=u_0,u_1,\ldots,u_k=v$ such that $d(u_i,u_{i+1})\leq r$ for each $0\leq i \leq k-1$, and if the events $\{u_i \xleftrightarrow{R(r,p)} u_{i+1}\}$ all hold for every $0\leq i \leq k-1$ then $u$ is connected to $v$ by an open path of length at most $kR(r,p)$.
Applying Harris-FKG again, we deduce that
\[
\min \bigl\{\bP_p\bigl(u \xleftrightarrow{kR(r,p)} v\bigr):u,v\in V, d(u,v)\leq kr\bigr\} \geq \left(\frac{1}{2}\theta_*(p)^2\right)^k
\]
for every $k,r\geq 1$. Letting $\gamma=\lim_{n\to\infty}\frac{1}{n} \log |B(v,n)|$ be the exponential growth rate of $G$, it follows that
\begin{align*}
\gamma_\mathrm{int}(p) &\geq \lim_{k\to\infty} \frac{1}{kR(r,p)} \log\left[ |B(v,kr)| \min \bigl\{\bP_p\bigl(u \xleftrightarrow{kR(r,p)} v\bigr):u,v\in V, d(u,v)\leq kr\bigr\}\right]
\\
&\geq \frac{r\gamma}{R(r,p)} - \frac{1}{R(r,p)} \log \frac{2}{\theta_*(p)^2}
\end{align*}
for every $r\geq 1$. The claim follows by taking $r$ sufficiently large that $r\gamma > \log \frac{2}{\theta_*(p)^2}$.
\end{proof}

\section{The anchored Cheeger constant}
\label{subsec:anchored_Cheeger}

In this section we prove \cref{thm:Cheeger}. We begin by establishing the upper bound.

\begin{lemma}
\label{lem:Cheeger_upper}
Let $G$ be a connected, locally finite, quasi-transitive graph, and suppose that $p_c<p_{2\to 2}$. Then there exists a constant $C$ such that for every $p_c < p \leq 1$, every infinite cluster $K$ in Bernoulli-$p$ bond percolation on $G$ has 
\[
\Phi^*(K) \leq C (p-p_c)^2
\]
$\bP_p$-almost surely.
\end{lemma}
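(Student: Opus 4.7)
The plan is to exhibit, for arbitrarily large $n$, a finite connected subset $W_n \subseteq K$ containing $v$ with $|\partial_E W_n|/|W_n| \leq C(p-p_c)^2$; since $\sum_{w \in W_n} \deg_K(w) \geq |W_n|$, this upper-bounds the Cheeger ratio. The case $p \geq p_c+\delta$ is trivial as $\Phi^*(K) \leq 1$ always, so only the near-critical regime is substantive.

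The naive choice $W_n = B_\mathrm{int}(v,n)$ does not suffice: by \cref{thm:slightly_subexponential}, conditional on $\{v \leftrightarrow \infty\}$ the intrinsic ball has volume $\asymp (p-p_c)^{-2} e^{\gamma_\mathrm{int}(p) n}$, and a telescoping argument shows its intrinsic boundary has expected size of order $(p-p_c)^{-1} e^{\gamma_\mathrm{int}(p) n}$, yielding a Cheeger ratio only of order $p-p_c$. To gain the extra factor, I would instead use the ``backbone plus bushes'' picture of slightly supercritical clusters: along a path of length $n$ in the backbone of $K$, the volume is dominated by attached finite bushes whose mean volume per backbone vertex is $\chi_\mathrm{finite}(p) := \bE_p[|K_v|\mathbbm{1}(|K_v|<\infty)] \asymp 1/(p-p_c)$, while the edge boundary scales like $n$ times the density of neighbors leading to alternative infinite branches, which is of order $\bP_p(v\leftrightarrow\infty) \asymp p-p_c$. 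Hence the expected ratio is of order $(p-p_c)^2$.

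Concretely, I would fix an infinite simple path $\pi = u_0, u_1, \ldots$ in the backbone of $K$ anchored near $v$ (prepending a short finite path from $v$ to $u_0$ if needed, which is negligible for the asymptotics), set $\pi_n = \{u_0,\ldots,u_n\}$, and define
\[
W_n = \{v\} \cup \pi_n \cup \bigl(\text{all vertices of } K \text{ in finite components of } K \setminus \pi_n \text{ adjacent to } \pi_n\bigr).
\]
Then $W_n$ is finite, connected, and contains $v$. Using the finite cluster estimates of \cite{hutchcroft2020slightly} together with indistinguishability of the infinite cluster (via Lyons--Schramm or H\"aggstr\"om--Peres--Schonmann) and Birkhoff's theorem applied to the stationary sequence of local neighborhoods along $\pi$, one establishes the almost sure asymptotics $|W_n|/n \to \lambda_V \asymp 1/(p-p_c)$ and $|\partial_E W_n|/n \to \lambda_B \asymp p-p_c$, so that $|\partial_E W_n|/|W_n| \to \lambda_B/\lambda_V \asymp (p-p_c)^2$.

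The main obstacle is the boundary upper bound $\lambda_B \preceq p-p_c$, which is what delivers the Cheeger upper bound. Each edge contributing to $\partial_E W_n$ has one endpoint on $\pi_n$ and the other in an infinite component of $K\setminus \pi_n$; a union bound combined with $\bP_p(v\leftrightarrow \infty) \asymp p-p_c$ from \cite{hutchcroft2020slightly} bounds the expected number of such edges per backbone vertex by a constant times $p-p_c$. The complementary volume bound $\lambda_V \succeq 1/(p-p_c)$ is then straightforward from the expected finite bush volume being of order $\chi_\mathrm{finite}(p)$. A technical subtlety is verifying that the bushes are genuinely finite after removing $\pi_n$, which is automatic once $\pi$ is chosen within the backbone: such bushes are precisely the finite ``fingers'' of $K$ hanging off the backbone.
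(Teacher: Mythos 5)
Your intuition---that the right sets to test in the anchored Cheeger constant have a ``skeleton plus finite bushes'' structure, with the skeleton contributing a boundary of density $\asymp(p-p_c)$ and each skeleton vertex carrying a bush of mean volume $\asymp(p-p_c)^{-1}$---is exactly the right picture, and it is also what drives the paper's proof. But your specific implementation via a backbone path has two genuine gaps. First, the boundary bound $\lambda_B\preceq p-p_c$: you argue that the expected number of edges from a backbone vertex to an infinite component of $K\setminus\pi_n$ is $\preceq p-p_c$ ``by a union bound combined with $\bP_p(v\leftrightarrow\infty)\asymp p-p_c$''. This does not go through, because the conditional law of the configuration given the backbone path $\pi$ (or even given the finite segment $\pi_n$) is \emph{not} i.i.d. Bernoulli --- the path is chosen inside the cluster after seeing the whole configuration, so you are conditioning on a highly non-trivial event that can strongly bias the environment around each $u_i$. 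In particular $\bP_p(u_i$ has a fresh neighbor connected to $\infty\mid\pi_n)\asymp p-p_c$ is not something you can read off from $\theta(p)\asymp p-p_c$. Second, the appeal to Birkhoff's theorem presupposes that the sequence of local neighborhoods along $\pi$ is stationary, but no construction of such a stationary backbone path on a quasi-transitive graph is given, and it is far from automatic; you would need to define $\pi$ in a way that is equivariant and ergodic, and then still face the conditioning issue above.

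The paper sidesteps both problems by replacing the backbone path with intrinsic balls and their hulls, and by using an \emph{exploration process}: after running a breadth-first search to a stopping time $T_j$ (defined as the $j$th radius at which $|\partial B_{\mathrm{int}}(v,r)|\leq 2C(p-p_c)|B_{\mathrm{int}}(v,r)|$, which is shown to happen for infinitely many $r$ using $\gamma_{\mathrm{int}}(p)\asymp p-p_c$ from \cref{prop:growth_estimate} and \cref{thm:KestenStigum}), the configuration on unexplored edges is still i.i.d.\ Bernoulli. This makes the key first- and second-moment bounds on $X_{T_j}$ --- the number of sphere vertices connected to infinity off the ball --- entirely clean (via $\theta^*(p)\asymp p-p_c$ and \cref{lem:Ainfty_variance}), and the candidate sets are the hulls $\operatorname{Hull}(v,T_j)$, whose boundary in $K$ is at most $M X_{T_j}$. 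So while your strategy is morally the same decomposition, the exploration/stopping-time mechanism is the missing ingredient that makes the ``fraction $\asymp p-p_c$ of skeleton vertices lead to fresh infinite branches'' estimate rigorous; without it, the conditional probability bound you need is not justified.
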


Before proving this lemma, we first prove the following simple concentration lemma for the number of vertices in a set that belong to an infinite cluster. We define $\theta^*(p) := \sup_{v\in V} \bP_p(v \leftrightarrow \infty)$.

\begin{lemma}
\label{lem:Ainfty_variance}
Let $G=(V,E)$ be a countable graph, and let $0\leq p <p_{2\to 2}(G)$. Let $A \subset V$ be a finite set of vertices and let $A_\infty = \{v\in A : v\leftrightarrow \infty\}$. Then the variance of $|A_\infty|$ satisfies
\[
\mathbf{Var}_p |A_\infty| := \bE_p\left[\left(|A_\infty|-\bE_p\left[|A_\infty|\right]\right)^2\right] \leq \theta^*(p) \|T_p\|_{2\to 2}^2 |A|.\]
\end{lemma}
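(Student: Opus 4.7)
The plan is to expand the variance as a double sum of covariances and bound each covariance by a tree-graph type inequality, after which the $\ell^2$ operator norm bound on $T_p$ will do the rest. Writing $\mathbf{Var}_p|A_\infty| = \sum_{u,v\in A}\mathbf{Cov}_p(\mathbbm{1}(u\leftrightarrow\infty),\mathbbm{1}(v\leftrightarrow\infty))$, I would first show that for every $u,v\in V$,
\[
\mathbf{Cov}_p(\mathbbm{1}(u\leftrightarrow\infty),\mathbbm{1}(v\leftrightarrow\infty))\leq \bP_p(u\leftrightarrow v\leftrightarrow\infty).
\]
To do this, decompose $\bP_p(u\leftrightarrow\infty,v\leftrightarrow\infty)$ according to whether $u\leftrightarrow v$ or not. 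On the event $\{u\leftrightarrow\infty,v\leftrightarrow\infty,u\nleftrightarrow v\}$ any two infinite self-avoiding open paths from $u$ and $v$ are necessarily vertex-disjoint, so this event is a (limit of) disjoint occurrences of $\{u\leftrightarrow\infty\}$ and $\{v\leftrightarrow\infty\}$, and van den Berg--Kesten then gives that its probability is at most $\bP_p(u\leftrightarrow\infty)\bP_p(v\leftrightarrow\infty)$; subtracting yields the displayed bound.

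Next I would apply the standard tree-graph / BK argument to the remaining term. If $u\leftrightarrow v\leftrightarrow\infty$ then choosing a simple open path from $u$ to $v$ and any infinite self-avoiding open path from $v$ and letting $w$ be the last intersection of these two paths exhibits the event
\[
\{u\leftrightarrow v\leftrightarrow\infty\}\subseteq \bigcup_{w\in V}\{u\leftrightarrow w\}\circ\{w\leftrightarrow v\}\circ\{w\leftrightarrow\infty\},
\]
so by a union bound and the BK inequality,
\[
\bP_p(u\leftrightarrow v\leftrightarrow\infty)\leq \sum_{w\in V}T_p(u,w)T_p(w,v)\bP_p(w\leftrightarrow\infty)\leq \theta^*(p)\,T_p^2(u,v).
\]

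Combining these two bounds gives $\mathbf{Var}_p|A_\infty|\leq \theta^*(p)\sum_{u,v\in A}T_p^2(u,v)=\theta^*(p)\langle \mathbbm{1}_A,T_p^2\mathbbm{1}_A\rangle$, and since $p<p_{2\to 2}$, $T_p$ is a bounded self-adjoint operator on $\ell^2(V)$, so $\langle \mathbbm{1}_A,T_p^2\mathbbm{1}_A\rangle\leq \|T_p\|_{2\to 2}^2\|\mathbbm{1}_A\|_2^2=\|T_p\|_{2\to 2}^2|A|$, which is exactly the desired estimate. The only genuinely nontrivial step is the tree-graph bound on $\bP_p(u\leftrightarrow v\leftrightarrow\infty)$, but this is a routine BK argument, so I do not expect any serious obstacle; one small technical point worth being careful about is that $\{u\leftrightarrow\infty\}$ is not a finite cylinder event, but this is handled in the usual way by approximating it by $\{u\leftrightarrow\partial B(u,n)\}$ and passing to the limit.
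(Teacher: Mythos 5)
Your proof is correct and takes essentially the same route as the paper: the paper also splits the two-point sum according to whether $u\leftrightarrow v$, bounds the disconnected part by BK to cancel against $(\bE_p|A_\infty|)^2$, bounds $\bP_p(u\leftrightarrow v,\,u\leftrightarrow\infty)\leq\theta^*(p)T_p^2(u,v)$ via the same tree-graph BK argument, and finishes with the operator-norm bound $\langle T_p^2\mathbbm{1}_A,\mathbbm{1}_A\rangle\leq\|T_p\|_{2\to2}^2|A|$. The only cosmetic difference is that you phrase it as a covariance bound term-by-term whereas the paper bounds $\bE_p|A_\infty|^2$ directly and introduces the matrix $T_{p,\infty}$ as notation.
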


\begin{proof}[Proof of \cref{lem:Ainfty_variance}]
Consider the matrix $T_{p,\infty}\in [0,1]^{V^2}$ defined by setting
$T_{p,\infty}(u,v):=\bP_p(u \leftrightarrow v \text{ and } u \leftrightarrow \infty)$ for each $u,v \in V$. We claim that
\begin{equation}
\label{eq:Tpinfty}
T_{p,\infty} \opleq \theta^*(p) T_p^2 \qquad \text{ and hence that } \qquad \|T_{p,\infty}\|_{2\to 2} \leq \theta^*(p)\|T_{p}\|_{2\to 2}^2
\end{equation}
for every $0 \leq p \leq 1$, where $
\opleq$ denotes entrywise inequality of matrices. 
Indeed, if $u$ is connected to both $v$ and $\infty$ then there must exist a vertex $w$ (possibly equal to either $u$ or $v$) such that the event $\{u \leftrightarrow w\} \circ \{w \leftrightarrow \infty\} \circ \{w \leftrightarrow v\}$ occurs. Applying the BK inequality, it follows that
\begin{equation}
T_{p,\infty}(u,v) \leq \sum_w \bP_p(u \leftrightarrow w) \bP_p(w \leftrightarrow \infty) \bP_p(w \leftrightarrow v),
\end{equation}
which clearly implies the claimed inequality \eqref{eq:Tpinfty}.
 % It follows in particular that $\|T_{p,\infty}\|_{2\to 2} \leq \theta^*(p) \|T_p\|_{2\to 2}^2$ for every $0\leq p \leq 1$.
We deduce that
\begin{align}
\bE_p|A_\infty|^2 &= \sum_{u,v \in A} \bP_p(u \leftrightarrow \infty, v \leftrightarrow \infty, u \nleftrightarrow v) + \sum_{u,v\in A} \bP_p(u \leftrightarrow \infty, v \leftrightarrow \infty, u \leftrightarrow v)
\nonumber\\
&\leq 
 \sum_{u,v \in A} \bP_p(u \leftrightarrow \infty) \bP_p(v \leftrightarrow \infty) + \sum_{u,v\in A} \bP_p(u \leftrightarrow \infty, u \leftrightarrow v)
 \nonumber\\
 &= \left(\bE_p|A_\infty|\right)^2 + \langle T_{p,\infty} \mathbbm{1}_A, \mathbbm{1}_A \rangle \leq \left(\bE_p|A_\infty|\right)^2 + \theta^*(p)\|T_p\|^2_{2\to 2} |A|
\end{align}
as required, where the inequality in the second line follows from the BK inequality.
\end{proof}

Given a set of vertices $K$ in $G$, we write $\partial^\omega_E K = \{ e \in \partial_E K : \omega(e)=1\}$ for the set of open edges belonging to the edge boundary of $K$. 

\begin{proof}[Proof of \cref{lem:Cheeger_upper}]
Let $\alpha$ be a constant to be chosen, let $p_0 = (p_c+p_{2\to 2})/2$, and fix $v\in V$. Since the inequality $\Phi^*(K)\leq 1$ holds vacuously, it suffices to prove the claim for $p_c<p \leq p_0$.
Fix one such $p_c < p \leq p_0$ and a vertex $v$ of $G$. By \cref{prop:growth_estimate,thm:KestenStigum} there exists a constant $C_1$ such that
\[
\prod_{i=0}^{r-1} \left(1+\frac{|\partial B_\mathrm{int}(v,i+1)|}{|B_\mathrm{int}(v,i)|}\right) = |B_\mathrm{int}(v,r)| \leq e^{C_1(p-p_c) r}
\]
for all sufficiently large $r$ almost surely. (Note that we are only using the easy parts of \cref{prop:growth_estimate,thm:KestenStigum} to reach this conclusion.)
%  and it follows that the inequality
% \[
%   |\partial B_\mathrm{int}(v,r)| \leq \left(e^{C_1 (p-p_c)}-1\right) |B_\mathrm{int}(v,r-1)|
% \]
% holds for infinitely many $r$ $\bP_p$-almost surely on the event that $v$ is in an infinite cluster.
 Rearranging, this implies that there exists a constant $C_2$ such that
\[
 \liminf_{r\to \infty} \frac{|\partial B_\mathrm{int}(v,r)|}{ |B_\mathrm{int}(v,r)|} \leq \frac{e^{C_1(p-p_c)}-1}{e^{C_1(p-p_c)}} \leq C_2(p-p_c)
\]
almost surely on the event that $v$ is in an infinite cluster. 
 % It follows that there exist positive constants $C_2$ and $\delta_1$ such that

% \medskip

 We now perform a breadth-first search of the cluster of $v$: At stage $0$ we expose the value of every edge touching $v$. At each subsequent stage $i\geq 1$ we expose the value of those edges that touch $\partial B_\mathrm{int}(v,i-1)$ and have not yet been exposed, stopping if and when $\partial B_\mathrm{int}(v,i)=\emptyset$. Let $T_j$ be the $j$th time that $|\partial B_\mathrm{int}(v,r)| \leq 2C_2 (p-p_c) |B_\mathrm{int}(v,r)|$, so that $j \leq T_j<\infty$ for every $j\geq 1$ almost surely on the event that $v$ is in an infinite cluster. Let $\cF_i$ be the $\sigma$-algebra generated by the exploration up to time $i$, and let $\cF_{T_j}$ be the stopped $\sigma$-algebra generated by by the exploration up to time $T_j$. For each $i \geq 1$, let $X_i$ be the number of vertices of $B_\mathrm{int}(v,i)$ that are connected to infinity off of $B_\mathrm{int}(v,i)$, and note that any such vertex must belong to $\partial B_\mathrm{int}(v,i)$. Conditional on $T_j<\infty$ and on the stopped $\sigma$-algebra $\cF_{T_j}$, the expectation $\bE_p[X_{T_j} \mid \cF_{T_j}]$ is at most $\theta^*(p) |\partial B_\mathrm{int}(v,T_j)| \leq C_3 (p-p_c)^2 |B_\mathrm{int}(v,T_j)|$ for some constant $C_3$. By \cref{lem:Ainfty_variance} (applied to the subgraph of $G$ spanned by those edges that have not yet been queried by stage $T_j$), the conditional \emph{variance} of $X_{T_j}$ is at most $C_4 (p-p_c)^2 |B_\mathrm{int}(v,T_j)|$ for some constant $C_4$. It follows by Chebyshev's inequality that there exist positive constants $C_5$ and $C_6$ such that
 \[
\bP_p\left( X_{T_j} \geq C_5 (p-p_c)^2|B_\mathrm{int}(v,T_j)| \mid \cF_{T_j}\right) \leq \frac{C_6 \mathbbm{1}(T_j<\infty)}{(p-p_c)^2|B_\mathrm{int}(v,T_j)|}.
% \leq \frac{C_6 \mathbbm{1}(T_j<\infty)}{(p-p_c)^2 j}.
 \]
Since the right hand side tends to zero as $j\to\infty$, it follows by Fatou's lemma that
\[
\liminf_{i\to\infty} \frac{X_{i}}{|B_\mathrm{int}(v,i)|} \leq \liminf_{j\to\infty} \frac{X_{T_j}}{|B_\mathrm{int}(v,T_j)|} \leq C_5(p-p_c)^2
\]
almost surely on the event that $v$ is in an infinite cluster. Let $\operatorname{Hull}(v,i) \supseteq B_\mathrm{int}(v,i)$ be the set of all vertices $u$ in the cluster of $v$ such that any path from $u$ to $\infty$ in $K_v$ must pass through $B_\mathrm{int}(v,i)$. Then we have that $|\partial_E^\omega \operatorname{Hull}(v,i)| \leq M X_i$, where $M$ is the maximum degree of $G$, so that
\[
\liminf_{i\to\infty}\frac{|\partial_E^\omega \operatorname{Hull}(v,i)|}{|\operatorname{Hull}(v,i)|} \leq \liminf_{i\to\infty} \frac{M X_{i}}{|B_\mathrm{int}(v,i)|} \leq M C_5  (p-p_c)^2
\]
almost surely on the event that $v$ is in an infinite cluster. The claim follows since $v$ was arbitrary.
\end{proof}

% \begin{remark} This proof shows more generally that, under the hypotheses of \cref{lem:Cheeger_upper}, there exists a constant $C$ such that
% \[\Phi^*(K_v) \leq C(p-p_c)  \liminf_{r\to \infty} \frac{|\partial B_\mathrm{int}(v,r)|}{ |B_\mathrm{int}(v,r)|}  \]
% almost surely.
% \end{remark}

Our final goal is to apply \cite[Theorem 1.1]{hutchcroft2020slightly} and \cite[Proposition 3.2]{HermonHutchcroftSupercritical} to complete the proof of \cref{thm:Cheeger}. The case of the inequality in which $p$ is very close to $1$ will require the following estimate on the exponential decay rate
\[
\zeta(p) := \liminf_{n\to\infty} -\frac{1}{n}\sup_{v\in V} \log \bP_p(n\leq |K_v| < \infty),
\] 
which is adapted from \cite[Theorem 2]{bperc96}.

\begin{lemma}
\label{lem:Benjamini_Schramm}
Let $G$ be a nonamenable locally finite graph with Cheeger constant $\Phi(G)>0$. Then
\begin{align}
\zeta(p) \geq \Phi(G) \log \frac{\Phi(G)}{1-p} + (1-\Phi(G)) \log \frac{\Phi(G)}{p} 
% \\&= \Phi(G) \log\frac{\Phi(G)}{1-p} + o\left(\Phi(G) \log\frac{\Phi(G)}{1-p}\right) \qquad \text{ as $p\uparrow 1$}
\end{align}
for every $0<p<1$.
\end{lemma}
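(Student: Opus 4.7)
The plan is to prove the lemma by adapting the Peierls-type argument of Benjamini and Schramm \cite[Theorem 2]{bperc96} to the quantitative form stated here. Fix $v\in V$ and $n\geq 1$, and begin by union-bounding over the shape of the cluster:
\[
\bP_p\bigl(|K_v|=n,\, K_v\text{ finite}\bigr) \;\leq\; \sum_{W\ni v,\,|W|=n,\,W\text{ connected}} \bP_p(K_v=W).
\]
Since $\{K_v=W\}$ forces every edge of $\partial_E W$ to be closed, and the states of edges in $\partial_E W$ are independent of those in $E(W)$, each term factorises as
\[
\bP_p(K_v=W) \;=\; (1-p)^{|\partial_E W|}\,\bP_p^{E(W)}\!\bigl(W\text{ is connected in the open subgraph of }E(W)\bigr).
\]
The Cheeger inequality $|\partial_E W|\geq \Phi(G)\sum_{w\in W}\deg(w)\geq \Phi(G)\,n$ provides the required lower bound on the boundary size.

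Next I would pass to an exploration-process representation: perform a canonical breadth-first search of $K_v$ that reveals each incident edge exactly once, so that the revealed edges form an i.i.d.\ Bernoulli$(p)$ sequence of (random) length $N=|E(K_v)|+|\partial_E K_v|$. On the event $\{|K_v|=n,\,K_v\text{ finite}\}$ the Cheeger inequality ensures that the fraction of closed outcomes among the $N$ revealed edges is at least of order $\Phi$, while the remaining (open) outcomes account for the edges needed to keep $K_v$ connected. Replacing the sum over $W$ by a sum over exploration histories and applying a Chernoff/Stirling-type estimate to the associated binomial coefficient $\binom{N}{\lceil\Phi N\rceil}$ produces precisely the two terms $\Phi\log(\Phi/(1-p))$ (coming from the forced closed edges on $\partial_E W$) and $(1-\Phi)\log(\Phi/p)$ (coming from the forced open interior edges that keep the cluster connected) in the exponential rate. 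Taking $-\tfrac{1}{n}\log$ and then $\liminf$ as $n\to\infty$ yields the claimed lower bound on $\zeta(p)$.

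The main obstacle is the cluster enumeration: a naive animal-counting bound would contribute a spurious factor $\Delta^n$ to the exponent (with $\Delta$ the maximum degree), replacing the desired $\log\Phi$ by $-\log(e\Delta)$ and thus spoiling the form of the inequality. The key point, which is the reason for routing the argument through the exploration process rather than the direct Peierls bound $\sum_W(1-p)^{|\partial_E W|}$, is that the Cheeger constraint forces at least a $\Phi$-fraction of the exploration outcomes to be closed, which tames the enumeration: the count of consistent histories is governed by a binomial coefficient of the form $\binom{N}{\Phi N}\lesssim \Phi^{-\Phi N}(1-\Phi)^{-(1-\Phi)N}$ and hence produces $\log\Phi$ rather than $\log\Delta$ in the final rate. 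Carefully keeping track of the trade-off between boundary and interior contributions across this count is what yields the exact asymmetric expression $\Phi\log(\Phi/(1-p))+(1-\Phi)\log(\Phi/p)$ and completes the proof.
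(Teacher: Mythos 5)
Your proposal is correct and follows essentially the same route as the paper: couple the BFS exploration of $K_v$ to an i.i.d.\ Bernoulli$(p)$ sequence, use the Cheeger inequality to conclude that on $\{|E(K_v)|=n\}$ at least a $\Phi$-fraction of the $n$ revealed edges are closed, and extract the rate from the resulting binomial large-deviation event. The paper states this last step as an invocation of Cram\'er's theorem, whereas you unpack it via $\binom{n}{\lceil\Phi n\rceil}$ and Stirling, but this is the same computation; your initial display summing over cluster shapes is a harmless rephrasing that you immediately (and correctly) abandon in favour of the exploration coupling.
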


Note that this bound is only positive for $p > 1-\Phi(G)$.

\begin{proof}[Proof of \cref{lem:Benjamini_Schramm}]
Let $(X_i)_{i\geq 1}$ be an i.i.d.\ sequence of Beroulli-$p$ random variables and let $v$ be a vertex of $G$. We can couple percolation on $G$ with the sequence $X_i$ so that the cluster of $v$ touches $\sum_{i=1}^n X_i$ open edges and $n-\sum_{i=1}^n X_i$ closed edges on the event that $|E(K_v)|=n$. The number of closed edges in the boundary of $K_v$ must be at least $\Phi(G) |E(K_v)|$, and it follows that
\begin{align}
\zeta(p)=\limsup_{n\to\infty} -\frac{1}{n} \log \bP_p(|E(K_v)|=n) &\geq \limsup_{n\to\infty} -\frac{1}{n} \log\P\left(\sum_{i=1}^n X_i \leq (1-\Phi(G)) n \right) 
\nonumber\\
&= \Phi(G) \log \frac{\Phi(G)}{1-p} + (1-\Phi(G)) \log \frac{\Phi(G)}{p} 
% \\&= \Phi(G) \log\frac{\Phi(G)}{1-p} + o\left(\Phi(G) \log\frac{\Phi(G)}{1-p}\right) \qquad \text{ as $p\uparrow 1$}
\end{align}
for every $0 < p < 1$, where the second line follows by standard large deviations theory (i.e., Cram\'er's theorem). 
\end{proof}

\begin{proof}[Proof of \cref{thm:Cheeger}] 
% Since the right hand side of \eqref{eq:Cheeger_log_quant} tends to zero as $N \to \infty$, 
% \cref{prop:Cheeger_log_quant} implies that there exist positive constants $\delta$ and $c$ such that 
% if $p_c < p \leq p_c+\delta$ then every infinite cluster $K$ in Bernoulli-$p$ bond percolation on $G$ has 
% \[
% \Phi^*(K) \geq \frac{c(p-p_c)^2}{-\log (p-p_c)}
% \]
% $\bP_p$-almost surely. To conclude the proof, it suffices to prove that there exists a positive constant $c'$ such that if $p_c+\delta \leq p \leq 1$ then 
% then every infinite cluster $K$ in Bernoulli-$p$ bond percolation on $G$ has 
% $\Phi^*(K) \geq c'$
% $\bP_p$-almost surely.

The upper bound is immediate from \cref{lem:Cheeger_upper}. On the other hand, \cite[Proposition 3.2]{HermonHutchcroftSupercritical}, which is based on an argument of Pete \cite[Theorem A.1]{chen2004anchored}, states that if $p>p_c$ then every infinite cluster $K$ of Bernoulli-$p$ bond percolation on $G$ has anchored expansion with anchored cheeger constant
\begin{equation}
\label{eq:zetatocheeger}
\Phi^*(K) \geq  \frac{1}{2} \sup\left\{ \alpha \in [0,p] : \alpha^{-\alpha} (1-\alpha)^{-(1-\alpha)} \left[\frac{p}{1-p}\right]^{\alpha} < e^{\zeta(p)} \right\} 
% = \frac{\zeta(p)}{2 \log \zeta(p)} + o\left(\frac{\zeta(p)}{2 \log \zeta(p)}\right)
\end{equation}
almost surely,
where 
\[
\zeta(p) := \liminf_{n\to\infty} -\frac{1}{n}\sup_{v\in V} \log \bP_p(n\leq |K_v| < \infty)
\] 
for each $p\in [0,1]$.
It follows from \cite[Theorem 1.1 and Corollary 1.3]{hutchcroft2020slightly} that there exist positive constants $c$ and $\delta$ such that $\zeta(p) \geq c(p-p_c)^2$ for every $p_c < p \leq p_c+\delta$. Meanwhile, the main result of \cite{HermonHutchcroftSupercritical} states that $\zeta(p)>0$ for every $p_c < p \leq 1$, and it follows by continuity of $\zeta$ (see e.g. \cite[Theorem 10.1]{georgakopoulos2019exponential}) that there exists a constant $c_2$ such that $\zeta(p) \geq c_2$ for every $p_c+\delta \leq p \leq 1$. 
Putting these estimates together with \cref{lem:Benjamini_Schramm}, we deduce that there exists a positive constant $c_1$ such that
\begin{equation}
\zeta(p) \geq c_1 (p-p_c)^2 \log \frac{1}{1-p}
\end{equation}
for every $p_c < p < 1$. The claim follows from this and \eqref{eq:zetatocheeger} by direct calculation, since if $\alpha = c_2 (p-p_c)^2/\log 1/(p-p_c)$ for a sufficiently small positive constant $c_2$ then $\alpha \leq p$ and
\[
\alpha^{-\alpha} (1-\alpha)^{-(1-\alpha)} \left[\frac{p}{1-p}\right]^{\alpha} < e^{c_1 (p-p_c)^2 \log \frac{1}{1-p}}
\]
for every $p_c<p<1$.
% : For small $\alpha$ and $p$ bounded away from $0$ and $1$ the $\alpha^{-\alpha}$ term dominates the expression appearing on the left hand side of the constraint defining the set in \eqref{eq:zetatocheeger}, so that when $\zeta(p)$ is small the bound on $\Phi^*$ provided by \eqref{eq:zetatocheeger} is of order $\zeta(p)/\log[1/\zeta(p)]$. Meanwhile, the lower bound We omit the details.
\end{proof}

\section{Open problems}

Let us end the paper with some natural questions raised by our work. Some of these questions are similar in spirit to those raised by Benjamini, Lyons and Schramm in their 1999 work \cite{BLS99}, many of which remain open.
% It is natural to wonder whether this theorem can be strengthened as follows:

% \begin{question}
% Let $G$ be an amenable transitive graph of exponential volume growth. Does the infinite cluster of $G$ have exponential volume growth for every $p>p_c$?
% \end{question}

\begin{question}
Let $G$ be a nonamenable Cayley graph with $p_c<p_{2\to2}$ and for which the volume of $G$ has unbounded subexponential corrections to growth, such as $G = T \times \Z^d$. At which values of $p$ do the infinite clusters of $G$ have unbounded subexponential corrections to growth? Is the growth of clusters always either pure exponential or of the same form as $G$? If a transition from one behaviour to the other occurs, does it do so at $p_{2\to2}$, $p_u$, or some other point?
\end{question}

\begin{question}
Under the hypotheses of \cref{thm:KestenStigum}, do we have that 
\[
0<\liminf_{r\to\infty} \frac{|\partial B_\mathrm{int}(v,r)|}{\bE_p|\partial B_\mathrm{int}(v,r)|}\leq \limsup_{r\to\infty} \frac{|\partial B_\mathrm{int}(v,r)|}{\bE_p|\partial B_\mathrm{int}(v,r)|} < \infty
\]
almost surely on the event that $v$ belongs to an infinite cluster?
\end{question}

\begin{question}
Can the $1/\log1/(p-p_c)$ factor be removed from the lower bound of \cref{thm:Cheeger}?
\end{question}

% \newpage

% \part{Isoperimetry}
% \label{sec:isoperimetry_first_look}

% In this section we begin our study of the isoperimetry of infinite clusters in the slightly supercritical regime. We begin by proving our theorem concerning the anchored Cheeger constant, \cref{thm:Cheeger}, in \cref{subsec:anchored_Cheeger}, before proving more refined lower-tail estimates on the anchored isoperimetric profile in \cref{subsec:isoperimetric_profile_lower_1}. These lower-tail estimates, and their subsequent refinement in [ref], will play an important role in our computation of the effective resistance to infinity and hence of the speed of the random walk. [Revisit]

\subsection*{Acknowledgments} This work was carried out in part while the author was a Senior Research Associate at the University of Cambridge, during which time he was supported by ERC  starting grant 804166 (SPRS).

\addcontentsline{toc}{section}{References}

 \setstretch{1}
  \bibliographystyle{abbrv}
  \bibliography{unimodularthesis.bib}
\end{document}